\newcommand{\Nn}{\mathbb{N}}
\newcommand{\Zz}{\mathbb{Z}}
\newcommand{\Qq}{\mathbb{Q}}
\newcommand{\Ff}{\mathbb{F}}
\newcommand{\Aa}{\mathbb{A}}
\renewcommand {\leq}{\leqslant}
\renewcommand {\geq}{\geqslant}
\renewcommand {\le}{\leqslant}
\renewcommand {\ge}{\geqslant}
\renewcommand{\ast}{\star}
\theoremstyle{plain}
\newtheorem{theorem}{Theorem}[section]    
\newtheorem{lemma}[theorem]{Lemma}       
\newtheorem{proposition}[theorem]{Proposition}      
\newtheorem*{conjecture*}{Conjecture}
\newtheorem*{theorem*}{Theorem}
\theoremstyle{remark}
\newtheorem{definition}[theorem]{Definition}      
\newtheorem*{remark*}{Remark}  
\newtheorem{remark}[theorem]{Remark}   
\newtheorem{example}[theorem]{Example}
\def\CopSch{${\mathcal CopSch}(1,1)$}
\title[The Hilbert-Schinzel specialization property]{The Hilbert-Schinzel specialization property}
\author{Arnaud Bodin}
\author{Pierre D\`ebes}
\author{Joachim K\"onig}
\author{Salah Najib}
\email{arnaud.bodin@univ-lille.fr}
\email{pierre.debes@univ-lille.fr}
\email{jkoenig@knue.ac.kr}
\email{slhnajib@gmail.com}
\address{Universit\'e de Lille, CNRS, UMR 8524, Laboratoire Paul Painlev\'e, F-59000 Lille, France}
\address{Universit\'e de Lille, CNRS, UMR 8524, Laboratoire Paul Painlev\'e, F-59000 Lille, France}
\address{Department of Mathematics Education, Korea National University of Education, 28173 Cheongju, South Korea}
\address{Equipe ETRES, Facult\'e Polydisciplinaire de Khouribga, Universit\'e Sultan Moulay Slimane, BP 145, Hay Ezzaytoune, 25000 Khouribga, Maroc.}
\subjclass[2010] {Primary 12E05, 12E30; Sec. 13Fxx, 11A05, 11A41}
\keywords{Primes, Polynomials, Schinzel Hypothesis, Hilbert's Irreducibility Theorem.}
\thanks{{\it Acknowledgment}. 
The first and second authors were supported by the Labex CEMPI  (ANR-11-LABX-0007-01).
The first author was also supported by the ANR project ``LISA'' (ANR-17-CE40--0023-01), and
thanks the University of British Columbia for his visit at PIMS}
\date{\today}
\begin{document}

\begin{abstract} 
We establish a version ``over the ring'' of the celebrated Hilbert Irreducibility Theorem. Given finitely many polynomials in $k+n$ variables, 
with coefficients in $\Zz$, of positive degree in the last $n$ variables, we show that if they are irreducible over $\Zz$ and satisfy a necessary  ``Schinzel condition'', then the first $k$ variables can be specialized in a Zariski-dense subset of $\Zz^k$ in such a way that irreducibility over $\Zz$ is preserved 
for the polynomials in the remaining $n$ vari\-ables. The Schinzel condition, which comes from the Schinzel Hypothesis, is that, when specializing the first $k$ variables in $\Zz^k$, the product of the polynomials should not always be divisible by some common prime number. Our result also improves on a ``coprime'' version of the Schinzel Hypothesis: under some Schinzel condition, coprime polynomials 
assume coprime values.
We prove our results over many other rings than $\Zz$, \hbox{e.g.} \hbox{UFDs and Dedekind domains.}
\end{abstract}

\maketitle


\section{Introduction} \label{sec:intro}

This paper is about specialization properties of polynomials $P(\underline t,\underline y)$ with coefficients in an integral domain $Z$. The $k+n$ variables from the two tuples ${\underline t}=(t_1,\ldots,t_k)$ and ${\underline y} = (y_1,\ldots,y_n)$ ($k,n\geq 1$) are of two types; the $t_i$ are those to be specialized, unlike the $y_i$. The next statement introduces both a central property and a main result of the paper. 

Say that a non-unit $a\in Z$, $a\not=0$, is {\it a fixed divisor of $P$
\hbox{w.r.t.} $\underline t$} if $P(\underline m,\underline y) \equiv 0 \pmod{a}$ for every $\underline m\in Z^k$, and denote the set of all fixed divisors by ${\mathcal F}_{\underline t}(P)$.

\begin{theorem} \label{thm:hilbert-schinzel-main-examples}
Let $Z$ be the ring of integers of a number field of class number $1$ or any polynomial ring $R[x_1,\ldots,x_r]$ ($r\geq 1$) over 
a UFD $R$ \footnote{As usual, UFD stands for Unique Factorization Domain and PID for Principal Ideal Domain.
}.
 Then the ring $Z$ has the {\rm Hilbert-Schinzel specialization property}, for any integers $k,n,s\geq 1$; \hbox{i.e.} the following holds:

\noindent
Let $P_1(\underline t,\underline y), \ldots, P_s(\underline t,\underline y)$ be $s$ polynomials, irreducible in $Z[\underline t,\underline y]$, of degree $\geq 1$ in $\underline y$.  Assume that the product $P_1\cdots P_s$ has no fixed divisor in $Z$ \hbox{w.r.t.} $\underline t$.
Then there is a Zariski-dense subset $H\subset Z^k$ such that $P_1(\underline m,\underline y),\ldots, P_s(\underline m,\underline y)$ are irreducible in $Z\hskip 1pt [\underline y\hskip 1pt]$ for every $\underline m\in H$. 
\end{theorem}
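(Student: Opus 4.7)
The plan is to combine a classical Hilbert Irreducibility Theorem (HIT) over the fraction field $K$ of $Z$ with a ``coprime Schinzel'' content statement. Writing $P_i = \sum_\alpha c_{i,\alpha}(\underline t)\, \underline y^\alpha$, the irreducibility of $P_i$ in $Z[\underline t,\underline y]$, together with positive $\underline y$-degree, forces $P_i$ to be primitive as an element of $(Z[\underline t])[\underline y]$; hence the coefficients $c_{i,\alpha}$, as $\alpha$ varies, are coprime in $Z[\underline t]$. By Gauss's lemma, $P_i(\underline m,\underline y)$ is irreducible in $Z[\underline y]$ precisely when (a)~it is irreducible in $K[\underline y]$ and (b)~its $Z$-content $\gcd_\alpha c_{i,\alpha}(\underline m)$ is a unit. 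I would secure (a) and (b) separately and intersect the resulting sets of specializations.

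For (a), each ring $Z$ in the statement has Hilbertian fraction field, so classical HIT delivers a Hilbert subset $H_0 \subseteq Z^k$---Zariski-dense and stable under finite intersection---on which every $P_i(\underline m,\underline y)$ is irreducible in $K[\underline y]$. For (b), I would first translate the hypothesis: ``$P_1\cdots P_s$ has no fixed divisor w.r.t.\ $\underline t$'' is equivalent to the statement that for every prime $p$ of $Z$ there exists $\underline m \in Z^k$ with $p \nmid c_{i,\alpha}(\underline m)$ for all $i,\alpha$ simultaneously. In particular, for each fixed $i$ the family $(c_{i,\alpha})_\alpha$ admits no common prime fixed divisor, which is exactly the hypothesis needed by a standalone ``coprime Schinzel'' theorem---to be proved separately---asserting that coprime polynomials in $Z[\underline t]$ without common prime fixed divisor take coprime values on a Zariski-dense set of specializations. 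Applying it to each $i$ yields a Zariski-dense set $H_1$ on which every $P_i(\underline m,\underline y)$ is primitive in $Z[\underline y]$; then $H = H_0 \cap H_1$ answers the theorem, provided the coprime Schinzel set $H_1$ is produced in a form compatible with the Hilbert-subset structure---for instance as a Hilbert subset further constrained by residue conditions modulo finitely many primes.

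The real work, and the main obstacle, lies in the coprime Schinzel assertion. I would prove it by splitting primes of $Z$ into two regimes. For all but finitely many primes $p$, the common-zero locus of the reductions $\overline{c}_{i,\alpha}$ modulo $p$ cuts out a proper subvariety of $(Z/p)^k$, so Zariski-generic specializations avoid it; care is needed because the residue field $Z/p$ may be small, and rigorous ``generic avoidance'' requires a Noether-normalization or counting argument in concert with a Schwartz--Zippel-type input. For the finitely many exceptional primes, the no-common-prime-divisor hypothesis supplies, for each such $p$, a residue class in $(Z/p)^k$ avoiding the common zero locus; the Chinese Remainder Theorem then assembles these residue classes into a single admissible class modulo the product of exceptional primes. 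Meshing the ``generic-avoidance'' mechanism for large primes with the ``CRT-residue-class'' mechanism for small primes, all while remaining inside the Hilbert subset $H_0$, is the delicate step, and it governs the precise shape of the Zariski-dense set $H$ in the conclusion.
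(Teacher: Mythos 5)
Your high-level decomposition is the right one and matches the paper's: reduce irreducibility in $Z[\underline y]$ to (a) irreducibility in $Q[\underline y]$ plus (b) primitivity of $P_i(\underline m,\underline y)$ w.r.t.\ $Z$, handle (a) by Hilbertianity and (b) by a coprime-Schinzel statement. But the two steps you yourself flag as ``delicate'' are precisely where the proof lives, and neither is resolved. First, your treatment of the ``large primes'' in the coprime Schinzel assertion does not close: for each large prime $p$ the bad locus mod $p$ is a proper subvariety, but a single $\underline m$ must avoid infinitely many such loci simultaneously, and a union of infinitely many proper subvarieties (one per prime) can cover all of $Z^k$; a Schwartz--Zippel count per prime does not assemble into a global statement without an Ekedahl--Poonen-type sieve. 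The paper avoids this entirely: coprimality of the $c_{i,\alpha}$ in $Q[\underline t]$ gives a B\'ezout relation $\sum_\alpha V_\alpha c_{i,\alpha}=\delta_i$ with $0\neq\delta_i\in Z$ (Lemma \ref{lemma:1}), so \emph{every} common divisor of the specialized coefficients divides $\delta_i$; only the finitely many primes dividing $\delta_1\cdots\delta_s$ can ever obstruct primitivity, for \emph{any} $\underline m$, and those are handled by a CRT-type argument (Lemma \ref{lemma:3}). There is no large-prime regime to control.

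Second, the intersection $H_0\cap H_1$ is not justified: two Zariski-dense subsets of $Z^k$ can be disjoint, and a Hilbert subset need not meet a prescribed union of residue classes unless you arrange it to. The paper's resolution is an ordering of the steps that you do not carry out: the same B\'ezout relation shows the primitivity conclusion holds on an entire arithmetic progression $(\alpha+\omega\ell)_{\ell\in Z}$ (periodicity in Lemma \ref{lemma:1} and Remark \ref{rem:plus2}), and one then applies Hilbertianity to the \emph{reparametrized} polynomials $P_i(\alpha+\omega t,\underline y)$, so that the Hilbert subset is produced inside the progression rather than intersected with it afterwards. Finally, for $k\geq 2$ this reparametrization only works one variable at a time, which is why the paper runs an induction on the variables $t_1,\ldots,t_k$ (Lemmas \ref{lem:reduction1} and \ref{lem:reduction}), re-establishing at each step that the specialized product has no fixed prime divisor in the remaining variables; your proposal does not address the multivariable case at all. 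In short: right skeleton, but the two mechanisms that make it work --- the B\'ezout parameter $\delta$ and the progression-then-Hilbert ordering --- are missing.
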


\begin{remark}
The {fixed divisor} assumption ${\mathcal F}_{\underline t}(P_1\cdots P_s)=\emptyset$ is necessary, and may fail. For example, the polynomial $P = (t^p-t)y+ (t^p-t+p)$, with $p$ a prime number, is irreducible in $\Zz[t,y]$; and $p\in {\mathcal F}_{t}(P)$,
since $p$ divides $(m^p-m)$ for every $m\in \Zz$.  A similar example occurs with $Z=\Ff_q[u]$. Take $P= (t^q-t+u)y+ (t^q-t)^2+u$. 
For every $m(u)\in \Ff_q[u]$, the constant term of $m(u)^q - m(u)$ is zero, so $P(m(u),y)$ is divisible by $u$. 
\end{remark}

The name ``Schinzel'' in our specialization property refers to the Schinzel Hypothesis \cite{schinzel-sierpinski}, which corresponds to the case ($k=1$, $n=0$, $Z=\Zz$): if $P_1(t),\ldots,P_s(t)$ are irreducible in $\Zz[t]$ and the product has no fixed prime divisor, then $P_1(m),\ldots,P_s(m)$ are prime numbers for infinitely many $m\in \Zz$.
This statement implies many famous conjectures in number theory, like the Twin Prime conjecture (for $P_1(t)=t$ and $P_2(t)=t+2$). It is however still out of reach; the case $n=0$ is excluded in Theorem \ref{thm:hilbert-schinzel-main-examples}. 

Another special case of interest is when $Z=\Zz$ and 
each polynomial $P_i$ is of the form $P_i = P_{i1}(\underline t) y_1 +\cdots + P_{i\ell}(\underline t)y_\ell$. Theorem \ref{thm:hilbert-schinzel-main-examples} then concludes, under the corresponding assumptions, that for every $\underline m$ in some Zariski-dense subset of $\Zz^k$, the values $P_{i1}(\underline m),\ldots, P_{i\ell}(\underline m)$ are {\it coprime}\footnote{Elements from an integral domain are {\it coprime} if they have no common divisor other than units.}, for each $i=1,\ldots,s$. This was proved by Schinzel \cite{schinzel2002}; see also \cite{ekedahl91} and \cite{poonen2003} for the special case $s=1$, $\ell=2$ but with a positive density result for the good $\underline m$.

This coprime conclusion is interesting for its own sake.
Theorem \ref{thm:hilbert-schinzel-main-examples} already carries it over to more general rings than $\Zz$. We show that it holds on even 
more rings. For simplicity, we restrict below to the situation that {\it one} set of polynomials $P_j(t)$ in {\it one} variable is given, and refer to Theorem 
\ref{thm:main_theorem_gen} for the general version.

\begin{theorem} \label{thm:main_theorem} Assume that $Z$ is a UFD or a Dedekind domain.
Let $Q$ be the fraction field of $Z$.
Then the {\rm coprime Schinzel Hypothesis} 
holds for $Z$, \hbox{i.e.} the following is true: Let $P_1(t),\ldots,P_\ell(t) \in Z[t]$ be $\ell\geq 2$ nonzero po\-ly\-no\-mials, coprime in 
$Q[t]$ and such that: 
\vskip 1,5mm

\noindent
{\rm (AV)} {\it  no non-unit of $Z$ divides all values $P_1(z),\ldots, P_\ell(z)$ with $z\in Z$}.
\vskip 1,5mm

\noindent
Then there exists an element $m\in Z$ such that $P_1(m)$,\ldots,$P_\ell(m)$ are coprime in $Z$.
 \end{theorem}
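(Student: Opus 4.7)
The strategy is to reduce the problem to avoiding a finite list of ``bad'' primes of $Z$ and then to combine the local conditions into a single global $m$.

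Since $P_1,\ldots,P_\ell$ are coprime in the PID $Q[t]$, B\'ezout gives $A_j\in Q[t]$ with $\sum_j A_jP_j=1$; clearing denominators yields $B_j\in Z[t]$ and a nonzero $d\in Z$ with $\sum_j B_jP_j=d$. Specializing $t\mapsto m$ shows $\gcd(P_1(m),\ldots,P_\ell(m))\mid d$ for every $m\in Z$, so the only potential common divisors of the specialized values are the (finitely many) prime divisors of $d$, say $p_1,\ldots,p_N$ (in the Dedekind case, the primes appearing in $(d)$). For each $j$, hypothesis (AV) applied to the non-unit $p_j$ supplies $z_j\in Z$ and an index $i_j$ with $P_{i_j}(z_j)\not\equiv 0\pmod{p_j}$. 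I then look for a single $m\in Z$ satisfying $m\equiv z_j\pmod{p_j}$ for every $j$: this forces $P_{i_j}(m)\not\equiv 0\pmod{p_j}$ for all $j$, and the gcd, dividing $d$ while being coprime to every $p_j$, must be a unit.

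In the \emph{Dedekind case} this last step is immediate: distinct nonzero primes of a Dedekind domain are maximal, hence pairwise coprime, and the Chinese Remainder Theorem directly produces the required $m$.

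The \emph{UFD case} is the main obstacle, because distinct principal primes in a UFD may generate non-coprime ideals (for example $(2)+(x)\subsetneq\Zz[x]$), so CRT in $Z$ itself can fail. My plan here is a Neumann-type covering argument. The ``bad set'' $B_j=\{m\in Z:p_j\mid P_i(m)\text{ for all }i\}$ is a finite union of cosets of $(p_j)$ in the additive group $(Z,+)$: by (AV) at least one $\bar P_i$ is a nonzero polynomial over the integral domain $Z/(p_j)$, hence has finitely many roots, so the common zero locus of the $\bar P_i$'s in $Z/(p_j)$ is finite. If $Z$ is a PID, ordinary CRT suffices. Otherwise $Z$ has Krull dimension at least $2$, every principal prime $(p_j)$ is strictly contained in a maximal ideal, so $Z/(p_j)$ is an infinite integral domain and $(p_j)$ has infinite index in $(Z,+)$; B.\,H.\,Neumann's covering theorem then guarantees that a finite union of cosets of subgroups of infinite index cannot cover $Z$, producing the desired $m\in Z\setminus\bigcup_j B_j$.
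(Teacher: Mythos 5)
Your plan for the \emph{Dedekind} case has a genuine gap. You want, for each prime $\mathfrak{p}_j$ occurring in $(d)$, an element $z_j$ with some $P_{i_j}(z_j)\not\equiv 0\pmod{\mathfrak{p}_j}$, extracted from (AV). But (AV) is a statement about non-unit \emph{elements}, while in a non-PID Dedekind domain the relevant primes are \emph{ideals} that need not be principal, so ``(AV) applied to the non-unit $p_j$'' has no meaning there; worse, it can happen that \emph{every} value $P_i(z)$ lies in a common non-principal prime ideal while (AV) holds. For instance over $Z=\Zz[\sqrt{-5}]$, take $P_1=2$ and $P_2=1+\sqrt{-5}$: these are coprime in $Q[t]$, no non-unit of $Z$ divides both (their ideal gcd is the non-principal prime $\mathfrak{p}=(2,1+\sqrt{-5})$), so (AV) holds, yet all values lie in $\mathfrak{p}$ and no choice of $m$ can make them nonzero mod $\mathfrak{p}$. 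Your CRT step therefore has nothing to glue, and ``coprime to every $\mathfrak{p}_j$, hence a unit'' cannot be reached. The paper's proof (\S\ref{ss:ded} with Proposition \ref{rem:plus}) circumvents this by introducing the ideal $\mathfrak{I}$ generated by \emph{all} values, letting $g_i$ be the exponent of $\mathcal{Q}_i$ in $\mathfrak{I}$, and imposing the weaker, but achievable, condition $(P_1(m),\ldots,P_\ell(m))\not\equiv(0,\ldots,0)\pmod{\mathcal{Q}_i^{g_i+1}}$; the valuation count then shows that a common non-unit divisor $a$ would force $\mathfrak{I}\subseteq aZ$, contradicting (AV). This extra idea is essential and is absent from your proposal.

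In the \emph{UFD} case, your reduction to finitely many prime divisors of $d$ and the description of each bad set $B_j$ as a finite union of cosets of $p_jZ$ are correct, and invoking B.~H.~Neumann's covering lemma is a genuinely different (and appealing) substitute for the paper's inductive Lemma \ref{lemma:3}. However, the inference ``$Z$ not a PID $\Rightarrow\dim Z\ge 2\Rightarrow$ every principal prime is strictly contained in a maximal ideal'' is invalid: Krull dimension $\ge 2$ does not prevent a height-one principal prime from being maximal (e.g.\ $(xy-1)$ is a principal maximal ideal of the two-dimensional UFD $k[x]_{(x)}[y]$), so you have not justified that every $p_jZ$ has infinite additive index, which is exactly what Neumann's lemma requires. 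The repair is the one built into the paper's Lemma \ref{lemma:3}: split the $p_j$ into those with $p_jZ$ maximal --- these are automatically pairwise comaximal and comaximal with every other $p_{j'}Z$, so ordinary CRT prescribes $m\equiv z_j$ there --- and those with $p_jZ$ non-maximal, for which $Z/p_jZ$ is an infinite domain and your covering argument (in the strong form: the finite-index cosets of any covering already cover) applies on the residual CRT coset. With that repair, and with the Dedekind case replaced by the $\mathcal{Q}_i^{g_i+1}$ argument, your strategy would align with the paper's.
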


{Assumption on Values} {\rm (AV)} is the exact translation of the fixed divisor assumption ${\mathcal F}_{t}(P)=\emptyset$ 
for the polynomial $P=P_1(t)y_1+\cdots + P_\ell(t)y_\ell$ considered above.

\begin{remark} \label{rem:PID} (a) The situation that $Z$ is a UFD is the natural context for the coprime Schinzel 
Hypothesis: primes
are the irreducible elements, Gauss's lemma is available, etc. We will however not use the full UFD property and prove Theorem \ref{thm:main_theorem} for domains that we call {\it near UFD}. These play a central role in the paper and are defined by this sole property:
 {\it every non-zero element has finitely many prime divisors, and every non-unit has at least one}; we say more on {near UFDs} in \S \ref{ssec:near_UFD}.
 Theorem \ref{thm:main_theorem} also holds for some non near UFDs, starting with Dedekind domains;
the ring of entire functions is another type of example (Proposition \ref{prop:entire_functions}); on the other hand, the coprime Schinzel Hypothesis may fail, \hbox{e.g.} for $Z=\Zz[\sqrt{5}]$ (Proposition \ref{prop:Zsqrt5}).
\vskip 1mm

\noindent
(b) If $Z$ is infinite, then infinitely many $m$ in fact satisfy the conclusion of Theorem \ref{thm:main_theorem} (see Remark \ref{rem:small_density}). If $Z$ is finite, it is a field, and for fields, ``coprime'' means ``not all zero''. This makes the {\rm coprime Schinzel Hypothesis} obviously true, with the difference for finite fields that the infiniteness of good $m$ is of course not true.
\footnote{Passing from ``at least one" to ``infinitely many" prime values is not nearly as convenient for the original Schinzel Hypothesis. Indeed, \cite{skoro-schinzel} establishes asymptotic results showing that ``most" irreducible integer polynomials without fixed prime divisors take at least one prime value, whereas the infiniteness assertion is not known for a single non-linear polynomial.}
\vskip 0,5mm

\noindent
(c) Theorem \ref{thm:main_theorem} with $Z=\Zz$, contained as we said in \cite[Thm.1]{schinzel2002}, is also a co\-ro\-llary of  \cite[Thm.1.1]{BDN20}, which shows this stronger property  for $Z$ a PID:
\vskip 0,5mm

\noindent
(**) {\it for $P_1(t),\ldots,P_\ell(t)$ as in Theorem \ref{thm:main_theorem}, but not necessarily satisfying assumption {\rm (AV)}, {\it the set ${\mathcal D}=\{\hbox{\rm gcd}(P_1(m),\ldots,P_\ell(m))\hskip 2pt | \hskip 2pt m\in \Zz\}$ is finite and stable under gcd}.  }
\vskip 0,5mm

\noindent
We show in \S \ref{ssec:gcd_stable} that this property is false in general when $Z$ is only a UFD. 
\end{remark}

In addition to the original Schinzel Hypothesis and its coprime version, Theorem \ref{thm:hilbert-schinzel-main-examples} relates to 
Hilbert's Irreducibility Theorem (HIT). In the setup of Theorem \ref{thm:hilbert-schinzel-main-examples} and with $Q$ the fraction field of $Z$, the classical Hilbert result concludes that for every $\underline m$ in some Zariski-dense subset $H\subset Q^k$, the polynomials $P_1(\underline m,\underline y),\ldots, P_s(\underline m,\underline y)$ are irreducible {\it in} $Q[\underline y]$ \cite[Theorem 13.14.2]{FrJa}. In Theorem \ref{thm:hilbert-schinzel-main-examples}, we insist that $H\subset Z^k$ \underline{and} the irreducibility of $P_1(\underline m,\underline y),\ldots, P_s(\underline m,\underline y)$ be {\it over the ring $Z$}, \hbox{i.e.} in $Z[\underline y]$. As $Z$ is a UFD, this is equivalent to $P_1(\underline m,\underline y),\ldots, P_s(\underline m,\underline y)$ being irreducible in $Q[\underline y]$ \underbar{{and}} {\it primitive w.r.t.} $Z$ \footnote{A polynomial over an integral domain $Z$ is {\it primitive \hbox{\it w.r.t.} $Z$} 
if its coefficients are coprime in $Z$. A monomial is primitive iff its leading coefficient is a unit of $Z$. The zero polynomial is not primitive.}.

For an integral domain that is not necessarily a UFD, we generalize the Hilbert-Schinzel property as follows. Assume that $Z$ is of characteristic $0$ or imperfect\footnote{{\it Imperfect} means that $Z^p\not=Z$ if $p={\rm char}(Z)$. This ``imperfectness assumption'' is made to avoid some subtelty from the Hilbertian field theory (\hbox{e.g.} explained  in \cite[\S 4.1]{BDN19}) that otherwise leads to distinguish between Hilbertian fields and strongly Hilbertian fields  and is irrelevant in this paper.}.

\begin{definition} \label{def:hilbert-schinzel}
The ring $Z$ has the {\it Hilbert-Schinzel specialization property} for integers $k,n,s\geq 1$ if the following holds. Let $P_1(\underline t,\underline y), \ldots, P_s(\underline t,\underline y)$ be $s$ \hbox{polynomials}, irreducible in $Q[\underline t,\underline y]$, primitive \hbox{w.r.t.} $Z$, of degree $\geq 1$ in $\underline y$.  Assume that $P_1\cdots P_s$ has no fixed divisor in $Z$ \hbox{w.r.t.} $\underline t$.
Then there is a Zariski-dense subset $H\subset Z^k$ such that \hbox{for every $\underline m \in H$}, the polynomials $P_1(\underline m,\underline y),\ldots, P_s(\underline m,\underline y)$ are irreducible in $Q\hskip 1pt [\underline y\hskip 1pt]$ and primitive \hbox{w.r.t.} $Z$.
\end{definition}

It follows from the conclusion that for $\underline m \in H$, the polynomials $P_1(\underline m,\underline y),\ldots, P_s(\underline m,\underline y)$ are irreducible in $Z\hskip 1pt [\underline y\hskip 1pt]$; this implication holds without the UFD assumption.

More classical definitions (recalled in Definition \ref{def:HilbertUFD}) disregard the primitivity part. For a {\it Hilbertian ring},
only the irreduciblity in $Q\hskip 1pt [\underline y\hskip 1pt]$ is requested in the conclusion, and the fixed divisor condition ${\mathcal F}_{\underline t}(P_1\cdots P_s) = \emptyset$ is not assumed.
If $Z$ is a field (and so conditions on primitivity and fixed divisors  automatically hold and may be omitted), Definition \ref{def:hilbert-schinzel} is that of a {\it Hilbertian field}.

The following result generalizes Theorem \ref{thm:hilbert-schinzel-main-examples}. 

\begin{theorem} \label{thm-intro}
Assume that $Z$ is a Hilbertian ring. Then we have the following.
\vskip 0,5mm

\noindent
{\rm (a)} If $Z$ is a near UFD\footnote{as defined in Remark \ref{rem:PID}(a).}, the Hilbert-Schinzel property holds for any $k,n,s\geq 1$.
\vskip 0,5mm

\noindent
{\rm (b)} If $Z$ is a Dedekind domain, the Hilbert-Schinzel property holds with $k=1$ and $n,s\geq 1$.
\end{theorem}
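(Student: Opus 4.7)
The plan is to split the conclusion of Definition \ref{def:hilbert-schinzel} into its two parts — irreducibility of each $P_i(\underline m,\underline y)$ in $Q[\underline y]$, and primitivity of each $P_i(\underline m,\underline y)$ with respect to $Z$ — and combine two separate tools: the Hilbertian property of $Z$ for the first, and the general coprime Schinzel theorem (Theorem \ref{thm:main_theorem_gen}) applied to the coefficient family of the product $P_1\cdots P_s$ for the second.

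First, applying Hilbertianness of $Z$ directly to $P_1,\ldots,P_s$ yields a Hilbert subset $H_0\subset Z^k$, in particular Zariski-dense, such that for every $\underline m\in H_0$ each $P_i(\underline m,\underline y)$ is irreducible in $Q[\underline y]$. This step uses neither the fixed-divisor hypothesis nor primitivity.

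Next, to obtain primitivity, I would introduce $P = P_1\cdots P_s$ and expand $P = \sum_\alpha P_\alpha(\underline t)\,\underline y^\alpha$. A preliminary observation: each $P_i$, being irreducible in $Q[\underline t,\underline y]$, of degree $\geq 1$ in $\underline y$, and primitive w.r.t. $Z$, is in fact primitive w.r.t. $Z[\underline t]$, since a non-constant common divisor of all coefficients of $P_i$ would divide $P_i$ and contradict its irreducibility, while a non-unit constant common divisor would contradict primitivity w.r.t. $Z$. Hence $P$ is also primitive w.r.t. $Z[\underline t]$ in the near UFD case, so the family $\{P_\alpha\}_\alpha$ is coprime in $Q[\underline t]$. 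The hypothesis that $P$ has no fixed divisor in $Z$ w.r.t. $\underline t$ translates exactly into assumption (AV) of Theorem \ref{thm:main_theorem} for $\{P_\alpha\}_\alpha$. Applying the multivariable generalization Theorem \ref{thm:main_theorem_gen} then provides a Zariski-dense subset $H_1\subset Z^k$ on which all $P_\alpha(\underline m)$ are coprime in $Z$, equivalently, $P(\underline m,\underline y)$ is primitive w.r.t. $Z$. Since a prime dividing a factor of a product divides the product, primitivity of $P(\underline m,\underline y) = \prod_i P_i(\underline m,\underline y)$ forces primitivity of each $P_i(\underline m,\underline y)$.

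The last step is to set $H = H_0\cap H_1$ and to guarantee Zariski-density. The natural route is to arrange that the set $H_1$ produced by the proof of Theorem \ref{thm:main_theorem_gen} is itself a Hilbert subset, or can be taken inside any prescribed Hilbert subset; then stability of Hilbert subsets under finite intersection yields a Hilbert subset inside $H_0\cap H_1$ on which both conclusions hold, and in particular a Zariski-dense such set. For part (b), the same scheme is carried out verbatim with $k=1$, which is exactly the range in which the coprime Schinzel machinery is available for a Dedekind $Z$ via Theorem \ref{thm:main_theorem}. The main obstacle I anticipate is this final density step: one must ensure that the primitivity set produced by Schinzel and the irreducibility Hilbert subset produced by HIT are of compatible types, which forces the proof of Theorem \ref{thm:main_theorem_gen} to proceed through a Hilbert-type construction rather than merely an abstract Zariski-dense set. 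The restriction to $k=1$ in the Dedekind case reflects exactly the loss of the UFD hypothesis, which prevents the Gauss-type and coefficient-by-coefficient manipulations above from being carried out jointly in several $t_i$.
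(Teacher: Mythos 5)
Your decomposition identifies the right two ingredients (Hilbertianity for irreducibility in $Q[\underline y]$, the coprime Schinzel machinery for primitivity), but the step you flag as "the main obstacle" is a genuine gap, and the fix you sketch for it is not the one that works. The primitivity set $H_1$ coming out of Theorem \ref{thm:main_theorem_gen} is not a Hilbert subset and cannot be arranged to be one: by Lemma \ref{lemma:1} it is a union of congruence classes modulo the parameter $\delta$ (resp.\ modulo $\omega$), i.e.\ a union of arithmetic progressions. An intersection of an abstract Zariski-dense set with a Hilbert subset can perfectly well be empty, so "set $H=H_0\cap H_1$" does not close the argument. The paper's resolution reverses your order of operations: it first runs the Schinzel step to produce $\alpha$ and a nonzero modulus $\omega$ such that \emph{every} point of the progression $(\alpha+\omega\ell)_{\ell\in Z}$ gives primitive specializations (this is exactly the periodicity of Lemma \ref{lemma:1}, resp.\ Proposition \ref{rem:plus}), and only then applies the Hilbertian ring property --- not to $P_1,\ldots,P_s$ themselves, but to the reparametrized polynomials $P_i(\alpha+\omega t,\underline y)$. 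Since $t\mapsto\alpha+\omega t$ is invertible over $Q$, these remain irreducible in $Q[t,\underline y]$, and the Hilbert specializations $\ell\in Z$ one obtains automatically land inside the progression where primitivity is already guaranteed. That is the mechanism by which the two sets are made to meet; it is not available if you fix the Hilbert subset $H_0$ first.

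For $k\geq 2$ there is a second, independent problem with the "all Schinzel, then all Hilbert" scheme: the multivariable conclusion ${\mathcal PriSpe}^{\displaystyle\star}(k,s)$ does not give a full product of arithmetic progressions in $Z^k$ --- it produces a single tuple $(m_1,\ldots,m_{k-1})$ together with a progression in the last variable only. So even after reordering, one cannot reparametrize all $k$ variables at once and quote Hilbertianity a single time. The paper instead interleaves the two steps variable by variable (Lemma \ref{lem:reduction}): at each stage it specializes $t_1\mapsto m_1$ inside an arithmetic progression chosen so that the specialized product still has no fixed prime divisor with respect to the remaining variables $(t_2,\ldots,t_k)$ (this requires the congruence bookkeeping with the finite set ${\mathcal P}$ of small-norm primes and Lemmas \ref{lemma:3} and \ref{lem:cond-star}), while simultaneously using Hilbertianity to keep $P_i(m_1,t_2,\ldots,t_k,\underline y)$ irreducible in $Q[t_2,\ldots,t_k,\underline y]$. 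Your proposal has no mechanism for propagating the fixed-divisor hypothesis through partial specializations, which is the actual content of the induction. Your final remark attributing the restriction to $k=1$ in the Dedekind case to the failure of Gauss-type manipulations is also not quite the point; the issue there is that the Dedekind argument (via the ideal $\mathfrak I$ and the Chinese Remainder Theorem) is genuinely one-variable, and the paper only recovers $k\geq 1$ for Dedekind domains in the special case $s=1$ (Theorem \ref{thm:dedekind-HS-k}).
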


\vskip -0,8mm

Hilbert's Irreducibility Theorem is one of the few general and powerful tools in \hbox{Arithmetic} Geometry. Typically it is used  
when one needs to find irreducible fibers of some morphism above closed points, defined over some {field}.
 The flagship example, Hilbert's motivation in fact, was the realization of the symmetric group $S_k$ 
as a Galois group over $\Qq$, via the consideration of the morphism $\Aa^k \rightarrow \Aa^k/S_k$, or equivalently, of the generic polynomial $P(\underline t,y) = y^k+t_1y^{k-1}+\cdots+t_k$ of degree $k$  (e.g. \cite[\S 3]{Serre-topics}).
More geometric situations {\it \`a la} Bertini are numerous too,  starting with 
that of an irreducible
family of hy\-per\-sur\-faces $(P(\underline t,\underline y) = 0) \subset \Aa^{n}$ parametrized by ${\underline t}\in \Aa^k$  (see \cite[\S 10.4]{FrJa} for a specific statement of the 
Bertini-Noether theorem).
 Theorem \ref{thm-intro} extends the scope of HIT and its applications to allow working over {\it rings}.
It is for example a good tool when investigating the arithmetic of families of number rings $\Zz[\underline t,y]/\langle P(\underline t,y)\rangle$ with $\underline t \in \Zz^k$, or, in a geometric context, to deal with  \hbox{Bertini irreducibility conclusions 
over rings.}
\vskip 2mm

\begin{remark} (a) \cite[Theorem 4.6]{BDN19} provides a large class of Hilbertian rings: those domains $Z$ such that the fraction field $Q$ has {\it a product formula} (and is of characteristic $0$ or imperfect). We refer to \cite[\S 15.3]{FrJa} or \cite[\S 4.1]{BDN19} for a full definition. The basic example is $Q=\Qq$. The product formula is: $\prod_{p} |a|_p \cdot |a| = 1$ for every $a\in \Qq^\ast$, where $p$ ranges over all prime numbers, $|\cdot|_p$ is the $p$-adic absolute value  and $|\cdot |$ is the standard absolute value. Rational function fields $k(x_1,\ldots,x_r)$ in $r\geq 1$ variables over a field $k$, and finite extensions of fields with the product formula are other examples \cite[\S 15.3]{FrJa}. 
\vskip 1mm

\noindent
(b) The more concrete product formula condition on $Q$ may thus replace the Hilbertian ring assumption in Theorem \ref{thm-intro}. This shows Theorem \ref{thm:hilbert-schinzel-main-examples} as a special case of Theorem \ref{thm-intro}(a). This also provides a large class of rings to which  Theorem \ref{thm-intro}(b) applies: {\it all rings of integers of number fields}. On the other hand, as mentioned earlier, the coprime Schinzel Hypothesis fails for $Z=\Zz[\sqrt{5}]$, and so, so does the Hilbert-Schinzel property. Yet, $\Zz[\sqrt{5}]$ is a Hilbertian ring; 
it is however neither a near UFD nor a Dedekind domain.
\vskip 1mm

\noindent
(c) It is unclear whether Theorem \ref{thm-intro}(b) extends to the situation $k\geq 1$. We refer to Theorem \ref{thm:dedekind-HS-k} for a version of Theorem \ref{thm-intro} for Dedekind domains with $k\geq 1$ and $s=1$.
\vskip 1mm

\noindent
(d) We show further, in Lemma \ref{lem:prelim-proof-R[u]}, that for a near UFD $Z$, assumption ${\mathcal F}_{\underline t}(P_1\cdots P_s) = \emptyset$ always holds in Definition \ref{def:hilbert-schinzel} {\rm (and so can be omitted})  if $Z$ has this {\it infinite residue property}: every principal prime ideal $pZ$ is of infinite norm $|Z/pZ|$. Furthermore this property au\-to\-ma\-tically holds in these cases: (a) $Z=R[u_1,\ldots,u_r]$ is any polynomial ring over an in\-te\-gral domain $R$ unless $Z=\Ff_q[u]$,
(b) if $Z$ contains an infinite field. The infinite residue pro\-pe\-rty  fails if $Z$ is $\Zz$ or, more generally, the ring of integers of a number field. Other ways to get rid of the assumption ${\mathcal F}_{\underline t}(P_1\cdots P_s) = \emptyset$ are explained in \S \ref{get-rid-SC}.
\end{remark}

\noindent
The paper is organized as follows. The coprime Schinzel Hypothesis (from Theorem \ref {thm:main_theorem}) will be defined in its general form for $s\geq 1$ sets of polynomials $\{P_{i1}(\underline t),\ldots ,P_{i\ell_i}(\underline t)\}$ in $k\geq 1$ variables $t_1,\ldots,t_k$ (see Definition \ref{th:schinzel-coprime2}). Section \ref{sec:further} is devoted to the special case $k=s=1$, \hbox{\hbox{i.e.}} the case considered in Theorem \ref {thm:main_theorem}, and Section \ref{ssec:proof-main} to the general case $k,s \geq 1$. The Hilbert-Schinzel specialization property (from Definition \ref{def:hilbert-schinzel}) is discussed in Section \ref{ssec:SH_more_general_rings}; in particular, Theorem \ref{thm-intro} is proved there.

\section{The coprime Schinzel Hypothesis - case $k=s=1$} \label{sec:further}

For simplicity, and to avoid confusion, we denote by \CopSch\hskip 2pt  the coprime Schinzel Hypothesis in the  form given in Theorem \ref{thm:main_theorem}  (which corresponds to the case $k=s=1$ of Definition \ref{th:schinzel-coprime2} given later).

In \S \ref{ssec:preliminaries}, we introduce a basic parameter of the problem. We then prove Theorem \ref{thm:main_theorem} for Dedekind domains in \S \ref{ss:ded}. The other case of Theorem \ref{thm:main_theorem} will be proved in the more general situation $k,s \geq 1$ in \S \ref{ssec:proof-main} for {\it near UFDs}. We introduce them and briefly discuss some basic properties in \S \ref{ssec:near_UFD}.
In \S \ref{sec:section 3}, we consider property \CopSch\hskip 2pt over rings that are neither near UFDs nor Dedekind domains.  
Finally \S \ref{ssec:gcd_stable} discusses the gcd stability property mentioned in Remark \ref{rem:PID}(c) and displays the counter-example announced there.
\vskip 1mm

Let $Z$ be an integral domain. Denote the fraction field of $Z$ by $Q$ and the group of invertible elements, also called units, by $Z^\times$.

\subsection{A preliminary lemma} \label{ssec:preliminaries}
Let $t$ be a variable and $P_1,\ldots, P_\ell\in Z[t]$ be $\ell$ nonzero polynomials  ($\ell\geq 2)$, assumed to be coprime in $Q[t]$; equivalently, they have no common root in an algebraic closure of $Q$.
As $Q[t]$ is a PID, we have $\sum_{i=1}^\ell   P_i\hskip 2pt Q[t]= Q[t]$. It follows that 
$(\sum_{i=1}^\ell   P_i\hskip 2pt Z[t]) \cap Z$ is a nonzero ideal of $Z$. Fix a nonzero element $\delta\in Z$ 
in this ideal. For example, if $\ell=2$, one can take $\delta$ to be the resultant $\rho={\rm Res}(P_1,P_2)$
\cite[V \S 10]{Langoriginal}.

\begin{lemma} \label{lemma:1} 
For every $m\in Z$, denote the set of common divisors of $P_1(m),\ldots,P_\ell(m)$
by ${\mathcal D}_m$. Then for every $m\in Z$, the set  ${\mathcal D}_m$ is a subset of the set of divisors of $\delta$. Furthermore,  for every $z \in Z$, we have ${\mathcal D}_m = {\mathcal D}_{m+z \delta}$.
\end{lemma}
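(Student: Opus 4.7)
The plan is to exploit the Bézout-style identity underlying the definition of $\delta$: by construction there exist $U_1,\ldots,U_\ell\in Z[t]$ with
\[
\delta \;=\; U_1(t)\,P_1(t)+\cdots+U_\ell(t)\,P_\ell(t).
\]
This single relation will drive both assertions.

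For the first assertion, I would specialize $t=m$. If $d\in{\mathcal D}_m$, then $d$ divides every $P_i(m)$, hence $d$ divides $\sum_{i=1}^\ell U_i(m)\,P_i(m)=\delta$. So ${\mathcal D}_m$ is contained in the set of divisors of $\delta$, which moreover shows that ${\mathcal D}_m$ is finite (up to associates) independently of $m$.

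For the second assertion, I would use the elementary congruence that for any polynomial $F\in Z[t]$ and any $a,b\in Z$, one has $F(a+b)\equiv F(a)\pmod b$, applied with $b=z\delta$. Thus for each $i$,
\[
P_i(m+z\delta)\;=\;P_i(m)+\delta\cdot R_i
\]
for some $R_i\in Z$. Now take $d\in{\mathcal D}_m$: by the first part $d\mid\delta$, and $d\mid P_i(m)$ by assumption, so $d\mid P_i(m+z\delta)$, proving ${\mathcal D}_m\subseteq{\mathcal D}_{m+z\delta}$. The reverse inclusion is symmetric: applying the first part to the element $m+z\delta$ shows ${\mathcal D}_{m+z\delta}$ also consists of divisors of $\delta$, and the same congruence (rewritten as $P_i(m)=P_i(m+z\delta)-\delta R_i$) gives ${\mathcal D}_{m+z\delta}\subseteq{\mathcal D}_m$.

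There is no real obstacle here; the only mild point to watch is that the argument for the second inclusion must invoke the first assertion a second time (applied at $m+z\delta$) in order to know that elements of ${\mathcal D}_{m+z\delta}$ divide $\delta$. Everything else is formal manipulation of the Bézout identity and the standard congruence $F(a+b)\equiv F(a)\pmod b$, both of which are valid over an arbitrary integral domain, so no hypothesis on $Z$ beyond being a domain is used.
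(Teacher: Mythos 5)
Your proof is correct and follows essentially the same route as the paper: the Bézout identity $\sum_i V_i(t)P_i(t)=\delta$ specialized at $t=m$ for the first claim, and the congruence $P_i(m+z\delta)\equiv P_i(m)\pmod{\delta}$ combined with the fact (from the first claim) that elements of ${\mathcal D}_m$ and ${\mathcal D}_{m+z\delta}$ divide $\delta$ for the second. Your explicit remark that the first assertion must be invoked again at $m+z\delta$ for the reverse inclusion is exactly the point the paper's proof also relies on.
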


\begin{proof}
From the coprimality assumption in $Q[t]$ of $P_1,\ldots, P_\ell$, there exist some polynomials $V_1,\ldots,V_\ell \in Z[t]$ satisfying a B\'ezout condition
\vskip 1mm

\noindent
\centerline{$V_1(t)P_1(t)+\cdots+V_\ell(t)P_\ell(t)=\delta.$}
\vskip 1mm

\noindent
The same holds with any $m\in Z$ substituted for $t$. The first claim follows. For the second claim, we adjust an argument of Frenkel and Pelik\'{a}n \cite{FP} who observed this periodicity
property in the special case ($\ell=2$, $Z=\Zz$) with $\delta$ equal to the resultant $\rho={\rm Res}(P_1,P_2)$.

For every $m,z \in Z$, we have $P_i(m+z \delta) \equiv P_i(m) \pmod {\delta}$, $i=1,\ldots,\ell$. It follows that the common divisors of $P_1(m),\ldots,P_\ell(m),\delta$ are the same as those of $P_1(m+z \delta)$,$\ldots$, $P_\ell(m+z \delta), \delta$. As both the common divisors of $P_1(m),\ldots,P_\ell(m)$ and those of $P_1(m+z \delta)$, $\ldots$, $P_\ell(m+z \delta)$ divide $\delta$, the conclusion ${\mathcal D}_m = {\mathcal D}_{m+z\delta}$ follows.
\end{proof}

\begin{remark}[{\rm on the set of ``good'' $m$}] \label{rem:small_density}
It follows from the periodicity property that, if $Z$ is infinite, then the set, say ${\mathcal S}$, of all $m\in Z$ such that $P_1(m)$,\ldots,$P_\ell(m)$ are coprime in $Z$, is infinite if it is nonempty. 
The set ${\mathcal S}$ can nevertheless be of arbitrarily small density. Take $Z=\Zz$, $P_1(t) = t$, $P_2(t)=t+\Pi_h$, with $\Pi_h$ ($h\in \Nn$) the product of primes in $[1,h]$. The set ${\mathcal S}$ consists of the integers which are prime to $\Pi_h$. Its density is: 

\vskip 1mm
\centerline{$\displaystyle \frac{\varphi(\Pi_h)}{\Pi_h} = \left(1-\frac{1}{2}\right) \cdots \left(1-\frac{1}{p_h}\right)$}
\vskip 1mm

\noindent
where $p_h$ is the $h$-th prime number and $\varphi$ is the Euler function. The sequence $\varphi(\Pi_h)/\Pi_h$
tends to $0$ when $h\rightarrow \infty$ (since the series $\sum_{h=0}^\infty 1/p_h$ diverges).
\end{remark}

\subsection{Proof of Theorem \ref{thm:main_theorem} for Dedekind domains}
\label{ss:ded}
The assumptions and notation of this paragraph, including Proposition \ref{rem:plus} below are as follows. The ring $Z$ is a Dedekind domain. As in the coprime Schinzel Hypothesis, $P_1(t),\dots, P_\ell(t)$ are $\ell \geq 2$ nonzero polynomials in $Z[t]$, coprime in $Q[t]$ and satisfying Assumption (AV); $\delta$ is the associated parameter from \S \ref{ssec:preliminaries} and $\delta Z = \prod_{i=1}^r \mathcal{Q}_i^{e_i}$ is the factorization of the principal ideal $\delta Z$ into prime ideals of $Z$.  
We also define $\mathfrak{I}$ as the ideal generated by all values $P_1(z),\dots, P_\ell(z)$ with $z\in Z$, and factor $\mathfrak{I}$ into prime ideals: $\mathfrak{I} = \prod_{i=1}^q \mathcal{Q}_i^{g_i}$; we may assume  that each of  the prime ideals $\mathcal{Q}_1,\ldots,\mathcal{Q}_r$ dividing $\delta Z$ indeed occurs in the product by allowing exponents $g_i$ to be $0$.
\vskip 1mm

Consider then the ideals $\mathcal{Q}_i^{g_i+1}$,  $i=1,\ldots,r$. Either $r\leq 1$ or any two of them are comaximal\footnote{Two ideals $U,V$ of an integral domain $Z$ are {\it comaximal} if $U+V = Z$.}. As none of them contains $\mathfrak{I}$, for each $j=1,\ldots,r$, there exists $i_j\in \{1,\dots, \ell\}$ and $m_j\in Z$ such that $P_{i_j}(m_j)\not\equiv 0 \pmod{\mathcal{Q}_j^{g_j+1}}$. 
 The Chinese Remainder Theorem yields an element $m\in Z$ such that $m\equiv m_j \pmod{\mathcal{Q}_j^{g_j+1}}$, for each $j=1,\ldots,r$. 
It follows that $P_{i_j}(m)\notin \mathcal{Q}_j^{g_j+1}$, and so
\vskip 1mm

\noindent
\centerline{$(P_1(m),\ldots,P_\ell(m)) \not\equiv (0,\ldots,0) \pmod {\mathcal{Q}_i^{g_i+1}},\hskip 1mm \hbox{for each}\ i=1,\ldots, r$.}
\vskip 1,5mm

Proposition \ref{rem:plus}(a) below (with $z=0$ and $\alpha=m$) concludes that $P_1(m),\dots, P_\ell(m)$ 
are coprime in $Z$, 
thus ending the proof of Theorem \ref{thm:main_theorem} for Dedekind domains. The more general statement in Proposition \ref{rem:plus}(a) and the additional statement (b) will be used later.

\begin{proposition} \label{rem:plus} Under the assumption and notation of \S \ref{ss:ded},
let $\omega \in Z$ be a multiple of $\delta$ 
and let $\alpha\in Z$ be an element such that 
\vskip 0,5mm

\noindent
{\rm (*)} \hskip 15mm $(P_1(\alpha),\ldots,P_\ell(\alpha)) \not\equiv (0,\ldots,0) \pmod {\mathcal{Q}_i^{g_i+1}},\hskip 1mm i=1,\ldots, r$.
\vskip 1mm

\vskip 0,5mm

\noindent
{\rm (a)} Then, for every $z \in Z$, the elements $P_1(\alpha+z \omega),\dots, P_\ell(\alpha+z \omega)$ 
are coprime in $Z$.
\vskip 1mm

\noindent
{\rm (b)} Furthermore, if instead of {\rm (AV)}, it is assumed that {\it no \underbar{prime} of $Z$ divides all values $P_1(z),\ldots,P_\ell(z)$ with $z\in Z$}, then for every $z \in Z$, the elements $P_1(\alpha+z \omega),\dots, P_\ell(\alpha+z \omega)$ 
have no common \underbar{prime} divisor. 
\end{proposition}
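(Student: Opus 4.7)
The plan is to reduce the claim at $\alpha+z\omega$ to the case of $\alpha$ itself by invoking the periodicity of Lemma \ref{lemma:1}. Since $\omega$ is a multiple of $\delta$, we have $\alpha+z\omega\equiv\alpha\pmod{\delta}$ for every $z\in Z$, so Lemma \ref{lemma:1} gives $\mathcal{D}_{\alpha+z\omega}=\mathcal{D}_{\alpha}$, and every element of this set divides $\delta$. It therefore suffices to show for (a) that no non-unit divisor of $\delta$ is a common divisor of $P_1(\alpha),\ldots,P_\ell(\alpha)$, and for (b) that no prime element of $Z$ dividing $\delta$ can be such a common divisor.

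For (a), let $d$ be a non-unit divisor of $\delta$ and factor its principal ideal as $(d)=\prod_{i=1}^{r}\mathcal{Q}_i^{f_i}$ with $0\le f_i\le e_i$ (and at least one $f_i\ge 1$ since $d$ is not a unit). The key observation is that (AV) rules out the possibility $f_i\le g_i$ for every $i$: if this held, then $(d)\supseteq\mathfrak{I}$, so the non-unit $d$ would divide every value $P_j(z)$, contradicting (AV). Hence $f_{i_0}>g_{i_0}$ for some index $i_0$, and condition (*) at $\mathcal{Q}_{i_0}$ supplies some $j$ with $v_{\mathcal{Q}_{i_0}}(P_j(\alpha))\le g_{i_0}<f_{i_0}$, so $d\nmid P_j(\alpha)$.

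For (b), suppose a prime element $p$ of $Z$ divides every $P_j(\alpha+z\omega)$. By the reduction, $p\mid\delta$, so the principal prime ideal $(p)$ must coincide with some $\mathcal{Q}_{i_0}$. Applying the weakened hypothesis of (b) to the prime $p$, some value $P_j(z)$ is not divisible by $p$, i.e.\ $v_{\mathcal{Q}_{i_0}}(P_j(z))=0$; since $g_{i_0}=v_{\mathcal{Q}_{i_0}}(\mathfrak{I})=\min_{j,z}v_{\mathcal{Q}_{i_0}}(P_j(z))$, this forces $g_{i_0}=0$. Condition (*) then yields some $P_j(\alpha)\notin\mathcal{Q}_{i_0}=(p)$, contradicting that $p$ divides every $P_j(\alpha+z\omega)$ (equivalently every $P_j(\alpha)$).

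The main point, common to both parts, is that (AV) (or its weakened version in (b), when restricted to principal primes) is exactly what prevents the exponent vector of a prospective principal common divisor from lying termwise below that of $\mathfrak{I}$. Once some exponent $f_{i_0}$ strictly exceeds $g_{i_0}$, condition (*) supplies a sharp enough bound on a specific $v_{\mathcal{Q}_{i_0}}(P_j(\alpha))$ to break common divisibility, and Lemma \ref{lemma:1} transports the conclusion from $\alpha$ to every $\alpha+z\omega$.
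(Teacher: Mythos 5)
Your proof is correct and follows essentially the same route as the paper's: reduce to $\alpha$ via the periodicity of Lemma \ref{lemma:1}, observe that any common divisor divides $\delta$ and hence factors through the $\mathcal{Q}_i$, use (AV) (or its prime variant) to force some exponent $f_{i_0}>g_{i_0}$, and then contradict (*). The only cosmetic difference is that you phrase the argument contrapositively with explicit valuations, and in (b) you spell out the specialization to a principal prime (forcing $g_{i_0}=0$) that the paper leaves as "replace $a$ by $p$"; both are the same argument.
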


\begin{proof} (a) From Lemma \ref{lemma:1}, it suffices to show that $P_1(\alpha),\dots, P_\ell(\alpha)$ have no non-unit divisors. Assume on the contrary that 
\vskip 0,5mm

\noindent
\centerline{$(P_1(\alpha),\dots, P_\ell(\alpha)) \equiv 0 \pmod{a}$ for some non-unit $a\in Z$.}
\vskip 0,5mm

\noindent
By the definition of $\delta$ and Lemma \ref{lemma:1}, the element $a$ divides $\delta$. Thus 
the prime ideal factorization of $aZ$ is of the form $aZ=\prod_{i=1}^r \mathcal{Q}_i^{f_i}$, with exponents $f_i\le e_i$. Due to assumption (AV), there must exist an index $i\in \{1,\dots, r\}$ such that $f_{i} > g_{i}$: otherwise $\mathfrak{I} \subset aZ$, \hbox{i.e.} $a$ divides all values $P_1(z),\dots, P_\ell(z)$ ($z\in Z$). Consequenty, $f_i \geq g_{i+1}$ and so $aZ\subset \mathcal{Q}_i^{g_i+1}$, for the same index $i$. But this contradicts assumption (*).

\vskip 1mm

\noindent
(b) Merely replace in the proof of (a) the non-unit $a$ by a prime $p$, and resort to the variant of (AV) assumed in (b).
\end{proof}

\subsection{A few words on near UFDs} \label{ssec:near_UFD} 
This subsection says more on near UFDs for which we will prove the full coprime Schinzel Hypothesis in \S \ref{ssec:proof-main}. They will also serve, with Dedekind domains, as landmarks in the discussion of the Hypothesis over other domains in \S \ref{sec:section 3}.

Recall from Remark \ref{rem:PID} that we call an integral domain $Z$ a {\it near UFD} if  
{every non-zero element has finitely many prime divisors, and every non-unit has at least one.}
Of course, a UFD is a near UFD. A simple example showing that the converse does not hold is the ring $\mathbb{Z}_p + X\mathbb{Q}_p[X]$ of polynomials over $\mathbb{Q}_p$ with constant coefficient
in $\mathbb{Z}_p$; 
see \hbox{Example \ref{rem:other-examples}.}

It is worth noting further that 
\vskip 0,5mm

\noindent
(a) as for UFDs, every irreducible element $a$ of a near UFD $Z$ is a prime: indeed, such an $a$ is divisible by a prime $p$ of $Z$; being irreducible,
$a$ must in fact be associate to $p$.
\vskip 0,5mm

\noindent
(b) unlike UFDs, near UFDs do not satisfy the Ascending Chain Condition on Principal Ideals in general, \hbox{i.e.} there exist near UFDs which 
have an infinite strictly ascending chain of principal ideals; see Example \ref{rem:other-examples} below.
\vskip 0,5mm

It is classical that being a UFD is equivalent to satisfying these two conditions: the Ascending Chain Condition on Principal Ideals holds and 
every irreducible is a prime. Thus a near UFD is a UFD if and only if it satisfies the Ascending Chain Condition on Principal Ideals. 
In particular, a near UFD that is Noetherian is a UFD.

\begin{example} \label{rem:other-examples}
%
Let $R$ be a UFD, not a field, and let $K$ the field of fractions of $R$. Let $Z=R+XK[X]$ be the ring of polynomials over $K[X]$ with 
constant coefficient in $R$. Then it is well-known that $Z$ is not a UFD. Indeed, any prime element $p$ of $R$ remains prime in $Z$, and one has factorizations $X = (X/p)\cdot p = (X/p^2)\cdot p^2$, corresponding to an infinite ascending chain $(X) \subset (X/p) \subset (X/p^2)\subset\dots$ . Moreover, all irreducible elements of $Z$ are prime, and the non-constant prime elements are (up to associates) exactly the non-constant polynomials with constant coefficient $1$. Clearly, every non-constant element of $Z$ has at least one, but finitely many prime divisors of this kind. Assume now additionally that $R$ has only finitely many non-associate prime elements. Then every non-constant polynomial in $Z$ has only finitely many prime divisors altogether, and the same holds more obviously for (non-zero) constants. Therefore, $Z$ is a near UFD in this case.
\end{example}

\begin{remark}[a further advantage of near UFDs]In near UFDs, a non-unit is always divisible by a prime. This is not the case in general.
For example $a=6$ in the ring $\mathbb{Z}[\sqrt{-5}]$ does have irreducible divisors but no prime divisors; or $a=2$ in the ring of algebraic integers does not even have any irreducible divisors (the ring has no irreducibles). Pick an element $a\in Z$ as in these two examples. The polynomials $P_1(y)=ay$ and $P_2(y)=a(y+1)$ are coprime in $Q[y]$ and a common prime divisor of all values of $P_1$ and $P_2$ would have to divide $a$ and therefore does not exist. Yet there is no $m\in Z$ such that $P_1(m)$ and $P_2(m)$ are coprime. To avoid such examples, we insist in our assumption (AV) that all elements $P_1(m), \ldots, P_\ell(m)$ with $m\in Z$ be coprime, and not just that no prime divides them all. This subtelty vanishes of course if $Z$ is a near UFD.
\end{remark}

\subsection{Other domains} \label{sec:section 3}
While property \CopSch\hskip 2pt is completely well behaved in the class of Dedekind domains
and, as we will see in \S \ref{ssec:proof-main}, in that of near UFDs, we show in this subsection that 
the behavior inside other classes is rather erratic.  
For example, we produce a non-Noetherian B\'ezout domain\footnote{Recall that a domain is called {\it B\'ezout} if any two elements have a greatest common divisor which is a linear combination of them. Equivalently, the sum of any two principal ideals is a principal ideal.} for which \CopSch\hskip 2pt holds (Proposition \ref{prop:entire_functions}) and another one for which it does not (Remark \ref{rem:FCopSch}). 
We also show that \CopSch\hskip 2pt fails over certain number rings, such as the domain $\Zz[\sqrt{5}]$ (Proposition \ref{prop:Zsqrt5}).

\subsubsection{Non-Noetherian domains satisfying {\rm \CopSch}} 
Proposition \ref{prop:entire_functions} shows a ring that is not a near UFD but satisfies {\rm \CopSch}.
Proposition \ref{prop:Z_p_bar} even produces a domain $Z$ that fulfills \CopSch\hskip 2pt even though $Z$ has non-units not divisible by any prime.

\begin{proposition} \label{prop:entire_functions}
 The ring $Z$ of entire functions is a B\'ezout domain which satisfies {\rm \CopSch}, but is not Noetherian and is not a near UFD.
\end{proposition}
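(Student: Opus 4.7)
The plan is to handle the structural claims first, then turn to \CopSch. That $Z$ is a B\'ezout domain is classical (Helmer, 1940): the units of $Z$ are the nowhere-vanishing entire functions, and its prime elements are, up to associates, exactly the linear functions $z-a$ with $a\in \Cc$. To see that $Z$ is not a near UFD, note that $\sin(\pi z)$ has the infinitely many non-associate prime divisors $(z-n)_{n\in \Zz}$. For non-Noetherianity, use the Weierstrass product theorem to construct, for each $n\geq 1$, an entire function $g_n$ whose zero set is precisely $\{n,n+1,n+2,\dots\}$, with simple zeros. Then $g_n/g_{n+1}$ equals $(z-n)$ up to a unit, so $(g_n)\subsetneq (g_{n+1})$, yielding an infinite strictly ascending chain of principal ideals.

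For \CopSch, fix $P_1(t),\dots,P_\ell(t) \in Z[t]$ satisfying the hypotheses, and write $P_i=\sum_j a_{ij}(z)t^j$. Apply \S\ref{ssec:preliminaries} to obtain a B\'ezout relation $\sum_i V_i(t) P_i(t) = \delta$ with $V_i\in Z[t]$ and $\delta\in Z\setminus\{0\}$. For $a\in\Cc$, set $q_{i,a}(t):=\sum_j a_{ij}(a)t^j \in \Cc[t]$. Since the primes of $Z$ are the $(z-a)$'s, the values $P_1(m),\dots,P_\ell(m)$ are coprime in $Z$ if and only if for every $a\in \Cc$ there is an index $i$ with $q_{i,a}(m(a))\neq 0$. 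Now, for every $a$ outside the zero set of $\delta$, evaluating the B\'ezout relation at $z=a$ yields $\sum_i v_{i,a}(t)q_{i,a}(t)=\delta(a)\neq 0$, so the $q_{i,a}$ are coprime in $\Cc[t]$ and have no common root in $\Cc$. Thus any entire $m$ automatically satisfies the requirement at such $a$.

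It remains to handle the zeros $a_1,a_2,\dots$ of $\delta$, a discrete (at most countable) set. At each $a_k$, assumption (AV) forbids that every $q_{i,a_k}$ vanish identically in $t$: were that the case, $(z-a_k)$ would divide every value $P_i(n) \in Z$, contradicting (AV). Hence the exceptional set $V_k:=\{u\in \Cc : q_{i,a_k}(u)=0 \text{ for all } i\}$ is a finite subset of $\Cc$. Pick $c_k \in \Cc\setminus V_k$ for each $k$, and invoke the Weierstrass--Mittag-Leffler interpolation theorem to produce an entire function $m\in Z$ with $m(a_k)=c_k$ for every $k$. Then $P_1(m),\dots,P_\ell(m)$ share no common zero in $\Cc$, hence no common prime divisor, hence are coprime in $Z$.

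The only non-routine input is the Weierstrass interpolation theorem; it plays here the role that the Chinese Remainder Theorem played in the Dedekind case (\S\ref{ss:ded}). The main obstacle is precisely that $\delta$ may have infinitely many zeros, so classical CRT is unavailable and an analytic ingredient is required; once interpolation is admitted, the argument is a mild elaboration of the Dedekind one.
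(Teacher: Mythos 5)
Your proof is correct, but the \CopSch\hskip 2pt part takes a genuinely different route from the paper's. The paper specializes $t$ only at \emph{constant} functions $m\in\Cc$: it factors each $P_i$ as $d_i\tilde P_i$ with $d_i$ the gcd of the coefficients, covers the zero set $S$ of $\delta$ by the sets $S_i$ of points where $d_i$ does not vanish, and observes that for each $z\in S_i$ the evaluation $x\mapsto z$ of $\tilde P_i$ is a nonzero polynomial with finitely many roots in $t$ --- so the set of constants $m$ for which the $P_i(m)$ acquire a common zero in $S$ is countable, and any $m$ in the (uncountable) complement works. You instead build a genuinely non-constant specialization: you use (AV) pointwise at each zero $a_k$ of $\delta$ to see that the specialized polynomials $q_{i,a_k}$ are not all identically zero, so the bad set $V_k\subset\Cc$ is finite, and you then invoke the Weierstrass--Mittag-Leffler interpolation theorem to prescribe $m(a_k)\notin V_k$. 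Both arguments are sound; the paper's is softer (a pure cardinality count, no interpolation theorem, and it shows that uncountably many \emph{constants} already do the job, bypassing the need for (AV) at individual points via the gcd factorization), while yours is closer in spirit to the Chinese-Remainder argument for Dedekind domains, at the cost of a heavier analytic input. Your explicit treatments of non-Noetherianity (the chain $(g_n)$) and of the failure of the near-UFD property ($\sin(\pi z)$) just make concrete what the paper asserts from the existence of entire functions with infinitely many zeros. One small point of hygiene: when you conclude ``no common zero, hence no common prime divisor, hence coprime,'' the step from primes to arbitrary non-units uses that every non-unit of $Z$ has a zero and is therefore divisible by some $z-a$; this is exactly the fact recorded at the start of the paper's proof, and it is worth stating explicitly since coprimality in $Z$ means the absence of common non-unit divisors, not merely of common prime divisors.
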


\begin{proof}
The ring $Z$ is a B\'ezout domain (see \hbox{e.g.} \cite{Cohn}) whose prime elements, up to multiplication with units, are exactly the linear polynomials $x-c$ ($c\in \mathbb{C}$); indeed, an element of $Z$ is a non-unit if and only if it has a zero in $\mathbb{C}$, and an element with a zero $c$ is divisible by $x-c$ due to Riemann's theorem on removable singularities. In particular, every non-unit of $Z$ has at least one prime divisor. However the set of zeroes of a nonzero entire function may be infinite. This shows that the ring $Z$ is not a near UFD, and is not Noetherian either. Note also that existence of a common prime divisor for a set of elements of $Z$ is equivalent to existence of a common root in $\mathbb{C}$.

Consider now finitely many polynomials $P_1(t), \dots, P_\ell(t)\in Z[t]$ which are coprime in $Q[t]$ and for which no prime of $Z$ divides all values $P_i(m)$ ($i=1,\dots, \ell$; $m\in Z$). For each $P_i$, we may factor out the gcd $d_i$ of all coefficients and write $P_i(t) = d_i\tilde{P_i}(t)$, where the coefficients of $\tilde{P_i}$ are coprime. 
Then $d_1,\dots, d_\ell$ are necessarily coprime (since their common prime divisors would divide all values $P_i(m)$ ($i=1,\dots, \ell$; $m\in Z$)). 
We now consider specialization of $\tilde{P_1},\ldots, \tilde{P_\ell}$ at {\it constant} functions $m\in \mathbb{C}$. 

Recall that by Lemma \ref{lemma:1}, there exists a nonzero $\delta \in Z$ such that for every $m\in Z$, and in particular for every $m\in \mathbb{C}$, every common divisor  of $P_1(m),\dots, P_\ell(m)$ is a divisor of $\delta$. The entire function $\delta$ has a countable set $S$ of zeroes. To prove {\rm \CopSch}, it suffices to find $m\in \mathbb{C}$ for which no $z\in S$ is a zero of all of $P_1(m),\ldots,P_\ell(m)$. 
For $i\in \{1,\dots, \ell\}$, denote by $S_i$ the set of all $z\in S$ which are not a root of $d_i$. Since $d_1,\dots, d_\ell$ are coprime, one has $\cup_{i=1}^\ell S_i = S$.
Now fix $i$ for the moment, and write $\tilde{P_i}(t) = \sum_{j=0}^k (\sum_{k=0}^\infty a_{jk} x^k) t^j$ with $a_{jk}\in \mathbb{C}$, via power series expansion of the coefficients of $\tilde{P_i}$.
Let $z\in S_i$. Since $z$ is not a root of all coefficients of $\tilde{P_i}$, evaluation $x\mapsto z$ yields a nonzero polynomial, which thus has a root at only finitely many values $t\mapsto m\in \mathbb{C}$. This is true for all $z\in S_i$, whence the set of $m\in \mathbb{C}$ such that $P_i(m) = d_i\tilde{P_i}(m)$ has a root at {\it some} $z\in S_i$ is a countable set. In total, the set of all $m\in \mathbb{C}$ such that ${P_1}(m),\dots, {P_\ell}(m)$ have a common root in $S$ (and hence in some $S_i$) is countable as well. Choose $m$ in the complement of this set to obtain the assertion.
\end{proof}

\begin{proposition} \label{prop:Z_p_bar}
Let $Z= \overline{\Zz_p}$ be the integral closure of $\mathbb{Z}_p$ in $\overline{\mathbb{Q}_p}$. Then $Z$ has non-units that are not divisible by any prime and satisfies {\rm \CopSch}.  
\end{proposition}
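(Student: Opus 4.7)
The plan exploits the valuation-ring structure of $Z = \overline{\Zz_p}$. Let $v$ denote the unique extension of the $p$-adic valuation to $\overline{\Qq_p}$, so that $Z = \{x : v(x)\ge 0\}$, with maximal ideal $\mathfrak{m} = \{x : v(x)>0\}$, value group $\Qq$, and residue field $Z/\mathfrak{m} = \overline{\Ff_p}$. Divisibility in $Z$ is governed entirely by $v$: $a \mid b$ iff $v(a) \le v(b)$. In particular, a family $a_1, \ldots, a_\ell$ is coprime in $Z$ iff $\min_i v(a_i) = 0$, i.e.\ iff at least one $a_i$ is a unit.

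For the first claim, the only prime ideals of $Z$ are $(0)$ and $\mathfrak{m}$, and $\mathfrak{m}$ is \emph{not} principal: if $\mathfrak{m} = (a)$ with $v(a) = q > 0$, divisibility of $\Qq$ yields $b \in Z$ with $0 < v(b) < q$, contradicting $b \in (a)$. Hence $Z$ contains no prime element, so every non-unit (every element of $\mathfrak{m} \setminus \{0\}$) is divisible by no prime.

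For \CopSch, let $P_1, \ldots, P_\ell \in Z[t]$ satisfy the hypotheses, and for each $i$ let $c_i$ be the minimum valuation of a coefficient of $P_i$. Since $v(z) \ge 0$ for every $z \in Z$, one has $v(P_i(z)) \ge c_i$ for every $z$. Assumption {\rm (AV)} translates to $\inf_{i,z} v(P_i(z)) = 0$ (otherwise any rational $0 < q$ below this infimum provides a non-unit $a$ with $v(a) = q$ dividing every value); together with the bound just noted, this forces $\min_i c_i = 0$. Relabel so that $c_1 = 0$; then some coefficient of $P_1$ is a unit, and the reduction $\overline{P_1} \in \overline{\Ff_p}[t]$ is a nonzero polynomial.

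To conclude, since $\overline{\Ff_p}$ is infinite, pick $\overline{m} \in \overline{\Ff_p}$ that is not a root of $\overline{P_1}$ and lift to any $m \in Z$ reducing to $\overline{m}$ modulo $\mathfrak{m}$. Then $v(P_1(m)) = 0$, so $P_1(m) \in Z^\times$, and therefore $P_1(m), \ldots, P_\ell(m)$ are trivially coprime in $Z$. The delicate step is the implication $\inf_{i,z} v(P_i(z)) = 0 \Rightarrow \min_i c_i = 0$: because $\Qq$ is divisible, {\rm (AV)} is a priori only an infimum condition that need not be realized at any specific $z \in Z$, and one must replace it by the coarser lower bound involving the $c_i$, which turns the infimum into a true minimum over finitely many coefficient valuations.
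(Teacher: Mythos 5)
Your proof is correct, and it reaches the same intermediate waypoints as the paper's argument: no prime elements exist because the value group $\Qq$ has no least positive element (the paper phrases this via the finitely generated ideals $p^rZ$), and assumption (AV) forces some coefficient of some $P_i$ to be a unit because the ideals of a valuation ring are totally ordered (your valuation-theoretic reformulation via $\min_i c_i=0$ is the same fact). Where you genuinely diverge is the final, decisive step of producing $m$ with $P_1(m)\in Z^\times$. The paper chooses a unit $u$ and applies the theory of Newton polygons to $P_1(t)-u$ to exhibit a root of non-negative valuation, i.e.\ an $m\in Z$ with $P_1(m)=u$; this exploits the algebraic closedness of $\overline{\Qq_p}$. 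You instead reduce $P_1$ modulo the maximal ideal $\mathfrak{m}$, observe that $\overline{P_1}$ is a nonzero polynomial over the \emph{infinite} residue field $\overline{\Ff_p}$, and lift a non-root. Your route is more elementary (no Newton polygons, no roots in $\overline{\Qq_p}$) and would transfer to any valuation ring with infinite residue field, whereas the paper's argument trades the residue-field condition for algebraic closedness of the fraction field. Both are complete proofs of the statement; your closing remark correctly isolates the one point needing care, namely that (AV) is only an infimum condition on $v(P_i(z))$ and must be converted into the attained minimum $\min_i c_i$ over the finitely many coefficient valuations.
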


\begin{proof} The domain $Z$ is a (non-Noetherian) valuation ring, whose nonzero finitely 
generated ideals are exactly the principal ideals $p^rZ$ with $r$ a non-negative rational number. 
Prime elements do not exist in $Z$, whence the first part of the assertion.

Now take finitely many nonzero polynomials $P_1(t),\dots, P_\ell(t)\in Z[t]$, coprime in $\overline{\mathbb{Q}_p}[t]$, and assume that all values $P_i(m)$ ($i=1,\dots, \ell$; $m\in Z$) are coprime. Then the coefficients of $P_1,\dots, P_\ell$ must be coprime, and since $Z$ is a valuation ring (\hbox{i.e.}, its ideals are totally ordered by inclusion), one of these coefficients, say the coefficient of $t^d$, $d\ge 0$, of the polynomial $P_1$, must be a unit. We will proceed to show that there exists $m\in Z$ such that $P_1(m)$ is a unit, which will clearly prove \CopSch. To that end we draw the Newton polygon (with respect to the $p$-adic valuation) of the polynomial $P_1(t)-u$, with $u\in Z$ a unit to be specified. Since any non-increasing slope in this Newton polygon corresponds to a set of roots of $P_1(t)-u$ of non-negative valuation (\hbox{i.e.} roots contained in $Z$), it suffices to choose $u$ such that there exists at least one segment of non-increasing slope (see, \hbox{e.g.} Proposition II.6.3 in \cite{neukirch-book} for the aforementioned property of the Newton polygon). This is trivially the case if $d>0$, so we may assume that the constant coefficient of $P_1$ is the only one of valuation $0$. But then simply choose $u=P_1(0)$ and $m=0$.
\end{proof}

\subsubsection{Rings not satisfying {\rm \CopSch}} \label{ssec:Z[sqrt5]}

\begin{proposition}
\label{prop:local}
Let $Z$ be a domain and $p,q$ be non-associate irreducible elements of $Z$ such that $Z/(pZ \cap qZ)$ is a finite local ring.
Then  {\rm \CopSch}\hskip 2pt fails for $Z$.
\end{proposition}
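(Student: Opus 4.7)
The plan is to exhibit an explicit pair of polynomials $P_1, P_2 \in Z[t]$ that are coprime in $Q[t]$ and satisfy (AV), yet for which $P_1(m), P_2(m)$ admit a common non-unit divisor for every $m \in Z$. Set $R := Z/(pZ \cap qZ)$, finite and local by hypothesis, with maximal ideal $\mathfrak{m}_R$. The starting observation is that $pq$ lies in $pZ \cap qZ$, so the reductions $\bar p, \bar q$ satisfy $\bar p\,\bar q = 0$ in $R$; in particular both are non-units, hence both lie in $\mathfrak{m}_R$.

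From the finite local ring $R$ I extract two natural invariants: the exponent $e \geq 1$ of the finite abelian group $R^\times$ (so that $\bar m^e = 1$ whenever $\bar m \in R^\times$), and the nilpotency index $k \geq 1$ of $\mathfrak{m}_R$, which exists because $R$ is Artinian local and hence its maximal ideal is nilpotent (so $\bar m^k = 0$ whenever $\bar m \in \mathfrak{m}_R$). With these I set
\[
P_1(t) := p\,(t^e - 1), \qquad P_2(t) := q\, t^k,
\]
both in $Z[t]$.

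Three verifications then close the argument. Coprimality in $Q[t]$ is clear, since $P_1(0) = -p \neq 0$ while $0$ is the only root of $P_2$. The failure of the coprime Schinzel conclusion follows from a dichotomy on $\bar m \in R$: if $\bar m \in R^\times$, then $m^e - 1 \in pZ \cap qZ \subseteq qZ$, so $q$ divides $P_1(m)$, and trivially $q$ divides $P_2(m)$; if $\bar m \in \mathfrak{m}_R$, then $m^k \in pZ \cap qZ \subseteq pZ$, so $p$ divides $P_2(m)$, and trivially $p$ divides $P_1(m)$. Since $R = R^\times \cup \mathfrak{m}_R$ (as $R$ is local), no $m \in Z$ yields coprime values. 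Finally, (AV) holds: any common non-unit divisor of all values $P_1(m), P_2(m)$ must in particular divide $P_1(0) = -p$ and $P_2(1) = q$, and by irreducibility of $p$ and $q$ such a divisor would be associate to both, contradicting non-associateness.

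The only real subtlety is spotting the two invariants $e$ and $k$ and arranging the polynomials so that a single one-parameter family $m \mapsto (P_1(m), P_2(m))$ uniformly handles the two divisibility regimes $R^\times$ and $\mathfrak{m}_R$. Once these are in place the rest is formal; the finite local hypothesis on $Z/(pZ \cap qZ)$ is what simultaneously supplies the finite exponent and the nilpotent maximal ideal required for the construction.
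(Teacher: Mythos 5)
Your proof is correct and rests on the same mechanism as the paper's: the dichotomy $R=R^\times\sqcup\mathfrak m_R$ in the finite local quotient, with power maps sending units to $1$ and elements of the maximal ideal to $0$. The only real difference is that you write the explicit polynomials $p(t^e-1)$ and $q\,t^k$ directly in $Z[t]$, which makes coprimality in $Q[t]$ and assumption (AV) immediate, whereas the paper routes the construction through the general two-valued-polynomial statement of Lemma \ref{lem:twovalues} and then has to adjust constant terms in $J$ to secure (AV); your opening observation that $\bar p,\bar q$ are non-units is never actually used (and as justified would also need the non-associateness hypothesis), but this does not affect the argument.
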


The proof rests on the following elementary fact.
\begin{lemma}
\label{lem:twovalues}
Let $R$ be a finite local ring and let $a,b\in R$ be two distinct elements. Then there exists a polynomial $f\in R[t]$ taking the value $a$ exactly on the units of $R$, and the value $b$ everywhere else.
\end{lemma}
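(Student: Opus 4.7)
The plan is to construct $f$ as a linear rescaling of a single power $t^e$ where $e$ is chosen so that $t^e$ acts as the characteristic function of the units: namely, $u^e=1$ for every $u\in R^\times$ and $x^e=0$ for every $x\in\mathfrak m$, where $\mathfrak m$ denotes the maximal ideal. Once such an $e$ is in hand, we simply set
\[
f(t) \;=\; (a-b)\,t^e + b,
\]
and the two required values fall out immediately: $f(u)=(a-b)\cdot 1 + b = a$ for units $u$, and $f(x)=(a-b)\cdot 0 + b = b$ for $x\in\mathfrak m$.

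The only work is producing the exponent $e$, which splits into two independent observations. First, since $R$ is finite, the group of units $R^\times$ is finite, so $u^M=1$ for every $u\in R^\times$, where $M=|R^\times|$ (or the exponent of $R^\times$). Second, since $R$ is finite and local, it is Artinian, and in an Artinian local ring the maximal ideal is nilpotent; thus there is an integer $N\ge 1$ with $\mathfrak m^N=0$, and in particular $x^N=0$ for every $x\in\mathfrak m$. Now pick any $e$ which is both a multiple of $M$ and at least $N$ (for instance $e=MN$); then $u^e=1$ for every unit and $x^e=0$ for every non-unit, as required.

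There is no real obstacle: the argument is a direct consequence of the finiteness of $R^\times$ and the nilpotence of $\mathfrak m$. The only point that deserves a brief justification in the write-up is the nilpotence of $\mathfrak m$, which can either be cited from the general fact that the Jacobson radical of an Artinian ring is nilpotent, or proved in two lines by noting that the descending chain $\mathfrak m\supseteq\mathfrak m^2\supseteq\cdots$ must stabilize (as $R$ is finite), say at $\mathfrak m^N=\mathfrak m^{N+1}$, whence $\mathfrak m^N=0$ by Nakayama's lemma applied to the finitely generated $R$-module $\mathfrak m^N$ over the local ring $R$.
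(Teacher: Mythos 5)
Your proof is correct and follows essentially the same route as the paper's: both construct $f(t)=(a-b)t^e+b$ where $e$ is a sufficiently large multiple of $|R^\times|$ so that $t^e$ is $1$ on units and $0$ on the (nilpotent) non-units. The only cosmetic difference is that you justify the vanishing on $\mathfrak m$ via nilpotence of the ideal $\mathfrak m$ itself, while the paper notes directly that every non-unit element is nilpotent; both are immediate for a finite local ring.
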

\begin{proof}
Start with $a=1$ and $b=0$.  The unique maximal ideal of a finite local ring is necessarily the nilradical, \hbox{i.e.}, every non-unit of $R$ is nilpotent. In particular, there exists $n\in \mathbb{N}$ (\hbox{e.g.} any sufficiently large multiple of $|R^\times|$) such that $r^n=0$ for all nilpotent elements $r\in R$, and $r^n=1$ for all units $r$.
Setting $f(t)=t^n$ finishes the proof for $a=1$, $b=0$. The general case then follows by simply setting $f(t)=(a-b)t^n+b$.
\end{proof}

\begin{proof}[Proof of Proposition \ref{prop:local}]
Set $J=pZ \cap qZ$. As already used above, locality of $Z/J$ implies that all non-units of $Z/J$ are nilpotent. In particular, $p,q\notin J$, but there exist $m,n\in \mathbb{N}$ such that $p^m \in J$ and $q^n\in J$. By Lemma \ref{lem:twovalues}, there exist polynomials $P_1, P_2\in Z[t]$ such that $P_1(z)\in \begin{cases}p+J, \ \text{ for } z+J\in (Z/J)^\times \\ J, \ \text{ otherwise } \end{cases}$, and $P_2(z)\in \begin{cases} J, \ \text{ for } z+J\in (Z/J)^\times \\ q+J, \ \text{ otherwise } \end{cases}$. By adding suitable constant terms in $J$ to $P_1$ and $P_2$, one may additionally demand that $p\in P_1(Z)$ and $q\in P_2(Z)$. In particular, $P_1$ and $P_2$ satisfy assumption (AV).
However, by construction, $p$ divides both $P_1(z)$ and $P_2(z)$ for all $z$ such that $z+J\in (Z/J)^\times$, and $q$ divides $P_1(z)$ and $P_2(z)$ for all other $z$, showing that \CopSch\hskip 2pt is not satisfied.
\end{proof}

\begin{proposition} \label{prop:Zsqrt5}
 {\rm \CopSch}\hskip 2pt does not hold for $Z=\mathbb{Z}[\sqrt{5}]$.
\end{proposition}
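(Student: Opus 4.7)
The plan is to invoke Proposition \ref{prop:local} with the irreducible elements $p = 2$ and $q = \sqrt{5}-1$ of $Z = \mathbb{Z}[\sqrt{5}]$. The proof reduces to verifying three facts: (i) $p$ and $q$ are non-associate irreducibles, (ii) the ideal $J = 2Z \cap (\sqrt{5}-1)Z$ has finite index in $Z$, and (iii) $Z/J$ is local.

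For (i), I would use the norm form $N(a+b\sqrt{5}) = a^2 - 5b^2$. One has $N(2) = 4$ and $N(\sqrt{5}-1) = -4$. A nontrivial factorization of either element would force an element of norm $\pm 2$, but reducing $a^2 - 5b^2 \equiv \pm 2 \pmod 5$ leaves $a^2 \equiv \pm 2 \pmod 5$, impossible since the nonzero squares mod $5$ are $1$ and $4$. Thus both elements are irreducible. They are not associate because $2/(\sqrt{5}-1) = (\sqrt{5}+1)/2$ does not lie in $\mathbb{Z}[\sqrt{5}]$.

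The key computation, and the main technical step, is to identify $J$ as the square of the maximal ideal $M = (2, \sqrt{5}-1)$. First, $Z/2Z \cong \mathbb{F}_2[x]/(x^2-1) = \mathbb{F}_2[x]/(x-1)^2$, so $M/2Z$ is the unique maximal ideal of $Z/2Z$, and $Z/M \cong \mathbb{F}_2$; in particular $M$ is maximal. Next I would check $M^2 = J$. The inclusion $M^2 \subseteq J$ follows from the identities $4 = (\sqrt{5}-1)(\sqrt{5}+1)$, $2(\sqrt{5}-1) \in 2Z \cap (\sqrt{5}-1)Z$, and $(\sqrt{5}-1)^2 = 2(3-\sqrt{5})$, so each generator of $M^2$ lies in $J$. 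For the reverse inclusion, if $x = 2a = (\sqrt{5}-1)b \in J$, then $(\sqrt{5}-1) \cdot \bar{b} = 0$ in $Z/2Z = \mathbb{F}_2[x]/(x-1)^2$; since the annihilator of the class of $\sqrt{5}-1$ is exactly the maximal ideal $M/2Z$, we get $b \in M$, whence $x = (\sqrt{5}-1)b \in (\sqrt{5}-1)M \subseteq M^2$.

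With $J = M^2$, the ring $Z/J$ is finite (since $M^2$ has finite index; in fact one computes $|Z/J| = 8$) and local, because $M$ is the unique maximal ideal of $Z$ containing $M^2$. All hypotheses of Proposition \ref{prop:local} are then met, and \CopSch{} fails for $\mathbb{Z}[\sqrt{5}]$. The only step requiring genuine care is the identification $J = M^2$; once this is in hand, the conclusion is immediate.
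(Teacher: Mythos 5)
Your proof is correct and follows the same strategy as the paper: reduce to Proposition \ref{prop:local} with the non-associate irreducibles $2$ and $\sqrt{5}\pm 1$ and verify that the quotient by $J=2Z\cap(\sqrt{5}\mp 1)Z$ is a finite local ring. The only (minor) difference is in how locality is checked — the paper lists the eight coset representatives and identifies the nilpotents directly, whereas you identify $J$ with $M^2$ for the maximal ideal $M=(2,\sqrt{5}-1)$, which is a clean and equally valid route.
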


\begin{proof}
Set $\sigma=\sqrt{5}+1$, so that $Z=\mathbb{Z}[\sigma]$. Note the factorization $2\cdot 2 = \sigma(\sigma-2)$ in $Z$, in which $2$ and $\sigma$ are non-associate irreducible elements. Due to Proposition \ref{prop:local}, it suffices to verify that $Z/(2Z\cap \sigma Z)$ is a local ring. However, since $4, 2\sigma$ and $\sigma^2$ are all in $2Z\cap \sigma Z$, the set $\{0,1,2,3, \sigma,\sigma+1,\sigma+2,\sigma+3\}$ is a full set of coset representatives, and the non-units (\hbox{i.e.} $0,2,\sigma,\sigma+2$) are exactly the nilpotents in $Z/(2Z\cap \sigma Z)$. These therefore form the unique maximal ideal, ending the proof.
\end{proof}

\begin{remark}[The {\it finite} coprime Schinzel Hypothesis] \label{rem:FCopSch}
(a) The proof above of Proposition \ref{prop:Zsqrt5} even shows that a weaker variant of \CopSch\hskip 2pt fails over $\mathbb{Z}[\sqrt{5}]$, namely the variant, say (FinCopSch), for which the exact same conclusion holds but under the following stronger assumption on values:
\vskip 0,5mm

\noindent
(Fin-AV) {\it The set of values $\{P_i(m) \hskip 2pt | \hskip 2pt m\in Z, i=1,\ldots,\ell \}$ contains a finite subset
whose elements are coprime in $Z$}.

\vskip 1mm

\noindent
(b) Here is an example of a domain fulfilling (FinCopSch) but not \CopSch.
Consider the domain $Z=\mathbb{Z}_p[\hskip 1pt p^{p^{-n}}\mid n\in \mathbb{N}]$. 
This is a (non-Noetherian) valuation ring, and in particular a B\'ezout domain (the finitely generated non-trivial ideals are exactly the principal ideals $(p^{m/p^n})$ with $m,n\in \mathbb{N}$).

Property \CopSch\hskip 2pt does not hold for $Z$. Take $P_1(t) = pt$ and $P_2(t) = t^p-t+p$. These are coprime in $Q[t]$. Furthermore $P_1$ and $P_2$ satisfy assumption (AV)
--- indeed, any common divisor certainly divides $p=P_1(1)$ as well as $m(m^{p-1}-1)$ for every $m\in Z$; choosing $m$ in the sequence $(p^{p^{-n}})_{n\in \mathbb{N}}$ shows the claim.
But note that every $m\in Z$ lies inside some ring $Z_0=\Zz_p[p^{p^{-n}}]$ (for a suitable $n$), and the unique maximal ideal of that ring has residue field $\mathbb{F}_p$, meaning that it necessarily contains $m(m^{p-1}-1)$. 
Thus $m(m^{p-1}-1)$ is at least divisible by $p^{p^{-n}}$, which is thus a common divisor of $P_1(m)$ and $P_2(m)$.

On the other hand, $Z$ fulfills property (FinCopSch). Indeed, if $P_1(t),\dots, P_\ell(t)$ are polynomials in $Z[t]$ for which finitely many elements $P_i(m)$ with $m\in Z, i\in \{1,\dots, \ell\}$
exist that are coprime, then automatically one of those values must be a unit (since any finite set of non-units has a suitably high root of $p$ as a common divisor!), yielding an $m$ for which $P_1(m),\dots, P_\ell(m)$ are coprime. 
\end{remark}

\subsection{A UFD not satisfying the gcd stability property} 
\label{ssec:gcd_stable}

Recall, for $Z=\Zz$, the following result from \cite{BDN20} already mentioned in the introduction.
\begin{theorem}
\label{thm:gcdstable}
Let $P_1,\ldots,P_\ell \in Z[t]$ be $\ell\geq 2$ nonzero po\-ly\-no\-mials, coprime in 
$Q[t]$. Then the set ${\mathcal D}=\big\{ \gcd (P_1(m),\ldots,P_\ell(m)) \mid m\in Z \big\}$
is finite and stable under gcd.
\end{theorem}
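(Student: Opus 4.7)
The plan is to treat the two assertions separately, with Lemma \ref{lemma:1} as the main ingredient in both. Set $g(m) = \gcd(P_1(m),\ldots,P_\ell(m))$ for $m \in Z$, and let $\delta \in Z$ be the nonzero element produced by Lemma \ref{lemma:1}. Finiteness is then immediate: Lemma \ref{lemma:1} forces $g(m) \mid \delta$ for every $m$, so $\mathcal{D}$ sits inside the set of divisors of $\delta$, which is finite (up to sign) since $Z = \Zz$.

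For stability under gcd, I would fix $d_1 = g(m_1)$ and $d_2 = g(m_2)$ in $\mathcal{D}$, set $d = \gcd(d_1, d_2)$, and aim to build $m \in Z$ with $g(m) = d$ via a Chinese Remainder argument prime by prime. For each prime $p$ dividing $\delta$, put $e_p = v_p(\delta)$ and pick $n_p \in \{m_1, m_2\}$ realising the minimum $v_p(g(n_p)) = \min(v_p(d_1), v_p(d_2)) = v_p(d)$; then CRT furnishes an $m \in Z$ with $m \equiv n_p \pmod{p^{e_p+1}}$ for every $p \mid \delta$. The key verification is that $v_p(g(m)) = v_p(d)$ for each such $p$: writing $a_p = v_p(d) \leq e_p$, there is an index $i_0$ with $v_p(P_{i_0}(n_p)) = a_p$, and the congruence $m \equiv n_p \pmod{p^{e_p+1}}$ forces $v_p(P_{i_0}(m)) = a_p$ since $a_p < e_p+1$; for the remaining indices $i$ one has $v_p(P_i(m)) \geq \min(v_p(P_i(n_p)), e_p+1) \geq a_p$, whence $\min_i v_p(P_i(m)) = a_p$. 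Primes $p \nmid \delta$ contribute $0$ to $v_p(g(m))$ by Lemma \ref{lemma:1}, so $g(m) = d$ up to sign, as required.

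The main obstacle, conceptually, is choosing the modulus $p^{e_p+1}$ rather than $p^{e_p}$: individual values $P_i(n_p)$ can have arbitrarily large $p$-adic valuation while $g(n_p)$ is capped by $e_p$, and passing to one exponent higher is what both pins down which index achieves the minimum at $n_p$ and transports that minimum unchanged to $m$. Once this is set up, everything else (finiteness, CRT, polynomial congruences) is routine. The argument goes through verbatim over any PID, and should adapt to any Dedekind domain by working with prime ideals in place of primes, along the lines of the proof in \S \ref{ss:ded}.
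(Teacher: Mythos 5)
Your argument is correct. A point of context first: the paper does not actually prove Theorem \ref{thm:gcdstable} --- it is recalled from \cite{BDN20} with the remark that the proof there, written for $Z=\Zz$, goes through for any PID --- so the only in-paper material to compare against is the Dedekind-domain argument of \S\ref{ss:ded} and Proposition \ref{rem:plus}. Your proof matches that material closely. Finiteness is exactly the first assertion of Lemma \ref{lemma:1} (every common divisor of the $P_i(m)$ divides $\delta$, which has finitely many divisors up to units in a PID). For stability, your choice of modulus $p^{e_p+1}$ with $e_p=v_p(\delta)$ is the right one and the verification is sound: since $a_p=v_p(g(n_p))\le e_p$, the congruence $m\equiv n_p \pmod{p^{e_p+1}}$ forces $v_p(P_{i_0}(m))=a_p$ for an index realising the minimum and gives $v_p(P_i(m))\ge a_p$ for the rest, while primes not dividing $\delta$ cannot occur in any $g(m)$. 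This ``one exponent higher'' device is precisely the mechanism the paper uses in \S\ref{ss:ded}, where the ideals $\mathcal{Q}_i^{g_i+1}$ play the role of your $p^{e_p+1}$. The only step you leave implicit is that each $g(m)$ is nonzero (equivalently, the $P_i(m)$ are never all zero), which follows from the B\'ezout relation $\sum_i V_i(m)P_i(m)=\delta\ne 0$, so the valuations you manipulate are finite for at least one index; with that noted, the proof is complete, and your closing remarks about PIDs and the Dedekind analogue are accurate.
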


The proof is given for $Z=\Zz$ in \cite{BDN20} but is valid for any PID.
Theorem \ref{thm:gcdstable} implies property \CopSch.
Indeed, asssumption (AV) exactly means that the gcd of elements of ${\mathcal D}$ is $1$.  
By Theorem \ref{thm:gcdstable},  $\mathcal{D}$ is finite and stable by gcd. Therefore $1 \in \mathcal{D}$, \hbox{i.e.} there exists $m\in Z$ such that $P_1(m),\ldots,P_\ell(m)$ are coprime.
The stability property however cannot be extended to all UFDs.

\begin{example}[a counter-example to Theorem \ref{thm:gcdstable} for the UFD \hbox{$Z=\Zz[x,y,z]$}]
Let 
$$P_1(t) = \big(x^2y^2z + t^2\big)\big(x^2yz^2 + (t-1)^2\big) \in Z[t]$$
$$P_2(t) = \big(xy^2z^2 + t^2\big)\big(x^2y^2z^2 + (t-1)^2\big) \in Z[t]$$

\noindent
These nonzero polynomials are coprime in $Q[t]$: they have no common root in $\overline Q$.

We prove next that the set $\mathcal{D} = \{ \gcd(P_1(m),P_2(m)) \mid m \in Z\}$
is not stable by gcd. Set
\vskip 1mm

\centerline{$
\left\{\begin{matrix}
& d_0 = \gcd( P_1(0), P_2(0) ) = \gcd( x^2y^2z, xy^2z^2 ) = xy^2z \hfill \\
& d_1 = \gcd( P_1(1), P_2(1) ) = \gcd( x^2yz^2, x^2y^2z^2 ) = x^2yz^2 \hfill \\
\end{matrix}\right.
$}
\vskip 1mm

\noindent
and $d = \gcd(d_0,d_1) = xyz$. We prove below that $d \notin \mathcal{D}$.

By contradiction, assume that $d= \gcd (P_1(m),P_2(m))$ for some $m = m(x,y,z) \in Z$.
As $xyz | P_1(m)$ it follows that $xyz | \big(x^2y^2z + m^2\big)\big(x^2yz^2 + (m-1)^2\big)$
whence $xyz | m^2(m-1)^2$ and $xyz | m(m-1)$.
We claim that the last two divisibilities imply that $xyz | m$ or $xyz | m-1$.

Namely, if for instance we had $xy|m$ and $z|m-1$ then, on the one hand, we would have 
$m =xy \hskip 1pt m^\prime$ for some $m^\prime \in Z$ and so $m(0,0,0)=0$, but
on the other hand, we would have $m(x,y,z)-1=z \hskip 1pt m^{\prime\prime}$ for some $m^{\prime\prime}\in Z$ and so $m(0,0,0)=1$. Whence the claim.

Now if $xyz | m$, then $x^2y^2z^2 | m^2$. But then $x^2y^2z | P_1(m)$ and $xy^2z^2 | P_2(m)$, whence $xy^2z | d$, a contradiction. 
The other case for which $xyz | m-1$ is handled similarly.
\end{example}

 \section{The coprime Schinzel Hypothesis - general case}  \label{ssec:proof-main}
 
 The fully general coprime Schinzel Hypothesis and the almost equivalent Primitive Specialization 
 Hypothesis are introduced in \S \ref{ssec:generalization}. Our main result on them, Theorem \ref{thm:main_theorem_gen}, is stated in \S \ref{ssec:main-conclusions}.
Sections \ref {ssec:chinese} and \ref{ssec:reduction} show three lemmas needed for the proof, which is given in \S \ref{ssec:proof_k=1}.
We start in \S \ref{ssec:two_Schinzel} with some observations on fixed divisors. 
\vskip 1mm

Fix an arbitrary integral domain $Z$. 
 
 \subsection{Fixed divisors} \label{ssec:two_Schinzel} 
We refer to \S \ref{sec:intro} for the definition of ``fixed divisor \hbox{w.r.t.} $\underline t$'' of a polynomial $P(\underline t,\underline y)$ and for the associated notation
${\mathcal F}_{\underline t}(P)$.

\begin{lemma} \label{lem:prelim-proof-R[u]} Let $Z$ be an integral domain.
\vskip 0,5mm

\noindent
{\rm (a)} Let $P\in Z[\underline t,\underline y]$ be a nonzero polynomial and $p$ be a prime of $Z$ not dividing $P$. If $p$ is in the set ${\mathcal F}_{\underline t}(P)$ of fixed divisors of $P$, it is of norm $|Z/pZ| \leq \max_{i=1,\ldots,k} \deg_{t_i}(P)$.
\vskip 0,5mm

\noindent
{\rm (b)} Assume that $Z$ is a near UFD and every prime of $Z$ is of infinite norm. Then, for any polynomials $P_1(\underline t,\underline y),\ldots, P_s(\underline t,\underline y)\in Z[\underline t,\underline y]$, primitive \hbox{w.r.t.} $Z$, we have ${\mathcal F}_{\underline t}(P_1\cdots P_s)=\emptyset$.

\vskip 0,5mm

\noindent
{\rm (c)} If $Z=R[u_1,\ldots,u_r]$ is a polynomial ring over an integral domain $R$, and if either 
$R$ is infinite or $r\geq 2$, then every prime $p\in Z$ is of infinite norm. The
same conclusion holds if $Z$ is an integral domain containing an infinite field.
\end{lemma}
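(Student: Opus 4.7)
The plan is to handle the three parts in order, with the real content concentrated in part (a) and the other two following quickly.

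For part (a), I would reduce modulo $p$. Write $P = \sum_\alpha c_\alpha(\underline t)\, \underline y^\alpha$ with $c_\alpha \in Z[\underline t]$. The assumption that $p \mid P(\underline m,\underline y)$ for every $\underline m \in Z^k$ amounts to $c_\alpha(\underline m) \equiv 0 \pmod p$ for every multi-index $\alpha$ and every $\underline m \in Z^k$; equivalently, the reduction $\overline{c_\alpha} \in (Z/pZ)[\underline t]$ vanishes identically on $(Z/pZ)^k$. Since $p \nmid P$, at least one $\overline{c_\alpha}$ is nonzero. I would then invoke the standard fact (a straightforward induction on $k$ reducing to the univariate case, where a degree-$d$ polynomial cannot have more than $d$ roots in a field) that a nonzero polynomial in $F[t_1,\ldots,t_k]$ over a field $F$ with $\deg_{t_i} < |F|$ for all $i$ cannot vanish on all of $F^k$. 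Applied to $\overline{c_\alpha}$, this forces $\deg_{t_i}(c_\alpha) \ge |Z/pZ|$ for some $i$, and since $\deg_{t_i}(c_\alpha) \le \deg_{t_i}(P)$, the bound follows.

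For part (b), I argue by contradiction: suppose $a \in \mathcal{F}_{\underline t}(P_1\cdots P_s)$. Since $Z$ is a near UFD, the non-unit $a$ admits a prime divisor $p$, which is then itself a fixed divisor of the product. The infinite-norm hypothesis contradicts the bound in (a) unless $p \mid P_1\cdots P_s$ in $Z[\underline t,\underline y]$. But $p$ remains prime in $Z[\underline t,\underline y]$, since $Z[\underline t,\underline y]/pZ[\underline t,\underline y] \cong (Z/pZ)[\underline t,\underline y]$ is a domain. Hence $p$ divides some individual $P_i$, contradicting the primitivity of $P_i$ \hbox{w.r.t.} $Z$.

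For part (c), the plan is a short case split exhibiting an infinite subring of $Z/pZ$ for every prime $p \in Z = R[u_1,\ldots,u_r]$. If $p \in R$, then $Z/pZ \cong (R/pR)[u_1,\ldots,u_r]$ is a polynomial ring over a domain in $r \ge 1$ variables, hence infinite. Otherwise $p$ has positive degree in some $u_i$, say $u_1$; comparing $u_1$-degrees then yields $pZ \cap R[u_2,\ldots,u_r] = \{0\}$, providing an injection $R[u_2,\ldots,u_r] \hookrightarrow Z/pZ$. This source is infinite whenever $R$ is infinite, or whenever $r \ge 2$ (polynomial ring in $\ge 1$ variables over a domain). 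Finally, if $Z$ merely contains an infinite field $F$, the composite $F \hookrightarrow Z \twoheadrightarrow Z/pZ$ is injective because $pZ$ is proper and $F$ is a field, so $Z/pZ$ is infinite. The only substantive ingredient is the polynomial-identity lemma used in (a); everything else is bookkeeping with factorization, primitivity, and degrees.
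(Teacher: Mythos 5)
Your proof is correct and follows essentially the same route as the paper: part (a) via the standard polynomial-identity (Schwartz--Zippel type) lemma applied to the nonzero reduced coefficients, part (b) by passing from a non-unit fixed divisor to a prime one and invoking (a) together with primitivity, and part (c) by exhibiting an infinite subring injecting into $Z/pZ$ through a degree comparison. The only cosmetic point is that $Z/pZ$ is a domain rather than a field, so you should state the identity lemma for a grid $S_1\times\cdots\times S_k$ in a domain (or pass to the fraction field while keeping the evaluation points in $Z/pZ$); the induction you describe proves exactly that version.
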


On the other hand, $\Zz$ and $\Ff_q[x]$ are typical examples of rings that have primes of finite norm. As already noted, (b) is in fact false for these two rings.

\begin{proof}
(a) is classical. If $p$ is a prime of $Z$ such that $P$ is nonzero modulo $p$ and $|Z/pZ| > \max_{i=1,\ldots,k} \deg_{t_i}(P)$, there exists $\underline m \in Z^k$ such that $P(\underline m,\underline y) \not\equiv 0 \pmod{p}$, \hbox{i.e.} $p\notin {\mathcal F}_{\underline t}(P)$.
\vskip 1mm

\noindent
(b) Assume that the set ${\mathcal F}_{\underline t}(P_1\cdots P_s)$ contains a non-unit $a\in Z$. As $Z$ is a near UFD, one may assume that $a$ is a prime of $Z$. It follows from 
$P_1,\ldots, P_s$ being primitive \hbox{w.r.t.} $Z$ that the product $P_1\cdots P_s$ is nonzero modulo $a$. From (a), the norm $|Z/aZ|$ should be finite, whence a contradiction. Conclude that the set ${\mathcal F}_{\underline t}(P_1\cdots P_s)$ is empty.
\vskip 1mm

\noindent
(c) With $Z=R[u_1,\ldots,u_r]$ as in the statement, assume first that $R$ is infinite. Let $p\in R[u_1,\ldots,u_r]$ be a prime element. Suppose first that $p\not\in R$, say $d=\deg_{u_r}(p)\geq 1$. 
The elements $1,u_r,\ldots,u_r^{d-1}$ are $R[u_1,\ldots,u_{r-1}]$-linearly independent in the integral domain $Z/pZ$.
As $R$ is infinite, the elements $\sum_{i=0}^{d-1} p_i u_r^i$ with $p_0,\ldots,p_{d-1}\in R$ are  infinitely many different elements in $Z/p Z$. Thus $Z/pZ$ is infinite. 
In the case that $p\in R$, the quotient ring $Z/pZ$ is $(R/pR)[u_1,\ldots,u_r]$, which is infinite too. 

If $Z=R[u_1,\ldots,u_r]$ with $r\geq 2$, write $R[u_1,\ldots,u_r] = R[u_1][u_2,\ldots,u_r]$ and use the previous
paragraph with $R$ taken to be the infinite ring $R[u_1]$.

If $Z$ is an integral domain containing an infinite field $k$, the containment $k\subset Z$ in\-du\-ces an injective morphism $k\hookrightarrow Z/pZ$ for every prime $p$ of $Z$. The last claim follows.
\end{proof}

\subsection{The two Hypotheses} \label{ssec:generalization}
Definition \ref{th:schinzel-coprime2} introduces the coprime Schinzel Hypothesis in the general situation of $s$ sets of polynomials in $k$ variables $t_i$, with $k,s\geq 1$. 
The initial definition from Theorem \ref{thm:main_theorem} (denoted \CopSch\hskip 2pt in \S \ref{sec:further}) corresponds to the special case $s=k=1$;
 in particular assumption (AV) from there is  (AV)${}_{\underline t}$ below with $\underline t = t$ and $s=1$.

\begin{definition}
\label{th:schinzel-coprime2}
Given an integral domain $Z$ of fraction field $Q$, we say that the {\it coprime Schinzel Hypothesis holds for $Z$} if  for any integers $k,s \geq 1$, the following property is true:
\vskip 0,5mm

\noindent
\hskip 1pt ${\mathcal CopSch}(k,s)$: Consider $s$ sets $\{P_{11},\ldots,P_{1\ell_1}\}$,$\ldots$, $\{P_{s1},\ldots,P_{s\ell_s}\}$ of nonzero polynomials  in $Z[\underline t]$ (with $\underline t=(t_1,\ldots,t_k)$) such that $\ell_i \geq 2$ and $P_{i1},\ldots,P_{i\ell_i}$ are coprime in $Q[\underline t]$, for each $i=1,\ldots, s$. Assume further that
\vskip 1mm

\noindent
(AV)${}_{\underline t}$ {\it for every non-unit $a\in Z$, there exists $\underline m\in Z^k$, such that, for each $i=1,\ldots, s$, the values $P_{i1}(\underline m),\ldots,P_{i\ell_i}(\underline m)$ are \underbar{not all} divisible by $a$.}

\vskip 1mm

\noindent
Then there exists $\underline m\in Z^k$ such that, for each $i=1,\ldots, s$, the values $P_{i1}(\underline m),\ldots, P_{i\ell_i}(\underline m)$
are
coprime in $Z$.
\end{definition}

This property is the one used by Schinzel (over $Z=\Zz$) in his 2002 paper \cite{schinzel2002}. The next definition introduces an alternate property, which is equivalent under some assumption, and which better suits our Hilbert-Schinzel context.

\begin{definition}
\label{th:primitive_hyp} 
Given an integral domain $Z$, we say that the {\it Primitive Specialization Hypothesis holds for $Z$} if for any integers $k,n, s \geq 1$ the following property is true:
\vskip 0,5mm

\noindent
\hskip 1pt ${\mathcal PriSpe}(k,n,s)$: Let $P_1(\underline t,\underline y), \ldots ,P_s(\underline t,\underline y)\in Z[\underline t,\underline y]$ be $s$ nonzero polynomials (in the variables $\underline t= (t_1,\ldots,t_k)$ and $\underline y=(y_1,\ldots,y_n)$). Assume that they are primitive \hbox{w.r.t.} $Q[\underline t]$ and that ${\mathcal F}_{\underline t}(P_1\cdots P_s)=\emptyset$. 
Then there exists $\underline m\in Z^k$ such that the polynomials $P_1(\underline m,\underline y), \ldots$, $P_s(\underline m,\underline y)$ are primitive \hbox{w.r.t.} $Z$.
\end{definition}

 \begin{lemma} \label{lem:reform}
{\rm  (1)} For all integers $k,s\geq 1$, we have: 
 \vskip 0,5mm
 
 \centerline{${\mathcal CopSch}(k,s) \Longrightarrow {\mathcal PriSpe}(k,n,s)$ for every $n\geq 1$.}
 \vskip 0,5mm
 
 \noindent
 {\rm  (2)} The converse holds as well

 {\rm (a)} if $Z$ has the property that every non-unit is divisible by a prime element, or, 
 
 {\rm (b)} in the special situation that $s=1$.
  \vskip 0,6mm

\noindent
{\rm  (3)} For all integers $k,n,s\geq 1$, we have: 
 \vskip 0,5mm
 
 \centerline{${\mathcal PriSpe}(k,n,1) \Longrightarrow {\mathcal PriSpe}(k,n,s)$.}
 \vskip 0,5mm

\end{lemma}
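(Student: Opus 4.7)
The common thread throughout the three parts is the basic dictionary: a polynomial $P(\underline t,\underline y)\in Z[\underline t,\underline y]$ is primitive w.r.t.~$Q[\underline t]$ (respectively w.r.t.~$Z$) if and only if its $\underline y$-coefficients are coprime in $Q[\underline t]$ (respectively in $Z$). For part (1), assume ${\mathcal CopSch}(k,s)$ and let $P_1,\ldots,P_s$ be data for ${\mathcal PriSpe}(k,n,s)$. Write $P_i=\sum_\alpha P_{i,\alpha}(\underline t)\underline y^\alpha$ and let $\{P_{i1},\ldots,P_{i\ell_i}\}$ be the set of nonzero coefficients. If $\ell_i=1$ then $P_i=a\,\underline y^\alpha$ with $a\in Z\setminus\{0\}$; were $a$ a non-unit it would lie in ${\mathcal F}_{\underline t}(P_i)\subseteq{\mathcal F}_{\underline t}(P_1\cdots P_s)$, so $a$ is a unit and $P_i(\underline m,\underline y)$ is primitive w.r.t.~$Z$ for every $\underline m$. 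I pad any such singleton by inserting $1$ to obtain a coprime pair $\{a,1\}$ that trivially satisfies (AV)${}_{\underline t}$. To verify (AV)${}_{\underline t}$ for the combined collection, I argue contrapositively: a non-unit $a$ violating it would, for every $\underline m$, divide the entire coefficient set of some $P_i(\underline m,\underline y)$, hence would divide $P_i(\underline m,\underline y)$ and thus the product $P_1\cdots P_s(\underline m,\underline y)$ in $Z[\underline y]$, placing $a$ in ${\mathcal F}_{\underline t}(P_1\cdots P_s)$. Applying ${\mathcal CopSch}(k,s)$ then yields the required $\underline m$.

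For part (2), given ${\mathcal CopSch}$ data $\{P_{ij}\}_j$, I set $P_i(\underline t,\underline y):=\sum_{j=1}^{\ell_i}P_{ij}(\underline t)\,y_j$ in $Z[\underline t,y_1,\ldots,y_n]$ after enlarging $n$ to at least $\max_i\ell_i$. Each $P_i$ is primitive w.r.t.~$Q[\underline t]$ by the coprimality hypothesis. The crux is to verify ${\mathcal F}_{\underline t}(P_1\cdots P_s)=\emptyset$. Suppose a non-unit $a\in Z$ lies in this set. In case (a), I replace $a$ by a prime divisor $p$ (also a fixed divisor); since $(Z/pZ)[\underline y]$ is then a domain, $p\mid P_1(\underline m,\underline y)\cdots P_s(\underline m,\underline y)$ for every $\underline m$ forces some factor $P_i(\underline m,\underline y)$ to vanish mod $p$, i.e.~$p$ divides every $P_{ij}(\underline m)$ for that $i$, contradicting (AV)${}_{\underline t}$ at the prime $p$. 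In case (b) with $s=1$, the prime reduction is unnecessary: $a\mid P_1(\underline m,\underline y)$ for every $\underline m$ means $a\mid P_{1j}(\underline m)$ for all $j$ and $\underline m$, the direct negation of (AV)${}_{\underline t}$ at $a$. Invoking ${\mathcal PriSpe}(k,n,s)$ then produces $\underline m$ with each $P_i(\underline m,\underline y)$ primitive w.r.t.~$Z$, which is exactly the desired coprimality of the $P_{ij}(\underline m)$ in $Z$.

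For part (3), I consolidate into a single polynomial $P:=P_1\cdots P_s$. By the iterated Gauss lemma in the UFD $Q[\underline t]$, primitivity w.r.t.~$Q[\underline t]$ is multiplicative, so $P$ is primitive w.r.t.~$Q[\underline t]$, and ${\mathcal F}_{\underline t}(P)=\emptyset$ is the standing hypothesis. Applying ${\mathcal PriSpe}(k,n,1)$ to $P$ furnishes $\underline m$ such that $P_1(\underline m,\underline y)\cdots P_s(\underline m,\underline y)$ is primitive w.r.t.~$Z$; each factor is then necessarily primitive w.r.t.~$Z$, since a non-unit dividing every $\underline y$-coefficient of $P_i(\underline m,\underline y)$ would divide every $\underline y$-coefficient of the full product, contradicting its primitivity. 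The most delicate step across the whole lemma is the prime reduction in (2)(a): it essentially uses that every non-unit of $Z$ has a prime divisor, so that the domain property of $(Z/pZ)[\underline y]$ lets a fixed divisor of the product $P_1\cdots P_s$ propagate to a fixed divisor condition at the level of a single factor. Rings such as $\Zz[\sqrt{-5}]$ show that this assumption cannot be dropped, which is precisely why case (b) separately treats $s=1$, where the propagation step is vacuous.
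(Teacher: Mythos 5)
Your proof is correct and follows essentially the same route as the paper: extracting the $\underline y$-coefficients and comparing the two quantified conditions for (1), encoding each coprime family as the linear form $\sum_j P_{ij}(\underline t)y_j$ for (2), and passing to the product $P_1\cdots P_s$ via Gauss's lemma in $Q[\underline t]$ for (3). You merely make explicit two points the paper leaves brief — the prime-reduction argument in $(Z/pZ)[\underline y]$ showing that (AV)$_{\underline t}$ rules out fixed divisors of the product under hypothesis (a), and the unit-coefficient treatment of the monomial case — both of which are handled correctly.
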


Thus the coprime Schinzel Hypothesis is stronger than the Primitive Specialization Hypothesis and they are equivalent in either case (a) or (b) from (2), in particular if $Z$ is a near UFD. Assertion (3) allows reducing to the case of one polynomial $P(\underline t, \underline y)\in Z[\underline t, \underline y]$ for proving the Primitive Specialization Hypothesis. For a near UFD, the analogous reduction to one family $\{P_{1}(\underline t),\ldots,P_{\ell}(\underline t)\}$ of polynomials in $Z[\underline t]$ also holds for the (equivalent) coprime Schinzel Hypothesis, but this is not clear for a general domain.

\begin{proof}
(1) Assuming ${\mathcal CopSch}(k,s)$ for some $k,s\geq 1$, let  $\underline y=(y_1,\ldots,y_n)$ be $n\geq 1$ variables and $P_i(\underline t,\underline y)$ ($i=1,\ldots,s$) as in ${\mathcal PriSpe}(k,n,s)$. Consider the sets $\{P_{i1}(\underline t),\ldots,P_{i\ell_i}(\underline t)\}$ ($i=1,\ldots,s$) of their respective coefficients in $Z[\underline t]$. Condition ${\mathcal F}_{\underline t}(P_1\cdots P_s)=\emptyset$
rewrites:
\vskip 1mm

\noindent
(*) \hskip 8mm
{\it $(\forall a\in Z\setminus Z^\times)$\hskip 1mm $(\exists \underline m\in Z^k)$ \hskip 1mm $(a$ {\rm does not divide} $\prod_{i=1}^sP_i(\underline m, \underline y))$}.

\vskip 1mm

\noindent
This obviously implies that

\vskip 1mm

\noindent
(**) \hskip 6mm
{\it $(\forall a\in Z\setminus Z^\times)$\hskip 1mm $(\exists \underline m\in Z^k)$ \hskip 1mm $(\forall i=1,\ldots,s)$\hskip 1mm $(a$ {\rm does not divide} $P_i(\underline m, \underline y))$},

\vskip 1mm

\noindent
which is equivalent to condition (AV)${}_{\underline t}$ for the sets $\{P_{i1}(\underline t),\ldots,P_{i\ell_i}(\underline t)\}$ ($i=1,\ldots,s$). If $\ell_1,\ldots,\ell_s$ are $\geq 2$, then the coprime Schinzel Hypothesis yields some $\underline m\in Z^k$ such that 
$P_{i1}(\underline m),\ldots, P_{i\ell_i}(\underline m)$ are 
coprime in $Z$, which equivalently translates as $P_i(\underline m,\underline y)$ being 
primitive \hbox{w.r.t.} $Z$ ($i=1,\ldots,s$). Taking Remark \ref{rem:technical_point}(1) below into account, we obtain that ${\mathcal CopSch}(k,s)$ implies ${\mathcal PriSpe}(k,n,s)$.

\begin{remark} \label{rem:technical_point} (1) If $\ell_i=1$ for some $i\in\{1,\ldots,s\}$, \hbox{i.e.} if the polynomial $P_i(\underline t,\underline y)$ is a monomial in $\underline y$, then this polynomial should be treated independently. In this case, $P_i(\underline t,\underline y)$ being primitive \hbox{w.r.t.} $Z[\underline t]$, it is of the form $c y_1^{i_1}\cdots y_n^{i_n}$ for some integers $i_1,\ldots,i_n \geq 0$ and $c\in Z^{\times}$. Then $P_i(\underline m,\underline y) = c y_1^{i_1}\cdots y_n^{i_n}$ remains primitive \hbox{w.r.t.} $Z$ {\it for every $\underline m\in Z^k$}.
\end{remark}

\noindent
(2) Suppose given sets $\{P_{i1}(\underline t),\ldots,P_{i\ell_i}(\underline t)\}$ as in ${\mathcal CopSch}(k,s)$ for some integers $k,s\geq 1$. Consider
the polynomials 
\vskip 0,5mm

\centerline{$P_i(\underline t,\underline y)= P_{i1}(\underline t)y_1+ \cdots + P_{i\ell_i}(\underline t)y_{\ell_i}$, $i=1,\ldots,s$,}
\vskip 0,5mm

\noindent
where $\underline y=(y_1,\ldots,y_\ell)$ is an $\ell$-tuple of new variables and $\ell = \max(\ell_1,\ldots,\ell_s)$\footnote{Any polynomial in $Z[\underline t][y_1,\ldots,y_{\ell_i}]$ with coefficients $P_{i1}(\underline t),\ldots,P_{i\ell_i}(\underline t)$ may be used instead of the polynomial $P_{i1}(\underline t)y_1+ \cdots + P_{i\ell_i}(\underline t)y_{\ell_i}$ ($i=1,\ldots,s$).}. Condition (AV)${}_{\underline t}$ rewrites as (**) above.
In either one of the situations (a) or (b) of the statement, condition (**) does imply condition (*), and equivalently ${\mathcal F}_{\underline t}(P_1\cdots P_s)=\emptyset$, for the polynomials 
$P_i(\underline t,\underline y)$ defined above. Thus if ${\mathcal PriSpe}(k,n,s)$ holds (for every $n\geq 1$), we obtain some $\underline m\in Z^k$ such that the polynomials $P_1(\underline m,\underline y), \ldots$, $P_s(\underline m,\underline y)$ are 
primitive \hbox{w.r.t.} $Z$, which, in terms of the original polynomials $P_{ij}(\underline t)$, corresponds to the conclusion of the requested property ${\mathcal CopSch}(k,s)$.
\vskip 1mm

\noindent
(3) Let $P_1(\underline t,\underline y),\ldots, P_s(\underline t,\underline y)$ be as in ${\mathcal PriSpe}(k,n,s)$. Set $P=P_1\cdots P_s$. We have $P\in Z[\underline t,\underline y]$, and from Gauss's lemma (applied to the UFD $Q[\underline t]$), $P$ is primitive \hbox{w.r.t.} $Q[\underline t]$. By hypothesis, ${\mathcal F}_{\underline t}(P)=\emptyset$. Assuming ${\mathcal PriSpe}(k,n,1)$, it follows that there  exists $\underline m\in Z^k$ such that $P(\underline m,\underline y)$ is 
primitive \hbox{w.r.t.} $Z$. But then, the polynomials $P_1(\underline m,\underline y), \ldots, P_s(\underline m,\underline y)$ are 
primitive \hbox{w.r.t.} $Z$ as well.
\end{proof}

\subsection{Main result} \label{ssec:main-conclusions} Before stating the main conclusions on our Hypotheses, we introduce
the following variant of the Primitive Specialization Hypothesis, which is more precise and more involved but turns out to be 
quite central and useful.

\begin{definition} \label{def:strongPSH}
Let $Z$ be an integral domain such that every nonzero $a\in Z$ has only finitely many prime divisors (modulo units). We say that the {\it \underbar{starred} Primitive Specialization Hypothesis holds for $Z$} if the following property is true for any integers $k,s \geq 1$:

\vskip 1,5mm

\noindent
${\mathcal PriSpe}^{\displaystyle \star}(k,s)$: 
{\it Let $n\geq 1$ and let $P_1(\underline t,\underline y), \ldots ,P_s(\underline t,\underline y)\in Z[\underline t,\underline y]$ be $s$ nonzero polynomials (with $\underline t=(t_1,\ldots,t_k), \underline y=(y_1,\ldots,y_n)$), primitive \hbox{w.r.t.} $Q[\underline t]$  and such that  the product $P_1\cdots P_s$ has no fixed \underbar{prime} divisor
in $Z$ \hbox{w.r.t.} $\underline t$.
Let $P_0\in Z[\underline t]$, \hbox{$P_0\not=0$.}
 Then there exists  $(m_1, \ldots, m_{k-1}) \in Z^{k-1}$ and an arithmetic progression $\tau_k = (\omega_k \ell + \alpha_k)_{\ell\in Z}$ with $\omega_k,\alpha_k\in Z$, $\omega_k\not=0$, such that for all but finitely many 
 $m_k\in \tau_k$, the polynomials
\vskip 1mm

\noindent
 \hskip 25mm $P_1(m_1,\ldots,m_{k},\underline y), \ldots, P_s(m_1,\ldots,m_{k},\underline y)$
\vskip 1mm

\noindent
have no \underbar{prime} divisors in $Z$,
and $P_0(m_1,\ldots,m_{k})\not=0$. }

\noindent
({\it with the convention that for $k=1$, existence of $(m_1, \ldots, m_{k-1}) \in Z^{k-1}$ is not requested}).
\end{definition}

If $Z$ is a near UFD (and so it is equivalent to request that prime or non-unit divisors exist), ${\mathcal PriSpe}^{\displaystyle \star}(k,s)$ is a more precise form of (${\mathcal PriSpe}(k,n,s)$ {\it for all} $n\geq 1$) saying where to find the tuples $\underline m$, the existence of which is asserted in ${\mathcal PriSpe}(k,n,s)$; thus we have 
\vskip 1mm

\centerline{${\mathcal PriSpe}^{\displaystyle \star}(k,s) \Rightarrow ({\mathcal PriSpe}(k,n,s)$ {\it for all} $n\geq 1$) $ \Leftrightarrow {\mathcal CopSch}(k,s)$.}
\vskip 1mm

\noindent
Because the existence of non unit \hbox{vs.}  prime divisors issue is not void in Dedekind domains, Definition \ref{th:primitive_hyp} and Definition \ref{def:strongPSH} do not compare so obviously for Dedekind domains. 

\begin{theorem} \label{thm:main_theorem_gen} {\rm (a)} If $Z$ is a near UFD or a Dedekind domain, then the starred Primitive Specialization Hypothesis holds for $Z$.
\vskip 0,5mm

\noindent
{\rm (b)} If $Z$ is a near UFD, then both the Coprime Schinzel Hypothesis and the Primitive Specialization Hypotheses
 hold.
\vskip 0,5mm

\noindent
{\rm (c)} If $Z$ is a Dedekind domain, then the Primitive Specialization Hypothesis holds with $k=1$ and $n,s\geq 1$ (\hbox{i.e.} for polynomials with one variable $t$ to be specialized), and the Coprime Schinzel Hypothesis holds if in addition $s=1$ (one polynomial $P$).
\end{theorem}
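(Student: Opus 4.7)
I would first establish part (a), which carries the main technical content, and then derive (b) and (c) as corollaries. For (b), the starred version ${\mathcal PriSpe}^{\displaystyle \star}(k,s)$ is a sharpening of ${\mathcal PriSpe}(k,n,s)$ for every $n\geq 1$; since in a near UFD every non-unit has a prime divisor, Lemma \ref{lem:reform}(2a) converts this into ${\mathcal CopSch}(k,s)$. For (c), the Dedekind case of Theorem \ref{thm:main_theorem} proved in \S \ref{ss:ded} is exactly ${\mathcal CopSch}(1,1)$; Lemma \ref{lem:reform}(2b) turns it into ${\mathcal PriSpe}(1,n,1)$, and Lemma \ref{lem:reform}(3) then promotes this to ${\mathcal PriSpe}(1,n,s)$ for all $n,s\geq 1$.

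For part (a), I first reduce to $s=1$ via a starred analog of Lemma \ref{lem:reform}(3): Gauss's lemma preserves primitivity of $P=P_1\cdots P_s$ \hbox{w.r.t.} $Q[\underline t]$, a prime of $Z$ is a fixed divisor of the product iff it is one of some factor, and $Z$-primitivity of $P(\underline m,\underline y)$ descends to each $P_i(\underline m,\underline y)$. I would then proceed by induction on $k$. For the base case $k=1$, let $p_{I_1}(t),\ldots,p_{I_\ell}(t)\in Z[t]$ be the nonzero $\underline y$-coefficients of $P(t,\underline y)$; they are coprime in $Q[t]$ by primitivity. Lemma \ref{lemma:1} yields a nonzero $\delta\in Z$ with the periodicity ${\mathcal D}_m={\mathcal D}_{m+z\delta}$. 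In the near UFD case, $\delta$ admits only finitely many prime divisors $p_1,\ldots,p_r$ (up to units); in the Dedekind case, the factorization $\delta Z=\prod_j {\mathcal Q}_j^{e_j}$ plays the analogous role, as in \S \ref{ss:ded}. The ``no fixed prime divisor'' hypothesis provides, for each $j$, some $\tilde m_j\in Z$ with $p_j\nmid P(\tilde m_j,\underline y)$. The crux is to glue these into a single $\alpha\in Z$ good at every $p_j$; one then takes $\omega$ to be a suitable multiple of $\delta$, and the periodicity of Lemma \ref{lemma:1} ensures $P(\alpha+\omega\ell,\underline y)$ is primitive \hbox{w.r.t.} $Z$ for all $\ell\in Z$, while the finitely many zeroes of $P_0$ on the progression are discarded via the cofinite clause. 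In the Dedekind case the gluing is by the Chinese Remainder Theorem and is carried out in Proposition \ref{rem:plus}; for near UFDs one must proceed differently, since distinct primes need not be comaximal.

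For the inductive step $k\geq 2$, view $P(\underline t,\underline y)$ as a polynomial in $(t_1,\ldots,t_{k-1})$ with $t_k$ adjoined to $\underline y$, and apply the inductive hypothesis in $k-1$ variables. Primitivity \hbox{w.r.t.} $Q[t_1,\ldots,t_{k-1}]$ follows from primitivity \hbox{w.r.t.} $Q[\underline t]$, since any irreducible of $Q[t_1,\ldots,t_{k-1}]$ stays irreducible in $Q[\underline t]$ by a Gauss-type $t_k$-degree comparison, and absence of fixed prime divisor \hbox{w.r.t.} $(t_1,\ldots,t_{k-1})$ descends similarly. In the inductive call, the auxiliary $P_0$ slot is enlarged by a resultant-type polynomial $R(t_1,\ldots,t_{k-1})$ whose nonvanishing at the chosen tuple guarantees that $P(m_1,\ldots,m_{k-1},t_k,\underline y)$ satisfies the base-case hypotheses in the single variable $t_k$ (primitivity \hbox{w.r.t.} $Q[t_k]$ and absence of fixed prime divisor \hbox{w.r.t.} $t_k$). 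The base case then supplies the arithmetic progression for $m_k$. The main obstacle is precisely the gluing step in the base case for near UFDs: without CRT across distinct primes (which can fail already in UFDs such as $\Zz[x]$, where $2$ and $x$ are coprime but not comaximal), one must exploit the periodicity modulo $\delta$ and clear the primes $p_j$ one at a time. A secondary technical point is the clean transfer of both hypotheses through the induction via the auxiliary resultant $R$.
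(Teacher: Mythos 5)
Your architecture matches the paper's: reduce to $s=1$, induct on $k$ with $t_k$ peeled off last, handle the base case $k=1$ via the parameter $\delta$ of Lemma \ref{lemma:1} (near UFD case) or via Proposition \ref{rem:plus} (Dedekind case), and deduce (b) and (c) formally (though for (c) the implication ${\mathcal CopSch}(1,1)\Rightarrow{\mathcal PriSpe}(1,n,1)$ is Lemma \ref{lem:reform}(1), not (2b)). However, there is a genuine gap in your inductive step. You assert that the nonvanishing of a resultant-type polynomial $R(t_1,\ldots,t_{k-1})$ at the chosen tuple guarantees that $P(m_1,\ldots,m_{k-1},t_k,\underline y)$ has no fixed \emph{prime} divisor w.r.t.\ $t_k$. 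A resultant condition only controls the generic statement --- coprimality of the $t_k$-coefficients in $Q[t_k]$, which is the role of $\Delta$ in relation (3) of the paper's proof of Lemma \ref{lem:reduction1} --- whereas absence of fixed prime divisors is an arithmetic condition ranging over all primes of $Z$ and cannot be captured by the nonvanishing of a single polynomial. The paper needs a two-pronged mechanism: primes of small norm $|Z/pZ|\le\max_h\deg_{t_h}(P)$ form a finite set ${\mathcal P}$ (Lemma \ref{lem:cond-star}), and these are eliminated once and for all by restricting specializations to a congruence class $\pi\underline v^\prime+\underline u^\prime$ chosen via Lemma \ref{lemma:3} so that $P(\underline u,\underline y)\not\equiv 0\pmod p$ for every $p\in{\mathcal P}$; primes of large norm are then excluded by the norm-versus-degree bound of Lemma \ref{lem:prelim-proof-R[u]}(a), which requires knowing that the specialized polynomial is nonzero mod $p$ --- and this is precisely why the starred hypothesis concludes ``no prime divisors'' rather than merely ``primitive'', the stronger conclusion being fed back into the induction. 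None of this appears in your proposal, and without it the induction does not close.

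A secondary, smaller gap: you correctly observe that plain CRT fails for the base-case gluing over a near UFD (distinct primes need not be comaximal, e.g.\ $2$ and $x$ in $\Zz[x]$) and propose to ``clear the primes one at a time'', but you do not give the argument. The paper's Lemma \ref{lemma:3} does this by separating the maximal principal primes (where CRT applies) from the non-maximal ones, and for the latter exploits that $Z/pZ$ is an \emph{infinite} integral domain, adjusting the specialization by elements of the product of the ideals already treated that avoid the next one. You flag the difficulty, which is to your credit, but the step is not carried out.
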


Theorem \ref{thm:main_theorem} for UFDs is the special case $s=k=1$ of Theorem \ref{thm:main_theorem_gen}(b), and Theorem \ref{thm:main_theorem} for Dedekind domains 
(already proved in  \S \ref{ss:ded}) is the second part of Theorem \ref{thm:main_theorem_gen}(c).

\subsection{Two lemmas} \label{ssec:chinese}  
The following lemmas are used in the proof of Theorem \ref{thm:main_theorem_gen} and of Theorem \ref{thm-intro}.  The first one is a refinement of the Chinese Remainder Theorem.

\begin{lemma} \label{lemma:3} Let $Z$ be an integral domain. Let $I_1,\ldots, I_\rho$ be $\rho$ maximal ideals of $Z$ and $I_{\rho+1},\ldots,I_r$ be $r-\rho$ ideals assumed to be prime but not maximal (with $0\leq \rho \leq r$). Assume further that $I_j \not\subset I_{j^\prime}$ for any distinct elements $j,j^\prime \in \{1,\ldots,r\}$. Let $\underline t$, $\underline y$ be tuples of variables of length $k, n\geq 1$ and let $F\in Z[\underline t,\underline y]$ be a polynomial, nonzero modulo each ideal $I_j$, $j=\rho+1,\ldots,r$. Then for every $(\underline a_1,\ldots,\underline a_\rho)\in (Z^k)^\rho$, there exists $\underline m\in Z^k$ such that $\underline m \equiv \underline a_j \pmod{I_j}$ for each $j=1,\ldots,\rho$, and $F(\underline m,\underline y) \not\equiv 0 \pmod{I_j}$ for each $j=\rho+1,\ldots,r$.
\end{lemma}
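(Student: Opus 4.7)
\smallskip\noindent
\textbf{Proof plan.} The idea is to handle the $\rho$ congruence conditions first by the classical Chinese Remainder Theorem (distinct maximal ideals being automatically comaximal), producing an initial $\underline m_0 \in Z^k$ with $\underline m_0 \equiv \underline a_j \pmod{I_j}$ for $j \leq \rho$. The non-vanishing conditions modulo the prime (non-maximal) ideals $I_{\rho+1}, \ldots, I_r$ will then be enforced one at a time by additive perturbations crafted so as to preserve previously satisfied conditions.

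More precisely, I would inductively construct $\underline m_\rho := \underline m_0, \underline m_{\rho+1}, \ldots, \underline m_r$, where $\underline m_i$ satisfies the prescribed congruences modulo $I_1, \ldots, I_\rho$ together with $F(\underline m_i, \underline y) \not\equiv 0 \pmod{I_j}$ for $\rho < j \leq i$. For the step $\underline m_{i-1} \to \underline m_i$, I set $J_i = \bigcap_{j<i} I_j$ and seek $\underline m_i$ of the form $\underline m_{i-1} + \beta_i \underline x$ with $\beta_i \in J_i \setminus I_i$, so that automatically $\underline m_i \equiv \underline m_{i-1} \pmod{I_j}$ for all $j<i$. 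Such a $\beta_i$ exists by prime avoidance: were $J_i$ contained in the prime $I_i$, primality of $I_i$ would force $I_j \subset I_i$ for some $j<i$, contradicting the incomparability hypothesis.

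It remains to choose $\underline x \in Z^k$ so that $G(\underline x, \underline y) := F(\underline m_{i-1} + \beta_i \underline x, \underline y)$ is nonzero modulo $I_i$. The decisive observation is that since $I_i$ is prime but not maximal, the quotient $Z/I_i$ is an integral domain that is not a field, and hence is infinite. Since $\beta_i$ is nonzero mod $I_i$, the substitution $\underline t \mapsto \underline m_{i-1} + \beta_i \underline x$ is invertible over $\mathrm{Frac}(Z/I_i)$, so it carries the nonzero polynomial $F \pmod{I_i}$ to a nonzero element of $(Z/I_i)[\underline x, \underline y]$. A standard infinite-domain argument---specialize $\underline y$ to some $\underline y_0 \in (Z/I_i)^n$ at which a nonzero $\underline x$-coefficient of $G$ survives, then specialize $\underline x$---produces $\underline x_0 \in (Z/I_i)^k$ with $G(\underline x_0, \underline y) \not\equiv 0$ in $(Z/I_i)[\underline y]$; any lift of $\underline x_0$ to $Z^k$ completes the step, and $\underline m := \underline m_r$ fulfils the conclusion.

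The main obstacle is conceptual rather than computational: the two ingredients that make the perturbation step work---prime avoidance (which yields $\beta_i$) and the infiniteness of $Z/I_i$ (which is precisely what ``prime and not maximal'' provides)---must both be identified and used in concert. Once they are in place, the iterative construction closes without further difficulty.
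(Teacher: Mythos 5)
Your proof is correct and uses essentially the same two ingredients as the paper's: perturbation by an element lying in all previously handled ideals but not in the current prime $I_i$ (whose existence follows from primality plus the incomparability hypothesis), and the infiniteness of $Z/I_i$ for a prime non-maximal ideal to find a good specialization of the perturbation variable. The only difference is organizational — the paper first builds $\underline m_0$ satisfying the non-vanishing conditions by induction and then imposes the congruences via the Chinese Remainder Theorem with $J = I_{\rho+1}\cdots I_r$, whereas you apply CRT first and arrange each perturbation $\beta_i \in \bigcap_{j<i} I_j$ to preserve the congruences as well — which is an equally valid (and arguably slightly more uniform) arrangement of the same argument.
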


\begin{proof}  
Assume first $0\leq \rho<r$. A first step is to show by induction on $r-\rho\geq 1$ that there exists $\underline m_0\in Z^k$ such that $F(\underline m_0,\underline y) \not\equiv 0 \pmod{I_j}$, $j=\rho+1,\ldots,r$. 

Start with $r-\rho =1$. The quotient ring $Z/I_{\rho+1}$ is an integral domain but not a field, hence is infinite; and $F$ is nonzero modulo 
$I_{\rho+1}$ (\hbox{i.e.} in $(Z/I_{\rho+1})[\underline t,\underline y]$). Thus elements $\underline m_0\in Z^k$ exist such that $F(\underline m_0,\underline y) \not\equiv 0 \pmod{I_{\rho+1}}$. Assume next that there is an element of $Z^k$, say $\underline m_1$, such that $F(\underline m_1,\underline y) \not\equiv 0 \pmod{I_j}$, $j=\rho+1,\ldots,s$ with $s<r$. It follows from the assumptions
on $I_{\rho+1},\ldots,I_r$ that the product $I_{\rho+1} \cdots I_s$ is not contained in $I_{s+1}$. Pick an element $\pi$ in 
$I_{\rho+1} \cdots I_s$ that is not in $I_{s+1}$ and consider the polynomial $F(\underline m_1 + \pi \underline t,\underline y)$. This polynomial is nonzero modulo $I_{s+1}$ since both $F$ and $\underline m_1 + \pi \underline t$ are nonzero modulo $I_{s+1}$. As above, the quotient ring $Z/I_{s+1}$ is an infinite integral domain and so there exists $\underline t_0\in Z^k$ such that $F(\underline m_1+\pi \underline t_0,\underline y) \not\equiv 0 \pmod{I_{s+1}}$; and for each $j=\rho+1,\ldots,s$, since $\pi\in I_j$,  we have $F(\underline m_1+\pi \underline t_0,\underline y) \equiv F(\underline m_1,\underline y)\not\equiv 0 \pmod{I_{j}}$.  
Set $\underline m_0 = \underline m_1+\pi \underline t_0$ to conclude the induction.

Set $J = I_{\rho+1}\cdots I_r$. The ideals $I_1,\ldots, I_\rho, J$ are pairwise comaximal.
We may apply the Chinese Remainder Theorem, and will, component by component. More specifically write $\underline m_0=(m_{01},\ldots,m_{0k})$ and $\underline a_i=(a_{i1},\ldots,a_{ik})$, $i=1,\ldots,\rho$. For each $h=1,\ldots,k$, there is an element $m_h\in Z$ such that 
$m_h \equiv a_{jh} \pmod{I_j}$ for each $j=1,\ldots,\rho$, and $m _h\equiv m_{0h} \pmod{J}$. Set $\underline m=(m_1,\ldots,m_k)$. Clearly we have $\underline m \equiv \underline a_j \pmod{I_j}$ for each $j=1,\ldots,\rho$, and $\underline m \equiv \underline m_{0}  \pmod{J}$. The last congruence implies that, for each $j=\rho+1,\ldots,r$, we have $\underline m \equiv \underline m_{0}  \pmod{I_j}$, and so $F(\underline m,\underline y)\not\equiv 0 \pmod{I_j}$.

If $r=\rho$, then there is no ideal $J$ and the sole second part of the argument, applied with the maximal ideals $I_1,\ldots, I_\rho$, yields the result.
\end{proof}

\begin{lemma} \label{lem:cond-star} Let $Z$ be an integral domain such that every nonzero element $a\in Z$ has only finitely many prime divisors (modulo units). Then, for every real number $B>0$, there are only finitely many prime principal ideals $pZ$ 
of norm $|Z/pZ|$ less than or equal to $B$.
\end{lemma}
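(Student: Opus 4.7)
The plan is to exhibit a single nonzero element $a\in Z$ that is divisible by every prime element $p$ with $|Z/pZ|\le B$, and then invoke the hypothesis that $a$ has only finitely many prime divisors modulo units.

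First I would dispose of the case where $Z$ is finite: as a finite integral domain, $Z$ is then a field, so it has no nonzero non-unit elements and the set of prime principal ideals is empty. Hence we may assume $Z$ is infinite. Next comes the key algebraic observation: for a prime element $p\in Z$ with $|Z/pZ|\le B$, the quotient $Z/pZ$ is a finite integral domain, hence a finite field. Its cardinality $q_p$ is therefore a prime power, necessarily at most $B$, and every $x\in Z$ satisfies $x^{q_p}\equiv x\pmod{p}$, i.e.\ $p\mid x^{q_p}-x$.

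Let $\mathcal{Q}$ denote the finite set of prime powers less than or equal to $B$, and consider the polynomial
\[
f(t)\;=\;\prod_{q\in\mathcal{Q}}(t^q-t)\;\in\;Z[t].
\]
Since $Z$ is an infinite integral domain and each factor $t^q-t\in Z[t]$ has at most $q$ roots in $Z$, there exists $x_0\in Z$ such that $x_0^q-x_0\ne 0$ for every $q\in\mathcal{Q}$. Setting $a:=f(x_0)$, we get $a\ne 0$; and for every prime element $p$ with $|Z/pZ|\le B$, the value $q_p$ lies in $\mathcal{Q}$, so $p$ divides the factor $x_0^{q_p}-x_0$ of $a$, and hence $p\mid a$.

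By the standing hypothesis on $Z$, the element $a$ has only finitely many prime divisors modulo units, so there are only finitely many primes $p$ (modulo units) with $|Z/pZ|\le B$, i.e.\ only finitely many prime principal ideals $pZ$ of norm at most $B$. There is no real obstacle here; the only point requiring a moment of thought is the need to pick $x_0$ avoiding the finitely many roots of each $t^q-t$, which is harmless once one has observed that the interesting case is $Z$ infinite.
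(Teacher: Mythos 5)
Your proof is correct and follows essentially the same route as the paper: build a nonzero element $a$ as a product of terms $m^q-m$ over the prime powers $q\le B$, observe that any prime of norm $\le B$ must divide one of these factors since $Z/pZ$ is a finite field, and invoke the finiteness of prime divisors of $a$. The only (immaterial) difference is that you choose a single $x_0$ working for all $q$ simultaneously, whereas the paper picks one element $m_q$ per prime power.
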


The assumption on $Z$ is satisfied in particular if $Z$ is a near UFD or a Dedekind domain.

\begin{proof} One may assume that $Z$ is infinite. Fix a real number $B>0$. For every prime power $q=\ell^r \leq B$, 
pick an element $m_q\in Z$ such that $m_q^q - m_q \not=0$. Let $a$ be the product of all elements $m_q^q - m_q$ with $q$ running over all prime powers $q \leq B$. From the assumption on $Z$, the list, say ${\mathcal D}_a$, of all prime divisors of $a$ (modulo units), is finite.

Consider now a prime $p\in Z$ such that $|Z/pZ| \leq B$. The integral domain $Z/pZ$, being finite, is a field. Hence  $|Z/pZ|$ is a prime power $q= \ell^r$ ; and $q\leq B$. Of course we have $m_q^q - m_q \equiv 0 \pmod{p}$. Hence $p$ divides $a$, \hbox{i.e.} $p\in {\mathcal D}_a$. \end{proof}

\subsection{A reduction lemma} \label{ssec:reduction} 
This lemma is a central tool of the proof of Theorem \ref{thm:main_theorem_gen}.

\begin{lemma}\label{lem:reduction1} Let $Z$ be an integral domain such that every nonzero $a\in Z$ has only finitely many prime divisors (modulo units).
Let $k,s\geq 1$ be two integers. Assume that property ${\mathcal PriSpe}^{\displaystyle \star}(1,1)$ holds.
Then property ${\mathcal PriSpe}^{\displaystyle \star}(k,s)$ holds for all integers $k,s\geq 1$.
\end{lemma}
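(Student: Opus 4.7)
The plan is to reduce from general $s$ to $s=1$, and then induct on $k$ using the hypothesis ${\mathcal PriSpe}^{\displaystyle \star}(1,1)$ as base. For the $s$-reduction, set $P:=P_1\cdots P_s$. Gauss's lemma in the UFD $Q[\underline t]$ gives primitivity of $P$ w.r.t.\ $Q[\underline t]$; the fixed-prime-divisor hypothesis is unchanged; and any prime of $Z$ dividing some $P_i(\underline m,\underline y)$ divides $P(\underline m,\underline y)$. Hence a specialization making $P(\underline m,\underline y)$ free of prime divisors in $Z$ works simultaneously for every $P_i$, and it suffices to prove ${\mathcal PriSpe}^{\displaystyle \star}(k,1)$ for every $k\ge 1$.

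For the inductive step $(k-1,1)^\star\Rightarrow(k,1)^\star$, given $P\in Z[\underline t,\underline y]$ and $P_0\in Z[\underline t]$, the idea is to fix $m_1,\ldots,m_{k-1}$ so that $\tilde P(t_k,\underline y):=P(m_1,\ldots,m_{k-1},t_k,\underline y)$ and $\tilde P_0(t_k):=P_0(m_1,\ldots,m_{k-1},t_k)$ satisfy the hypotheses of $(1,1)^\star$; the progression $\tau_k$ then comes from one final application of $(1,1)^\star$ to $(\tilde P,\tilde P_0)$. Set $D:=\deg_{t_k}(P)$. By Lemma \ref{lem:cond-star}, only finitely many non-associate primes $p_1,\ldots,p_N$ of $Z$ satisfy $|Z/p_jZ|\le D$. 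For each such $p_j$, the fixed-prime-divisor hypothesis on $P$ yields $(\bar m_1^{(j)},\ldots,\bar m_k^{(j)})\in Z^k$ with $P(\bar m_1^{(j)},\ldots,\bar m_k^{(j)},\underline y)\not\equiv 0\pmod{p_j}$, and Lemma \ref{lemma:3} (Chinese Remainder on the pairwise comaximal maximal ideals $p_jZ$) produces $\underline m^\dagger=(m_1^\dagger,\ldots,m_{k-1}^\dagger)\in Z^{k-1}$ satisfying $m_i^\dagger\equiv\bar m_i^{(j)}\pmod{p_j}$ for all $i,j$. Setting $M:=p_1\cdots p_N$, consider the shifted polynomial
\[P^\dagger(\underline z,t_k,\underline y):=P(\underline m^\dagger+M\underline z,t_k,\underline y),\qquad \underline z=(z_1,\ldots,z_{k-1}).\]
The substitution $\underline t=\underline m^\dagger+M\underline z$ is a $Q$-algebra automorphism of $Q[t_1,\ldots,t_{k-1}]\cong Q[\underline z]$, so $P^\dagger$ inherits primitivity w.r.t.\ $Q[\underline z]$ from $P$'s primitivity w.r.t.\ $Q[t_1,\ldots,t_{k-1}]$. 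And $P^\dagger$ has no fixed prime divisor w.r.t.\ $\underline z$: for $p\notin\{p_j\}$, $M$ is a unit mod $p$, so $\underline z\mapsto\underline m^\dagger+M\underline z$ surjects onto $(Z/pZ)^{k-1}$ and a fixed divisor of $P^\dagger$ would lift to one of $P$; for each $p_j$, $P^\dagger\equiv P(\underline m^\dagger,t_k,\underline y)\pmod{p_j}$ is nonzero at $t_k=\bar m_k^{(j)}$ by construction.

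Apply $(k-1,1)^\star$ to $P^\dagger$ with auxiliary polynomial $P_0^\dagger(\underline z):=D_0(\underline m^\dagger+M\underline z)\cdot R(\underline m^\dagger+M\underline z)$, where $D_0\in Z[t_1,\ldots,t_{k-1}]$ is any nonzero coefficient of $P_0$ regarded in $Z[t_1,\ldots,t_{k-1}][t_k]$ (so $D_0(m_1,\ldots,m_{k-1})\ne 0$ forces $\tilde P_0\ne 0$), and $R\in Z[t_1,\ldots,t_{k-1}]$ is a nonzero polynomial whose non-vanishing at $(m_1,\ldots,m_{k-1})$ forces $\tilde P$ to be primitive w.r.t.\ $Q[t_k]$. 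Such an $R$ exists because the coefficients $\{C_\beta\}$ of $P$ in $Q[\underline t]$ (w.r.t.\ $\underline y$-monomials) are coprime, so the common zero locus $V(\{C_\beta\})\subset\overline Q^k$ has codimension $\ge 2$; its projection to $\overline Q^{k-1}$ lies in a proper closed subvariety cut out by some $R$ (cleared of denominators to land in $Z[t_1,\ldots,t_{k-1}]$). The induction returns $(z_1^*,\ldots,z_{k-2}^*)\in Z^{k-2}$ and an arithmetic progression $\tau_{k-1}'$; fixing any admissible $z_{k-1}\in\tau_{k-1}'$ and setting $m_i:=m_i^\dagger+Mz_i^*$ for $i<k-1$, $m_{k-1}:=m_{k-1}^\dagger+Mz_{k-1}$, we obtain $\tilde P$ that is $Z$-primitive (from the induction), $Q[t_k]$-primitive (via $R\ne 0$), with $\tilde P_0\ne 0$ (via $D_0\ne 0$) and no fixed prime divisor w.r.t.\ $t_k$: large primes would have to divide $\tilde P$ itself by Lemma \ref{lem:prelim-proof-R[u]}(a) but cannot by $Z$-primitivity, while small primes $p_j$ are excluded by the CRT construction. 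A final $(1,1)^\star$ on $(\tilde P,\tilde P_0)$ then delivers $\tau_k$.

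The main obstacle is precisely the collapse of infinitely many prime-divisor obstructions into a single inductive application: the threshold split at $D$ (finitely many ``small'' primes by Lemma \ref{lem:cond-star}, each killed by the CRT choice of $\underline m^\dagger$ propagated by the shift of step $M$; all other ``large'' primes reduced to $Z$-primitivity via Lemma \ref{lem:prelim-proof-R[u]}(a)) is the device that lets the whole problem feed into $(k-1,1)^\star$, with the auxiliary polynomial $P_0^\dagger$ bundling the remaining $Q[t_k]$-primitivity and $P_0\ne 0$ requirements into a single non-vanishing condition.
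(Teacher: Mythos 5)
Your overall strategy is the same as the paper's: reduce from $s$ polynomials to one via Gauss's lemma, induct on $k$, use Lemma \ref{lem:cond-star} plus CRT to neutralize the finitely many ``small'' primes by an affine shift $\underline t'=\underline m^\dagger+M\underline z$, bundle the $Q[t_k]$-primitivity and $P_0\ne 0$ requirements into the auxiliary polynomial, and finish with one application of $(1,1)^{\star}$. (Your $R$ from the codimension-$\ge 2$ zero locus plays the role of the paper's B\'ezout denominator $\Delta$; that variant is fine.)

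There is, however, a genuine gap in your verification that $P^\dagger$ has no fixed prime divisor w.r.t.\ $\underline z$, in the case $p\notin\{p_1,\dots,p_N\}$. You assert that $M$ is a unit mod $p$, so that $\underline z\mapsto \underline m^\dagger+M\underline z$ surjects onto $(Z/pZ)^{k-1}$ and a fixed divisor of $P^\dagger$ lifts to one of $P$. But $Z/pZ$ is only an integral domain, not a field, for a general $Z$ covered by the lemma: take $Z=\Zz[x]$ and $p=x$, so $Z/pZ\cong\Zz$, where a nonzero non-unit such as $M=2$ is not invertible and multiplication by $M$ is not surjective. So the image of your affine map is a proper coset and you cannot conclude that $p$ is a fixed divisor of $P$. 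A second, related problem is that your threshold $D=\deg_{t_k}(P)$ is too small to repair this by the degree argument: the paper instead puts into the ``small'' set all primes with $|Z/pZ|\le\max_{1\le h\le k}\deg_{t_h}(P)$, observes that for the remaining $p$ the polynomial $P^\dagger$ is nonzero mod $p$ (the substitution is invertible over the \emph{fraction field} of $Z/pZ$ since $M\not\equiv 0$, and $p\nmid P$ because otherwise $p$ would be a fixed divisor of $P$), and then invokes Lemma \ref{lem:prelim-proof-R[u]}(a): a fixed prime divisor not dividing the polynomial must have norm at most $\max_h\deg_{z_h}(P^\dagger)=\max_{1\le h\le k-1}\deg_{t_h}(P)$, contradicting $p$ being ``large''. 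Enlarging your prime set accordingly and replacing the surjectivity claim by this argument closes the gap; the rest of your proof (the treatment of the $p_j$, the induction bookkeeping, and the final $(1,1)^{\star}$ step) is sound.
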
 

This reduction is explained in the proof of \cite[Theorem 1]{schinzel2002} (see pages 242--243), with two differences. First, Schinzel uses the coprime Hypothesis formulation instead of the Primitive Specialization one (from Lemma \ref{lem:reform}, they are equivalent if $Z$ is a near UFD). Secondly, Schinzel works 
over $Z=\Zz$. Our proof adapts his arguments to the Primitive Specialization formulation and shows that they carry over to 
our more general domains $Z$.

\begin{proof}[Proof of Lemma \ref{lem:reduction1}]
Observe that using the same argument as for Lemma \ref{lem:reform}(3), one can reduce from $s$ to one polynomial $P(\underline t, \underline y)$: ${\mathcal PriSpe}^{\displaystyle \star}(k,1) \Leftrightarrow {\mathcal PriSpe}^{\displaystyle \star}(k,s)$. Thus we merely need to prove that if ${\mathcal PriSpe}^{\displaystyle \star}(1,1)$ holds then so does ${\mathcal PriSpe}^{\displaystyle \star}(k,1)$ ($k\geq 1$).

Assume that ${\mathcal PriSpe}^{\displaystyle \star}(1,1)$ holds. By the induction principle, we need to prove that
\vskip 0,5mm

\centerline{${\mathcal PriSpe}^{\displaystyle \star}(k-1,1) \Rightarrow {\mathcal PriSpe}^{\displaystyle \star}(k,1)$ for $k\geq 2$.}
\vskip 0,5mm

\noindent
Assume ${\mathcal PriSpe}^{\displaystyle \star}(k-1,1)$ and  let $n$, 
$P(\underline t,\underline y)$ and $P_0(\underline t)$ be as in  ${\mathcal PriSpe}^{\displaystyle \star}(k,1)$.

One may assume that $\deg_{t_k}(P) >0$. 
Let ${\mathcal P}$ be the set of all primes $p\in Z$ (modulo units) such that $|Z/pZ| \leq \max_{1\leq h \leq k} \deg_{t_h}(P)$. 
By Lemma \ref{lem:cond-star}, the set ${\mathcal P}$ is finite. Let $\pi$ be the product of its elements.

The first step is to construct a $k$-tuple $\underline u= (u_{1},\ldots,u_{k}) \in Z^k$ such that 
\vskip 1mm

\noindent
(1)  \hskip 30mm $P(\underline u,\underline y) \not\equiv 0 \pmod{p}$ for every $p\in {\mathcal P}$.

\vskip 1mm

By assumption, no prime of $Z$ is a fixed divisor of $P$ \hbox{w.r.t.} $\underline t$. Thus for every $p\in {\mathcal P}$,  there exists a $k$-tuple $\underline u_p = (u_{p1}, \ldots, u_{pk}) \in Z^k$ such that $P(\underline u_p,\underline y)\not\equiv 0 \pmod{p}$.
Denote by ${\mathcal P}_1 \subset {\mathcal P}$ the subset of primes $p$ such that the ideal $pZ$ is maximal in $Z$.
We now apply Lemma \ref{lemma:3}. From above, $P$ is nonzero modulo each $p\in {\mathcal P}\setminus {\mathcal P}_1$, and we have $pZ \not\subset p^\prime Z$ for any distinct 
$p,p^\prime \in {\mathcal P}$. Thus
Lemma \ref{lemma:3} provides a  $k$-tuple $\underline u_0= (u_{01},\ldots,u_{0k}) \in Z^k$ such that
 $\underline u_0 \equiv \underline u_p \pmod{p}$ for every prime $p\in{ \mathcal P}_1$ and 
$P(\underline u_0,\underline y)  \not\equiv 0 \pmod{p}$ for every $p\in {\mathcal P}\setminus {\mathcal P}_1$. These congruences imply that $P(\underline u_0,\underline y) \not\equiv 0 \pmod{p}$ for every $p\in {\mathcal P}$.
Furthermore, denoting by $\tau_h$ the arithmetic progression $\tau_h= (\pi \ell + u_{0h})_{\ell\in Z}$ ($h=1,\ldots,k$), conclusion (1) holds for every $\underline u = (u_1,\ldots,u_k)\in \tau_1 \times\cdots \times \tau_k$. Fix such a $k$-tuple $\underline u$.

Consider the following polynomial, where $\underline v^\prime = (v_1,\ldots,v_{k-1})$ is a tuple of new variables, and ${\underline u}^\prime = (u_1,\ldots, u_{k-1})$:

\vskip 1mm

\centerline{$\widetilde P (\underline v^\prime, t_k,\underline y)= P(\pi \hskip 1pt \underline v^\prime+{\underline u}^\prime, t_k, \underline y) \in Z[\underline v^\prime,t_k,\underline y]$.}
\vskip 1mm

\noindent
We check below that as a polynomial in the $n+1$ variables $t_k,\underline y$ with coefficients in $Z[\underline v^\prime]$, it satisfies the assumptions allowing using the induction hypothesis ${\mathcal PriSpe}^{\displaystyle \star}(k-1,1)$.

Clearly $\widetilde P$ is nonzero. Set $\underline t^\prime = (t_1,\ldots,t_{k-1})$. 
The polynomial $P$ is primitive \hbox{w.r.t.} $Q[\underline t^\prime]$. Hence $\widetilde P$ is primitive \hbox{w.r.t.} $Q[\underline v^\prime]$. 
Also note that by (1), we have, for every $p\in {\mathcal P}$,
\vskip 1,5mm

\noindent
(2) \hskip 20mm $\widetilde P(\underline \ell^\prime, u_k,\underline y) \not\equiv 0 \pmod{p}$  \hskip 2mm for every $\underline \ell^\prime \in Z^{k-1}$. 
\vskip 1,5mm

\noindent
The next paragraph shows that $\widetilde P$ has no fixed prime divisor in $Z$ \hbox{w.r.t.} $\underline v^\prime$.

Assume that  
there is a prime $p\in Z$ such that $\widetilde P(\underline \ell^\prime,t_k,\underline y) \equiv 0 \pmod{p}$ for every $\underline \ell^\prime \in Z^{k-1}$. It follows from (2) that $p\notin {\mathcal P}$. This gives that for every $h=1,\ldots,k-1$, we have $\pi v_h + u_h \not\equiv 0 \pmod{p}$ (as a polynomial in $v_h$). 
This, conjoined with 
$P(\underline t^\prime,t_k,\underline y)=P(\underline t,\underline y) \not\equiv 0 \pmod{p}$ 
shows that $P(\pi \underline v^\prime + {\underline u}^\prime,t_k,\underline y)\not\equiv 0 \pmod{p}$. In other words, $\widetilde P$
is nonzero modulo $p$.
 A contradiction then follows from Lemma \ref{lem:prelim-proof-R[u]}(a) and 
\vskip 0,5mm

\centerline{$\displaystyle |Z/pZ|>\max_{1\leq h\leq k-1}(\deg_{t_h}(P)) = \max_{1\leq h\leq k-1}(\deg_{v_h}(\widetilde P))$.}

\vskip 0,5mm

We will apply assumption ${\mathcal PriSpe}^{\displaystyle \star}(k-1,1)$ to $\widetilde P\in Z[\underline v^\prime] [t_k,\underline y]$, and for the following choice of a nonzero polynomial $\widetilde P_0 \in Z[\underline v^\prime]$. The polynomial $P$ is primitive \hbox{w.r.t.} $Q[\underline t]$. Thus if $\{P_{j}(\underline t) \hskip 1pt | \hskip 1pt j\in J\}$ is the set of coefficients of $P$ (viewed as a polynomial in $\underline y$), by writing a {B\'ezout} relation in the PID $Q(\underline t^\prime)[t_k]$ and then clearing the denominators, we obtain elements $A_{j}\in Z[\underline t]$ and $\Delta\in Z[\underline t^\prime]$, $\Delta\not=0$, such that 
\vskip 0,5mm

\noindent
(3) \hskip 30mm $\displaystyle \sum_{j\in J} A_{j}(\underline t^\prime,t_k) P_{j}(\underline t^\prime,t_k) = \Delta(\underline t^\prime)$. 
\vskip 0,5mm

\noindent
Set then 
\vskip 0,5mm

\centerline{$\displaystyle  \widetilde P_0 (\underline v^\prime) = P_{0 \infty}(\pi \underline v^\prime + \underline u^\prime) \times  \Delta(\pi \underline v^\prime + {\underline u}^\prime)$}

\vskip 0,5mm

\noindent
where $P_{0 \infty}(\underline t^\prime)\in Z[\underline t^\prime]$ is the leading coefficient of $P_0$ viewed as a polynomial in $t_k$.

From ${\mathcal PriSpe}^{\displaystyle \star}(k-1,1)$, there exists 
$\underline \ell^\prime=(\ell_1,\ldots,\ell_{k-1})\in Z^{k-1}$ such that  
\vskip 1mm

\noindent
{\rm (4)} the polynomial $\widetilde P(\underline \ell^\prime,t_k,\underline y) = P(\pi \underline \ell^\prime +{\underline u}^\prime,t_k, \underline y)$  \hbox{has no prime divisors in $Z$,}
\vskip 1mm

\noindent
and
\vskip 1mm

\noindent
{\rm (5)} \hskip 20mm $\widetilde P_0(\underline \ell^\prime) = P_{0 \infty}(\pi \underline \ell^\prime + \underline u^\prime) \times\Delta(\pi \underline \ell^\prime+ {\underline u}^\prime) \not= 0$.
\vskip 1mm

It follows from (3) and (5) that the polynomial $\widetilde P(\underline \ell^\prime,t_k,\underline y)$ is nonzero and primitive \hbox{w.r.t.} $Q[t_k]$. We check below that $\widetilde P(\underline \ell^\prime, t_k, \underline y)$ has no fixed prime divisor 
in $Z$ \hbox{w.r.t.} the variable $t_k$.

Assume that for some prime $p\in Z$, we have $\widetilde P(\underline \ell^\prime, m_k, \underline y) \equiv 0 \pmod{p}$ for every $m_k \in Z$. 
In view of (1), we have $p\notin {\mathcal P}$. Hence, by choice of ${\mathcal P}$, we have 
\vskip 1mm

\noindent
(6) \hskip 20mm $|Z/pZ| > \deg_{t_k}(P) = \deg_{t_k}(\widetilde P) \geq \deg_{t_k} \left(\widetilde P(\underline \ell^\prime, t_k, \underline y) \right)$
\vskip 1mm

\noindent
By (4), the polynomial $\widetilde P(\underline \ell^\prime,t_k,\underline y)$ is nonzero modulo $p$ (\hbox{i.e.} nonzero in $(Z/pZ)[t_k,\underline y]$). This, conjoined with (6), contradicts Lemma \ref{lem:prelim-proof-R[u]}(a).

Use next assumption ${\mathcal PriSpe}^{\displaystyle \star}(1,1)$  to conclude that 
there exists an arithmetic progression $\tau_k=(\omega_k \ell+\alpha_k)_{\ell \in Z}$, with $\omega_k, \alpha_k \in Z$, $\omega_k\not=0$, such that for every $m_k\in \tau_k$, the 
polynomial 
\vskip 1mm

\noindent
\hskip 30mm $\widetilde P(\underline \ell^\prime, m_k, \underline y)= P(\pi \underline \ell^\prime + {\underline u}^\prime,m_k, \underline y)$
\vskip 1mm

\noindent
has no prime divisors in $Z$.
Furthermore, taking into account that $P_{0 \infty}(\pi \underline \ell^\prime + \underline u^\prime) \not=0$ (by (5)), we have $P_0(\pi \underline \ell^\prime + {\underline u}^\prime,m_k)\not=0$ for all finitely many $m_k\in \tau_k$. The requested conclusion is thus proved for $(m_1,\ldots,m_{k-1}) = \pi \underline \ell^\prime + {\underline u}^\prime$ and the arithmetic progression $\tau_k$.
\end{proof}

\subsection{Proof of Theorem \ref{thm:main_theorem_gen}}  \label{ssec:proof_k=1} 

\subsubsection{Proof of (a)} \label{pf-of-(a)}
Let $Z$ be a near UFD or a Dedekind domain. By Lemma \ref{lem:reduction1}, proving  ${\mathcal PriSpe}^{\displaystyle \star}(1,1)$ will give 
${\mathcal PriSpe}^{\displaystyle \star}(k,s)$ for all integers $k,s\geq 1$, \hbox{i.e.} the starred Primitive Specialization Hypothesis.
\vskip 0,5mm

\noindent
{\it 1st case:} {\it $Z$ is a near UFD.} 
Let $P(t,\underline y)$ be as in ${\mathcal PriSpe}^{\displaystyle \star}(1,1)$. From Remark \ref{rem:technical_point}(1), one may assume that $P$ is not a monomial in $\underline y$. Let $\delta\in Z$ be the parameter associated in \S \ref{ssec:preliminaries} with the nonzero coefficients $P_{1}(t),\ldots,P_{\ell}(t) \in Z[t]$ of $P(t,\underline y)$ viewed as a polynomial in $\underline y$. 

Let $p_1,\dots, p_r$ be the prime divisors  of $\delta$ (distinct modulo units).  
From condition ${\mathcal F}_{t}(P)=\emptyset$, for every $h=1,\ldots,r$, there exists $m_h\in Z$ such that $P(m_h,\underline y)\not\equiv 0 \pmod{p_h}$.

We may assume without loss of generality that, for some $\rho\in \{0,1,\ldots,r\}$, the ideals $p_1Z, \dots, p_\rho Z$ 
are maximal in $Z$, whereas the ideals $p_{\rho+1}Z, \dots, p_{r}Z$ are not. 
From above, $P$ is nonzero modulo each $p_h$, $h=\rho+1,\ldots,r$, and we have $p_iZ \not\subset p_{i^\prime} Z$ for any distinct 
$i,i^\prime \in  \{1,\ldots,r\}$. 
Lemma \ref{lemma:3} (with $\underline t=t$), applied with $I_h = p_hZ$, $h=1,\ldots,r$, yields that there exists an element $m_0\in Z$ such that $m_0 \equiv m_h \pmod{p_h}$, $h=1,\ldots,\rho$, and $P(m_0,\underline y) \not\equiv 0 \pmod{p_h}$, $h=\rho+1,\ldots,r$. These congruences imply that $P(m_0,\underline y) \not\equiv 0 \pmod{p_h}$, for each $h=1,\ldots,r$.

Conclude that the polynomial $P(m_0,\underline y)$ is primitive \hbox{w.r.t.} $Z$. Indeed assume that some non-unit $a\in Z$ divides $P(m_0,\underline y)$. From Lemma \ref{lemma:1}, $a$ then divides $\delta$. But using that $Z$ is a near UFD, we obtain that some prime divisor of $a$ divides $\delta$ and $P(m_0,\underline y)$, contrary to what we have established.  
Furthermore, our conclusion ``{\it $P(m,\underline y)$ is primitive \hbox{w.r.t.} $Z$}'' is not only true for $m=m_0$, but also for every $m$ in the arithmetic progression $(\delta \ell + m_0)_{\ell\in Z}$.

\begin{remark}\label{rem:plus2} 
 More generally, if several polynomials $P_i(t,\underline y)$ are given as in ${\mathcal PriSpe}(1,n,s)$, then, denoting the corresponding $\delta$-parameters by $\delta_1,\ldots,\delta_s$, the conclusion of the Primitive Specialization Hypothesis ``{\it $P_1(m,\underline y),\ldots,P_s(m,\underline y)$ are primitive \hbox{w.r.t.} $Z$}'' holds for every $m$ in some arithmetic progression $(\omega \ell + \alpha)_{\ell\in Z}$. Specifically one can take $\omega,\alpha\in Z$ such that every prime divisor $p$ of $\delta= \delta_1\cdots \delta_s$ divides $\omega$ and satisfies $\prod_{i=1}^s P_i(\alpha ,\underline y) \not\equiv 0 \pmod{p}$. 
\end{remark}

\vskip 0,5mm

\noindent
{\it 2nd case:} {\it $Z$ is a Dedekind domain.} 
We noted in Proposition \ref{rem:plus} that the proof from \S \ref{ss:ded} of the  Dedekind domain part of Theorem \ref{thm:main_theorem} gives 
a more precise form of ${\mathcal CopSch}(1,1)$. Namely it produces a whole arithmetic progression, the elements of which satisfy
the conclusion of the ${\mathcal CopSch}(1,1)$ (Proposition \ref{rem:plus}(a)). Furthermore, Proposition \ref{rem:plus}(b) shows
that the same holds if non-unit divisors are replaced by prime divisors in the statement of the property. Finally, adjusting the 
observation made in Lemma \ref{lem:reform}(1) to show that ``Coprime Schinzel''  is stronger than ``Primitive Specialization'', what 
we eventually obtain from this chain of arguments is that the proof from \S \ref{ss:ded} indeed gives 
property ${\mathcal PriSpe}^{\displaystyle \star}(1,1)$.

\subsubsection{Proof of (b)} This clearly follows from (a) as for near UFDs, the {\it starred} Primitive Specialization Hypothesis is stronger than the Primitive Specialization Hypothesis,
and that by Lemma \ref{lem:reform}(2-a), the latter and the coprime Schinzel Hypothesis are equivalent.

\subsubsection{Proof of (c)} Let $Z$ be a Dedekind domain.
As already pointed out, the second part of (c), \hbox{i.e.} ${\mathcal CopSch}(1,1)$, is the Dedekind domain part of Theorem \ref{thm:main_theorem} proved in  \S \ref{ss:ded}. By Lemma \ref{lem:reform}(1), we deduce ${\mathcal PriSpe}(1,n,1)$ for every $n\geq 1$. Combined with Lemma \ref{lem:reform}(3), we obtain ${\mathcal PriSpe}(1,n,s)$ for all integers $n,s\geq 1$. $\qed$

\section{The Hilbert-Schinzel specialization property} \label{ssec:SH_more_general_rings} 
The goal of this section is to show Theorem \ref{thm-intro}. We distinguish two cases: $k=1$ in \S \ref{ssec:final_k=1} and $k\geq 1$ in \S \ref{ssec:final_k bigger}. We consider refinements of Theorem \ref{thm-intro} in \S \ref{ssec:refine}.
\vskip 1mm

A new assumption on $Z$ in this section is that it is a {\it Hilbertian ring}. 

\begin{definition} \label{def:HilbertUFD} Let $Z$ be an integral domain such that the fraction field $Q$ is of characteristic $0$ or imperfect. The ring $Z$ is  a {\it Hilbertian ring} if the following holds: 

\noindent
Let $\underline t=(t_1,\ldots,t_k)$, $\underline y = (y_1,\ldots, y_n)$ be tuples of variables $(k, n\geq 1)$, $P_1(\underline t,\underline y),\ldots, P_s(\underline t,\underline y)$ be $s$ polynomials ($s\geq 1$), irreducible in $Q[\underline t,\underline y]$, of degree at least $1$ in $\underline y$, and  $F(\underline t) \in Q[\underline t]$ be a nonzero polynomial. Then the so-called Hilbert subset 
\vskip 1mm

\centerline{$H_Q(P_1,\ldots,P_s;F)= \left\{\underline m\in Q^k \hskip 1pt \left |\hskip 2pt \begin{matrix} P_i(\underline m,\underline y)\hskip 1mm  \hbox{is irreducible in $Q[\underline y]$}\hfill \\
\hfill (i=1,\ldots,s) \hfill\\
\hbox{and} \hskip 1mm F(\underline m)\not= 0. \hfill \\
\end{matrix}\right. \right\}$}

\vskip 1mm

\noindent
contains a $k$-tuple 
 $\underline m \in Z^k$. 
If $Z$ is a field, \hbox{it is called a {\it Hilbertian field}.} \end{definition}

The original definition of Hilbertian ring from \cite[\S 13.4]{FrJa} has the defining condition only requested for $n=1$ and polynomials $P_1(\underline t,y_1),\ldots P_s(\underline t,y_1)$ that are further assumed to be separable in $y_1$. But
\cite[Proposition 4.2]{BDN19} shows that it is equivalent to Definition \ref{def:HilbertUFD} under the imperfectness condition. Note further that since Zariski open subsets of Hilbert subsets remain Hilbert subsets, it is equivalent to require in Definition \ref{def:HilbertUFD} that $H_Q(P_1,\ldots,P_s;F)$ contains a Zariski-dense subset of tuples $\underline m\in Z^k$. In particular, a Hilbertian ring $Z$ is necessarily infinite.
\vskip 1mm

For the proof of Theorem \ref{thm-intro}, note that one may assume that 
none of the polynomials $P_1,\ldots,P_s$ are monomials in $\underline y$ (with coefficients in $Z[\underline t]$): to fulfill the assumptions of Theorem \ref{thm-intro}, such a monomial must be of the form $c y_i$ for some $c\in Z^\times$ and $i\in \{1,\ldots,n\}$; the required conclusion for this monomial then trivially holds for every $\underline m\in Z^k$.

\subsection{Proof of Theorem \ref{thm-intro} - case $k=1$} \label{ssec:final_k=1} We will prove more generally that the Hilbert-Schinzel property holds with $k=1$ and 
given integers $n,s\geq 1$ if $Z$ is a Hilbertian ring and the Primitive Specialization Hypothesis holds for $Z$ with $k=1$ and the integers $n$ and $s$. From Theorem \ref{thm:main_theorem_gen}, the latter holds if $Z$ is a near UFD or a Dedekind domain.

Let $P_1(t,\underline y), \ldots, P_s(t,\underline y)$ be $s$ \hbox{polynomials}, irreducible in $Q[t,\underline y]$, primitive \hbox{w.r.t.} $Z$, of degree $\geq 1$ in $\underline y$ and such that ${\mathcal F}_{t}(P_1\cdots P_s) = \emptyset$. The polynomials $P_1(t,\underline y), \ldots, P_s(t,\underline y)$ are also primitive \hbox{w.r.t.} $Q[t]$, as they are irreducible in $Q[t,\underline y]$.
The Primitive Specialization Hypothesis with $k=1$ provides an element $\alpha \in Z$ such that $P_1(\alpha,\underline y), \ldots, P_s(\alpha,\underline y)$ are primitive \hbox{w.r.t.} $Z$.
Let $\delta_i$ be the parameter associated  in Lemma \ref{lemma:1} with the coefficients $P_{ij}(t) \in Z[t]$ of  $P_i$ ($i=1,\ldots,s$), and let $\omega$ be a multiple of $\delta=\delta_1\cdots \delta_ s$. From Lemma \ref{lemma:1}, for every $\ell \in Z$, the polynomials 
\vskip 1mm

\centerline{$P_{1}(\alpha+ \ell \omega,\underline y),\ldots,P_{s}(\alpha+ \ell \omega,\underline y)$}
\vskip 1mm

\noindent
are primitive \hbox{w.r.t.} $Z$. Consider next the polynomials 
\vskip 1mm

\centerline{$P_1(\alpha+\omega t,\underline y), \ldots, P_s(\alpha+\omega t,\underline y)$.}
\vskip 1mm

\noindent
They are in $Z[t,\underline y]$ and are irreducible in $Q[t,\underline y]$. As $Z$ is a Hilbertian ring, infinitely many $\ell \in Z$ exist such that the polynomials
\vskip 1mm

\centerline{
$P_1(\alpha+\omega \ell,\underline y), \ldots, P_s(\alpha+\omega \ell,\underline y)$}
\vskip 1mm

\noindent 
are irreducible in $Q[\underline y]$. From above, these polynomials are also primitive \hbox{w.r.t.} $Z$.
\vskip 1mm

\subsection{Proof of Theorem \ref{thm-intro} - case $k\geq 1$}  \label{ssec:final_k bigger}
 We may assume that we are in case (a), and $Z$ is a near UFD. Let $P_1(\underline t,\underline y), \ldots, P_s(\underline t,\underline y)$ be $s$ \hbox{polynomials}, irreducible in $Q[\underline t,\underline y]$, primitive \hbox{w.r.t.} $Z$, of degree $\geq 1$ in $\underline y$ and such that ${\mathcal F}_{\underline t}(P_1\cdots P_s) = \emptyset$. 
Fix a nonzero polynomial $P_0\in Z[\underline t]$. We need to produce a $k$-tuple $\underline m\in Z^k$ such that $P_1(\underline m,\underline y),\ldots, P_s(\underline m,\underline y)$ are irreducible in $Q[\underline y]$ and primitive \hbox{w.r.t.} $Z$, and $P_0(\underline m)\not=0$.
This clearly follows from successive applications of the following lemma to each of the variables $t_1,\ldots,t_k$.
\vskip 1mm

\begin{lemma}\label{lem:reduction}
If $Z$ is a near UFD and a Hilbertian ring, then $Z$ has the following property, for all $k,n,s\geq 1$:
\vskip 1mm

\noindent 
${\mathcal HilSch}^{\displaystyle \star}(k,n,s)$: With $\underline t=(t_1,\ldots,t_k)$, $\underline y=(y_1,\ldots,y_n)$, let $P_1(\underline t,\underline y), \ldots, P_s(\underline t,\underline y)$ be $s$ polynomials, irreducible in $Q[\underline t,\underline y]$, of degree $\geq 1$ in $\underline y$ and such that the product $P=P_1\cdots P_s$ has no fixed \underbar {prime} divisor in $Z$ \hbox{w.r.t.} $\underline t$.
Then there is an arithmetic progression $\tau = (\omega \ell + \alpha)_{\ell\in Z}$ ($\omega,\alpha\in Z$, $\omega \not=0$) such that, for infinitely many $m_1 \in \tau$, the polynomials
\vskip 1mm

\noindent
 \hskip 28mm $P_1(m_1,t_2, \ldots, t_{k},\underline y), \ldots, P_s(m_1, t_2, \ldots,t_{k},\underline y)$
\vskip 1mm

\noindent
are irreducible in $Q[t_2, \ldots, t_{k},\underline y]$, of degree $\geq 1$ in $\underline y$, and such that their product, \hbox{i.e.} the polynomial $P(m_1,t_2, \ldots, t_{k},\underline y)$, has no fixed \underbar{prime} divisor in $Z$ \hbox{w.r.t.} $(t_2, \ldots, t_{k})$. 
\end{lemma}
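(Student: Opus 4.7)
The plan is to combine the starred Primitive Specialization Hypothesis ${\mathcal PriSpe}^{\displaystyle \star}(1,1)$, which is available for the near UFD $Z$ by Theorem \ref{thm:main_theorem_gen}(a), with a preliminary Chinese-remainder adjustment to neutralize the ``small-residue'' primes, and then to apply the Hilbertianity of $Z$ to obtain the irreducibility conclusion.

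\emph{Step 1 (small-residue primes).} Set $P=P_1\cdots P_s$ and $d=\max_{1\le i\le k}\deg_{t_i}(P)$. By Lemma \ref{lem:cond-star}, the set $\mathcal{P}$ of primes of $Z$ (modulo units) with $|Z/pZ|\le d$ is finite, and each such $p$ is maximal since any finite integral domain is a field. Using $\mathcal{F}_{\underline t}(P)=\emptyset$, pick for every $p\in\mathcal{P}$ some $\underline n^{(p)}=(n_1^{(p)},\dots,n_k^{(p)})\in Z^k$ with $P(\underline n^{(p)},\underline y)\not\equiv 0\pmod p$. The Chinese Remainder Theorem, applied to the pairwise comaximal maximal ideals $\{pZ : p\in\mathcal{P}\}$, yields $m_1^{(0)}\in Z$ with $m_1^{(0)}\equiv n_1^{(p)}\pmod p$ for every $p\in\mathcal{P}$. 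Let $\omega_0 = \prod_{p\in\mathcal{P}} p$. Then for every $m_1$ in the progression $\tau_0 := (\omega_0\ell + m_1^{(0)})_{\ell\in Z}$ and every $p\in\mathcal{P}$, $P(m_1, n_2^{(p)},\dots,n_k^{(p)},\underline y)\equiv P(\underline n^{(p)},\underline y)\not\equiv 0\pmod p$, so no prime of $\mathcal{P}$ is a fixed prime divisor of $P(m_1, t_2,\dots,t_k,\underline y)$ w.r.t.\ $(t_2,\dots,t_k)$.

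\emph{Step 2 (Primitive Specialization on the shifted polynomial).} Let $\tilde P(\ell,t_2,\dots,t_k,\underline y) := P(\omega_0\ell + m_1^{(0)}, t_2,\dots,t_k,\underline y)$, viewed as a polynomial in $(t_2,\dots,t_k,\underline y)$ with coefficients in $Z[\ell]$. Since $t_1\mapsto \omega_0\ell + m_1^{(0)}$ is a $Q$-algebra automorphism and $P$ is primitive w.r.t.\ $Q[t_1]$ (being a product of polynomials irreducible in $Q[\underline t,\underline y]$ of positive degree in $\underline y$), $\tilde P$ is primitive w.r.t.\ $Q[\ell]$. I verify that $\tilde P$ has no fixed prime divisor w.r.t.\ $\ell$: for $p\in\mathcal{P}$, $\omega_0\equiv 0\pmod p$ and hence $\tilde P(\ell,\cdots)\equiv P(m_1^{(0)},\cdots)\not\equiv 0\pmod p$ for every $\ell$; for $p\notin\mathcal{P}$, $\gcd(\omega_0,p)=1$ so $\ell\mapsto \omega_0\ell+m_1^{(0)}$ is a bijection on $Z/pZ$, and were $\tilde P(\ell,\cdots)$ to vanish modulo $p$ for every $\ell\in Z$, each coefficient of $P$ in the $(t_2,\dots,t_k,\underline y)$-monomials would be a polynomial in $t_1$ of degree $\le d < |Z/pZ|$ vanishing on all of $Z/pZ$, hence $\equiv 0 \pmod p$, forcing $p\mid P$ and contradicting the fact that $\mathcal{F}_{\underline t}(P)=\emptyset$ implies $P$ is divisible by no prime of $Z$. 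Applying ${\mathcal PriSpe}^{\displaystyle \star}(1,1)$ to $\tilde P$ (with any fixed nonzero $P_0\in Z[\ell]$) gives an arithmetic progression $\tau_1 = (\omega_1\ell'+\alpha_1)_{\ell'\in Z}$ such that $\tilde P(\ell, t_2,\dots,t_k,\underline y)$ has no prime divisors in $Z$ for all but finitely many $\ell\in\tau_1$. Combined with Step 1, for each such $\ell$ and $m_1 = \omega_0\ell + m_1^{(0)}$, $P(m_1, t_2,\dots,t_k,\underline y)$ has no fixed prime divisor w.r.t.\ $(t_2,\dots,t_k)$: Lemma \ref{lem:prelim-proof-R[u]}(a) converts ``no prime divisor'' into ``no fixed prime divisor'' for primes $p\notin\mathcal{P}$ (as $|Z/pZ|>d$), while primes in $\mathcal{P}$ are already handled.

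\emph{Step 3 (Hilbertianity).} For $i=1,\dots,s$ set $R_i(\ell',t_2,\dots,t_k,\underline y) := P_i(\omega_0\omega_1\ell' + (\omega_0\alpha_1 + m_1^{(0)}), t_2,\dots,t_k,\underline y)$. Each $R_i$ is irreducible in $Q[\ell', t_2,\dots,t_k,\underline y]$, the substitution $t_1\mapsto \omega_0\omega_1\ell'+\cdots$ being a $Q$-algebra automorphism (note $\omega_0\omega_1\neq 0$), and is of degree $\ge 1$ in $\underline y$. Apply the Hilbertianity of $Z$ to $R_1,\dots,R_s$ with specialization variable $\ell'$, using an auxiliary $F(\ell')\in Z[\ell']$ that vanishes at the finitely many exceptional $\ell'$ from Step 2 and is chosen to force each specialization $R_i(\ell',\cdots)$ to retain a nonzero coefficient of positive $\underline y$-degree. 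This yields infinitely many $\ell'\in Z$ satisfying all requirements, and the corresponding $m_1 = \omega_0\omega_1\ell' + (\omega_0\alpha_1 + m_1^{(0)})$ lies in the arithmetic progression $\tau := (\omega_0\omega_1\ell' + (\omega_0\alpha_1+m_1^{(0)}))_{\ell'\in Z}$. The main difficulty is verifying the hypotheses of ${\mathcal PriSpe}^{\displaystyle \star}(1,1)$ for $\tilde P$—especially the no-fixed-prime-divisor condition at primes outside $\mathcal{P}$—and synchronizing the Chinese-remainder progression $\tau_0$ with the progression $\tau_1$ produced by ${\mathcal PriSpe}^{\displaystyle \star}$.
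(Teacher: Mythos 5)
Your proof is correct, and its skeleton matches the paper's: neutralize the finitely many ``small-norm'' primes by a Chinese Remainder adjustment, pass to a shifted polynomial $P(\omega_0 v+m_1^{(0)},\underline t^\prime,\underline y)$, apply Hilbertianity in the single variable to get irreducibility, and convert ``no prime divisor'' into ``no fixed prime divisor \hbox{w.r.t.} $\underline t^\prime$'' via the norm bound of Lemma \ref{lem:prelim-proof-R[u]}(a). The one genuine difference is in how the ``no prime divisors'' property along an arithmetic progression is obtained: the paper enlarges ${\mathcal P}$ to also contain the prime divisors of the B\'ezout parameter $\delta=\delta_1\cdots\delta_s$ of \S \ref{ssec:preliminaries} and then invokes Remark \ref{rem:plus2}, which yields the conclusion for \emph{every} $\ell_1$ in a single progression $(\omega\ell_1+\alpha)_{\ell_1\in Z}$; you instead call the already-proved ${\mathcal PriSpe}^{\displaystyle\star}(1,1)$ (Theorem \ref{thm:main_theorem_gen}(a)) as a black box, which only gives ``all but finitely many $\ell$'' in a further sub-progression and therefore forces you to compose two progressions and carry the finite exceptional set into the auxiliary polynomial $F$ at the Hilbertianity step. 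You handle this bookkeeping correctly, and the black-box route is non-circular since Theorem \ref{thm:main_theorem_gen} precedes this lemma; the cost is a slightly more involved final intersection, the benefit is that you never need to reintroduce $\delta$ explicitly. One small imprecision: for $p\notin{\mathcal P}$ of infinite norm, $Z/pZ$ need not be a field, so $\ell\mapsto\omega_0\ell+m_1^{(0)}$ is only \emph{injective} on $Z/pZ$, not a bijection; your argument survives because the image still has more than $d$ elements (or is infinite), so a nonzero polynomial of degree $\le d$ over the domain $Z/pZ$ cannot vanish on it --- this is exactly what Lemma \ref{lem:prelim-proof-R[u]}(a) packages, and citing it directly would be cleaner.
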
 

\noindent
(For the application to Theorem \ref{thm-intro}(a), note that for near UFDs, 
the fixed {prime} divisor condition, both in the assumption and the conclusion of the property, implies that each of the $s$ polynomials in question are primitive \hbox{w.r.t.} $Z$. Also note that
the extra condition in the general definition of Hilbert subsets that $F(\underline m) \not=0$ for some given nonzero polynomial $F\in Q[\underline t]$ is easily guaranteed: in the 
successive applications of Lemma \ref{lem:reduction} to each variable $t_i$, it suffices to exclude finitely many values $m_i$, $i=1,\ldots,k$).

\begin{proof}[Proof of Lemma \ref{lem:reduction}]
Set $\underline t^\prime =(t_2,\ldots,t_k)$. Consider $P_1,\ldots,P_s$ as polynomials in $\underline t^\prime, \underline y$ and with coefficients in $Z[t_1]$. As such, they are primitive \hbox{w.r.t} $Q[t_1]$ (being irreducible in $Q[\underline t,\underline y]$). 
For each $i=1,\ldots,s$, denote by $\delta_i\in Z$ the parameter associated in \S \ref{ssec:preliminaries} with the coefficients of $P_i$ (which are in $Z[t_1]$ and coprime).

Set $\delta =\delta_1\cdots \delta_s$ and let ${\mathcal P}$ be the set of primes $p\in Z$ that divide $\delta$ or such that $|Z/pZ| \leq \max_{2\leq h \leq k} \deg_{t_h}(P)$. 
From Lemma \ref{lem:cond-star},
the set ${\mathcal P}$ is finite (up to units). Let $\omega$ be the product of all primes in  ${\mathcal P}$.

The first step is to construct a $k$-tuple $\underline u= (u_{1},\ldots,u_{k}) \in Z^k$ such that 
\vskip 1mm

\noindent
(1)  \hskip 30mm $P(\underline u,\underline y) \not\equiv 0 \pmod{p}$ for every $p\in {\mathcal P}$.

\vskip 1mm

\noindent 
As $P$ has no fixed {prime} divisor \hbox{w.r.t.} $\underline t$, for every $p\in {\mathcal P}$,
there is a $k$-tuple $\underline u_p \in Z^k$ such that $P(\underline u_p,\underline y)\not\equiv 0 \pmod{p}$.
Using Lemma \ref{lemma:3} and arguing as in the proof of Lemma \ref{lem:reduction1} (1st step), one finds $\underline u_0= (u_{01},\ldots,u_{0k}) \in Z^k$ satisfying (1). Furthermore, denoting by $\tau_h$ the arithmetic progression $\tau_h= (\omega \ell + u_{0h})_{\ell\in Z}$ ($h=1,\ldots,k$), the conclusion holds for every $\underline u = (u_1,\ldots,u_k)\in \tau_1 \times\cdots \times \tau_k$. Fix such a $k$-tuple $\underline u$ and set $\alpha=u_1$.

It follows from the fixed prime divisor assumption \hbox{w.r.t.} $\underline t$ that $P$ has no fixed prime divisor \hbox{w.r.t.} the variable $t_1$.
From Remark \ref{rem:plus2}, we have that, for every $\ell_1\in Z$,

\vskip 1mm

\noindent
(2) \ the polynomials $P_i(\omega \ell_1+\alpha, \underline t^\prime, \underline y)$, $i=1,\ldots,s$, have no prime divisors in $Z$.
\vskip 1mm

\noindent
(Note that condition from Remark \ref{rem:plus2} that $P(\alpha, \underline t^\prime, \underline y) \not\equiv 0 \pmod{p}$
 and  every prime divisor of $\delta$ is guaranteed by (1)).

Consider the following polynomials, where $v_1$ is a new variable:
\vskip 1,5mm

\centerline{$\widetilde P_i (v_1,\underline t^\prime, \underline y)= P_i(\omega v_1+\alpha, \underline t^\prime, \underline y) \in Z[v_1,\underline t^\prime,\underline y],\hskip 2mm  i=1,\ldots,s$.}
\vskip 1mm

\noindent
The polynomials $\widetilde P_1,\ldots, \widetilde P_s$ are irreducible in $Q[\underline t^\prime][v_1,\underline y]$ and of degree at least $1$ in $\underline y$. As $Z$ is a Hilbertian ring, there exist infinitely many $\ell_1\in Z$ such that 

\vskip 1mm

\noindent
(3) the polynomials $\widetilde P_i(\ell_1,\underline t^\prime,\underline y)$ ($i=1,\ldots,s$) 
are irreducible in $Q[\underline t^\prime,\underline y]$, of degree $\geq 1$ in $\underline y$.
\vskip 1,5mm

Fix $\ell_1\in Z$ as in (3) and set $\widetilde P = \prod_{i=1}^s \widetilde P_i$.
To end the proof, it remains to check that $\widetilde P(\ell_1,\underline t^\prime,\underline y)$ has no fixed prime divisor
\hbox{w.r.t.} $\underline t^\prime$.
Assume that for some prime $p\in Z$, we have $\widetilde P(\ell_1, \underline m^\prime, \underline y) \equiv 0 \pmod{p}$ for every $\underline m^\prime \in Z^{k-1}$. Note that due to (1) we have
\vskip 1,5mm

\noindent
\hskip 28mm $\widetilde P(\ell_1, u_2, \ldots, u_k,\underline y) \not\equiv 0 \pmod{p}$, for every $p\in {\mathcal P}$. 
\vskip 1,5mm

\noindent
Therefore $p\notin {\mathcal P}$. Hence, by choice of ${\mathcal P}$, 
\vskip 1mm

\noindent
(4) \hskip 20mm $\displaystyle |Z/pZ| > \max_{2\leq h \leq k} \deg_{t_h}(P)  \geq  \max_{2\leq h \leq k} \deg_{t_h} \left(\widetilde P(\ell_1, \underline t^\prime, \underline y)\right)$
\vskip 1mm

\noindent
By (2), $\widetilde P_1(\ell_1, \underline t^\prime, \underline y), \ldots,\widetilde P_s(\ell_1, \underline t^\prime, \underline y)$ 
are nonzero modulo $p$ (in $(Z/pZ)[\underline t^\prime,\underline y]$). Hence so is their product $\widetilde P(\ell_1, \underline t^\prime, \underline y)$. This, conjoined with (4) contradicts Lemma \ref{lem:prelim-proof-R[u]}(a).
\end{proof}

\subsection{Variants of Theorem \ref{thm-intro}} \label{ssec:refine}

\subsubsection{Relaxing the fixed divisor assumption} \label{get-rid-SC}
Let $Z$ be a near UFD and $P_1,\ldots,P_s$ be as in Definition \ref{def:hilbert-schinzel}, except that ${\mathcal F}_{\underline t}(P_1\cdots P_s) = \emptyset$ is no longer assumed. By Lemma \ref{lem:prelim-proof-R[u]}, the set of primes in ${\mathcal F}_{\underline t}(P_1\cdots P_s)$ is finite (modulo units). Let $\varphi$ be the product of them. One can then conclude that 
\vskip 1,5mm

\noindent
(*)  {\it there is a Zariski-dense subset $H\subset Z[1/\varphi]^k$ such that for every $\underline m \in H$, the polynomials $P_1(\underline m,\underline y),\ldots, P_s(\underline m,\underline y)$ are irreducible in $Q\hskip 1pt [\underline y\hskip 1pt]$ and primitive \hbox{w.r.t.} $Z[1/\varphi]$.}
\vskip 1,5mm

\noindent
Of course, if ${\mathcal F}_{\underline t}(P_1\cdots P_s) = \emptyset$, then $\varphi=1$ and we merely have Theorem \ref{thm-intro}. Conversely, the improved conclusion follows from Theorem \ref{thm-intro} by just taking $Z$ to be $Z[1/\varphi]$: simply note that the assumptions on $Z$ are preserved by passing to  $Z[1/\varphi]$.

\begin{remark} 
One can avoid inverting $\varphi$ and still not assume ${\mathcal F}_{\underline t}(P_1\cdots P_s) = \emptyset$, but then specializing to points $\underline m\in Z^k$ should be replaced by specializing to points $\underline m(\underline y)\in Z[\underline y]^k$: a Zariski-dense subset of $\underline m(\underline y)\in Z[\underline y]^k$ exists such that $P_1(\underline m(\underline y),\underline y), \ldots, P_s(\underline m(\underline y),\underline y)$ are irreducible in $Z[\underline y]$ \cite[Theorem 1.1]{BDN19}.
 \end{remark}
 
 \subsubsection{A variant of Theorem \ref{thm-intro} for Dedekind domains with $k\geq 1$} Compared to Theorem \ref{thm-intro}(b) where $k=1$,
 the following result, for  Dedekind domains, has $k\geq 1$ but $s=1$, \hbox{i.e.} concerns one polynomial with $k\geq 1$ variables $t_i$ to 
 be specialized.

 \begin{theorem} \label{thm:dedekind-HS-k} Let $Z$ be a Dedekind domain and a Hilbertian ring. Let $P(\underline t,\underline y)$ be a po\-ly\-no\-mial, irreducible in $Q[\underline t,\underline y]$, of degree $\geq 1$ in $\underline y$ (with $k,n\geq 1$).  Assume that $P$ has no fixed \underbar{prime} divisor \hbox{w.r.t.} $\underline t$.
There is a Zariski-dense subset $H\subset Z^k$ such that for every $\underline m \in H$,  the polynomial $P(\underline m,\underline y)$ is irreducible in $Q\hskip 1pt [\underline y\hskip 1pt]$ and has no \underbar{prime} divisor in $Z$.
\end{theorem}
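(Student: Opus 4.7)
The plan is to transpose the strategy of \S\ref{ssec:final_k bigger} (for near UFDs) to the Dedekind setting: reduce Theorem~\ref{thm:dedekind-HS-k} to an iteration lemma analogous to Lemma~\ref{lem:reduction} but tailored to Dedekind domains and a \emph{single} polynomial ($s=1$). The lemma to prove is: there exist $\alpha, \omega \in Z$ with $\omega \neq 0$ such that, for infinitely many $m_1$ in the arithmetic progression $\tau = (\omega \ell + \alpha)_{\ell \in Z}$, the polynomial $P(m_1, t_2, \ldots, t_k, \underline y)$ is irreducible in $Q[t_2, \ldots, t_k, \underline y]$, of degree $\geq 1$ in $\underline y$, and has no fixed prime divisor in $Z$ w.r.t.\ $\underline t' = (t_2, \ldots, t_k)$. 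Iterating this lemma $k$ times, while discarding at each step the finitely many values killing a prescribed nonzero polynomial, produces the required Zariski-dense set $H \subset Z^k$.

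For the iteration lemma, view $P$ as a polynomial in $(\underline t', \underline y)$ with coefficients $P_\mu(t_1) \in Z[t_1]$ (one may assume $\deg_{t_1}(P) \geq 1$, else reduce to smaller $k$); these $P_\mu$ are coprime in $Q[t_1]$ by irreducibility of $P$. Attach to them the nonzero $\delta \in Z$ from \S\ref{ssec:preliminaries}, factor $\delta Z = \prod_{i=1}^r \mathcal{Q}_i^{e_i}$, and let $\mathcal{P}$ be the finite (by Lemma~\ref{lem:cond-star}) set of principal prime ideals $pZ$ with $|Z/pZ| \leq \max_{2 \leq h \leq k} \deg_{t_h}(P)$ or $p \mid \delta$. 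For each $pZ \in \mathcal{P}$ the no-fixed-prime-divisor hypothesis on $P$ yields $\underline u_p = (u_{p,1}, \ldots, u_{p,k}) \in Z^k$ with $P(\underline u_p, \underline y) \not\equiv 0 \pmod{p}$, forcing $P_\mu(u_{p,1}) \notin pZ$ for some $\mu$; and for each non-principal $\mathcal{Q}_i \mid \delta Z$ the CRT argument of \S\ref{ss:ded} delivers $m_i \in Z$ satisfying condition (*) of Proposition~\ref{rem:plus} at $\mathcal{Q}_i$. Lemma~\ref{lemma:3} then yields $\alpha \in Z$ simultaneously congruent to $u_{p,1}$ modulo each principal $pZ \in \mathcal{P}$ and to $m_i$ modulo each non-principal $\mathcal{Q}_i^{g_i+1}$ (these ideals being pairwise comaximal). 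Set $\omega = \delta \cdot \prod_{pZ \in \mathcal{P}} p$.

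Along $\tau = (\alpha + \omega \ell)_{\ell \in Z}$, Proposition~\ref{rem:plus}(b) — whose hypothesis follows from the no-fixed-prime-divisor condition on $P$ and whose condition (*) is arranged at every $\mathcal{Q}_i$ by the construction of $\alpha$ — ensures that $P(m_1, \underline t', \underline y)$ has no prime of $Z$ dividing all of its coefficients. The polynomial $P(\alpha + \omega t_1, \underline t', \underline y)$ is irreducible in $Q[t_1, \underline t', \underline y]$, so by the Hilbertian property of $Z$ infinitely many $m_1 \in \tau$ render $P(m_1, \underline t', \underline y)$ irreducible in $Q[\underline t', \underline y]$. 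Absence of a fixed prime divisor w.r.t.\ $\underline t'$ is then checked prime by prime: for $pZ \in \mathcal{P}$ the tuple $(u_{p,2}, \ldots, u_{p,k})$ is an explicit witness, since $m_1 \equiv u_{p,1} \pmod{p}$; for $pZ \notin \mathcal{P}$ one has $|Z/pZ| > \max_h \deg_{t_h}(P)$ and $P(m_1, \underline t', \underline y) \not\equiv 0 \pmod{p}$, so Lemma~\ref{lem:prelim-proof-R[u]}(a) applies.

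The main obstacle I anticipate is reconciling, at principal primes $pZ$ that both have small residue field and divide $\delta$, the ``small-residue'' constraint $\alpha \equiv u_{p,1} \pmod{p}$ with the Dedekind-primitive-specialization constraint (*) at $\mathcal{Q}_i = pZ$. These constraints do not collide: $P_\mu(u_{p,1}) \notin pZ$ forces $P_\mu(\alpha) \notin pZ \supset p^{g_i+1}Z$, so the witness constraint already implies (*) at a principal $\mathcal{Q}_i$. The genuinely independent CRT conditions thus end up on pairwise comaximal ideals — principal $pZ$ for $pZ \in \mathcal{P}$ on one side and $\mathcal{Q}_i^{g_i+1}$ for non-principal $\mathcal{Q}_i$ on the other — and a single application of Lemma~\ref{lemma:3} delivers $\alpha$.
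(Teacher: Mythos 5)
Your proposal follows essentially the same route as the paper's own proof: the same reduction to the $s=1$ iteration lemma (${\mathcal HilSch}^{\displaystyle \star}(k,n,1)$ adapted to Dedekind domains), the same finite set of small/principal bad primes combined with the conditions at $\mathcal{Q}_i^{g_i+1}$ for the non-principal primes of $\delta Z$, an application of Proposition \ref{rem:plus}(b) along the resulting arithmetic progression, Hilbertianity for irreducibility, and the witness-plus-norm-bound verification (via Lemma \ref{lem:prelim-proof-R[u]}(a)) that no fixed prime divisor survives in the remaining variables. The only harmless deviations are that you invoke Lemma \ref{lemma:3} where plain CRT on pairwise comaximal ideals is what is actually used (the $\mathcal{Q}_i^{g_i+1}$ are not prime, but you only prescribe congruences there), and your choice $\omega=\delta\cdot\prod_{pZ\in\mathcal{P}}p$ makes the ``multiple of $\delta$'' hypothesis of Proposition \ref{rem:plus} visibly satisfied.
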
  
 
The cost of the generalization to $k\geq 1$ is that the primitivity \hbox{w.r.t.} $Z$ of the polynomial $P(\underline m,\underline y)$ in the conclusion 
is replaced by the non-divisibility by any prime $p\in Z$. On the other hand the assumption is weaker: 
$P(\underline t,\underline y)$ is not assumed to be primitive \hbox{w.r.t.} $Z$ and the fixed divisor assumption is restricted to primes.

 \begin{proof}
 As before, one may assume that $P$ is not a monomial in $\underline y$. The statement clearly follows from successive applications of property ${\mathcal HilSch}^{\displaystyle \star}(k,n,1)$ from Lemma \ref{lem:reduction}
  to each of the variables $t_1,\ldots,t_k$. In Lemma \ref{lem:reduction}, this property was proved in the case that $Z$ is a near UFD. We prove it below in the case that $Z$ is a Dedekind domain (and a Hilbertian ring as also 
  assumed in Lemma \ref{lem:reduction}). The strategy is the same.
    
Recall that here $s=1$. Let $P_1(t_1),\ldots,P_N(t_1)\in Z[t_1]$ be the coefficients of $P$ viewed as a polynomial in $\underline t^\prime, \underline y$, where $\underline t^\prime = (t_2,\ldots, t_k)$. They are coprime in $Q[t_1]$ (a consequence of $P$ being irreducible in $Q[\underline t,\underline y]$). Let $\delta\in Z$ be the associated parameter from \S \ref{ssec:preliminaries} and let $\mathcal{Q}_1,\ldots, \mathcal{Q}_r$ be the prime ideals of $Z$ dividing $\delta$. One may assume that the first ones, say $\mathcal{Q}_1,\ldots, \mathcal{Q}_\rho$, are principal, generated by prime elements $q_1,\ldots,q_\rho$ respectively, while the last ones $\mathcal{Q}_{\rho+1},\ldots, \mathcal{Q}_r$ are not principal. 

Let ${\mathcal P}$ be the union of  the set $\{q_1,\ldots,q_\rho\}$ and of the set of primes $p\in Z$  such that $|Z/pZ| \leq \max_{2\leq h \leq k} \deg_{t_h}(P)$. 
From Lemma \ref{lem:cond-star},
the set ${\mathcal P}$ is finite. Let $\omega$ be the product of all primes in  ${\mathcal P}$.

 Let $\mathfrak{I}$ be the ideal of $Z$ generated by all values $P_1(z),\dots, P_N(z)$ with $z\in Z$.
 Denote by $g_1,\ldots,g_r\geq 0$ the respective exponents of $\mathcal{Q}_1,\ldots, \mathcal{Q}_r$ in the prime ideal factorization of  $\mathfrak{I}$.

 As $P$ has no fixed {prime} divisor in $Z$ \hbox{w.r.t.} $\underline t$, for every $p\in {\mathcal P}$,
there is a $k$-tuple $\underline u_p= (u_{p1}, \ldots, u_{pk}) \in Z^k$ such that 

\vskip 0,5mm

\centerline{$P(\underline u_p,\underline y)\not\equiv 0 \pmod{p}$.}
\vskip 0,5mm

\noindent 
Consider next the ideals $\mathcal{Q}_{\rho+1}^{g_{\rho+1}+1}, \ldots, \mathcal{Q}_{r}^{g_{r}+1}$. As none of these ideals contains $\mathfrak{I}$, for each $j=\rho+1,\ldots,r$, there exists $i_j\in \{1,\dots, N\}$ and $u_{j1}\in Z$ such that $P_{i_j}(u_{j1})\not\equiv 0 \pmod{\mathcal{Q}_j^{g_j+1}}$, 
or equivalently,
\vskip 0,5mm

\centerline{$(P_1(u_{j1}),\ldots,P_N(u_{j1})) \not\equiv (0,\ldots,0) \pmod {\mathcal{Q}_j^{g_j+1}}$,}
\vskip 0,5mm

\noindent
or, again equivalently,
\vskip 0,5mm

\centerline{$P(u_{j1}, \underline t^\prime, \underline y) \not\equiv 0 \pmod {\mathcal{Q}_j^{g_j+1}}$.}
\vskip 0,5mm

Any two ideals in the set ${\mathcal P} \cup \{\mathcal{Q}_{\rho+1}^{g_{\rho+1}},\ldots, \mathcal{Q}_r^{g_\rho}\}$ are comaximal (or this set is empty or a sin\-gle\-ton). The Chinese Remainder Theorem gives a tuple $\underline u= (u_{1},\ldots,u_{k}) \in Z^k$ \hbox{such that}
\vskip 1mm

\noindent
(1) (a) \hskip 30mm $P(\underline u,\underline y) \not\equiv 0 \pmod{p}$, \hskip 1mm  for each $p\in {\mathcal P}$,

\hskip 1mm (b) \hskip 20mm and  $P(u_1,\underline t^\prime,\underline y) \not\equiv 0 \pmod{\mathcal{Q}_j^{g_j+1}}$, \hskip 1mm for each  $j=\rho+1,\ldots, r$.

\vskip 1mm

Note that condition (1-a) implies that $P(u_1,\underline t^\prime,\underline y) \not\equiv 0 \pmod{q_j^{g_j+1}}$, \hskip 1mm for each  $j=1,\ldots, \rho$. So condition (1-b) in fact holds for each $j=1,\ldots,r$. Note further that the fixed prime divisor assumption \hbox{w.r.t.} $\underline t$ implies that $P$ has no fixed prime divisor \hbox{w.r.t.} the variable $t_1$. Set $\alpha = u_1$. Proposition \ref{rem:plus}(b) can be applied to conclude that

\vskip 1mm

\noindent
(2) \ the polynomial $P(\omega \ell_1+\alpha, \underline t^\prime, \underline y)$ has no prime divisors $p\in Z$ for every $\ell_1\in Z$.
 \vskip 1mm

The end of the proof of Lemma \ref{lem:reduction} can now be reproduced {\it mutatis mutandi}.
Consider the following polynomial, where $v_1$ is a new variable:
\vskip 1,5mm

\centerline{$\widetilde P (v_1,\underline t^\prime, \underline y)= P(\omega v_1+\alpha, \underline t^\prime, \underline y) \in Z[v_1,\underline t^\prime,\underline y]$.}
\vskip 1mm

\noindent
The polynomial $\widetilde P$ is irreducible in $Q[\underline t^\prime][v_1,\underline y]$ and of degree at least $1$ in $\underline y$. As $Z$ is a Hilbertian ring, there exist infinitely many $\ell_1\in Z$ such that 

\vskip 1mm

\noindent
(3) the polynomial $\widetilde P(\ell_1,\underline t^\prime,\underline y)$ is irreducible in $Q[\underline t^\prime,\underline y]$, of degree $\geq 1$ in $\underline y$.
\vskip 1,5mm

Fix $\ell_1\in Z$ as in (3). It remains to check that $\widetilde P(\ell_1,\underline t^\prime,\underline y)$ has no fixed prime divisor
\hbox{w.r.t.} $\underline t^\prime$.
Assume that for some prime $p\in Z$, we have $\widetilde P(\ell_1, \underline m^\prime, \underline y) \equiv 0 \pmod{p}$ for every $\underline m^\prime \in Z^{k-1}$. Note that due to (1-a), we have
\vskip 1,5mm

\noindent
\hskip 28mm $\widetilde P(\ell_1, u_2, \ldots, u_k,\underline y) \not\equiv 0 \pmod{p}$, for every $p\in {\mathcal P}$. 
\vskip 1,5mm

\noindent
Therefore $p\notin {\mathcal P}$. Hence, by choice of ${\mathcal P}$, 
\vskip 1mm

\noindent
(4) \hskip 20mm $\displaystyle |Z/pZ| > \max_{2\leq h \leq k} \deg_{t_h}(P)  \geq  \max_{2\leq h \leq k} \deg_{t_h} \left(\widetilde P(\ell_1, \underline t^\prime, \underline y)\right)$
\vskip 1mm

\noindent
As, by (2), $\widetilde P(\ell_1, \underline t^\prime, \underline y)$ 
is nonzero modulo $p$, this contradicts Lemma \ref{lem:prelim-proof-R[u]}(a).
 \end{proof}

\bibliographystyle{alpha}
\bibliography{Debes-et-al_11590.bib}
\end{document}